\documentclass[11pt]{amsart}
\usepackage{amsmath,amstext,amssymb,amsopn,amsthm,mathrsfs}
\usepackage{epsfig,graphics,latexsym,graphicx}
\usepackage[svgnames]{xcolor}
\allowdisplaybreaks

\usepackage{textcase} % To use with \MakeTextLowercase and \MakeTextUppercase

\newtheorem{theorem}{Theorem}[section]
\newtheorem{lemma}[theorem]{Lemma}
\newtheorem{corollary}[theorem]{Corollary}
\newtheorem{proposition}[theorem]{Proposition}
\newtheorem{definition}[theorem]{Definition}
\newtheorem{remark}[theorem]{Remark}       

\numberwithin{equation}{section}

\def\nc{\newcommand}

\nc\m[1]{\left| #1\right|}
\nc\norm[1]{\left\| #1\right\|}
\hyphenation{super-harmon-ic}

%%%%%%%%%%%%%%%%%%%%%%%%%%%%%%%%%%%%%%%%%%%%%%%%
%\usepackage[notcite,notref]{showkeys} % to show labels
%\usepackage{refcheck}%To check unused bibliography entries.
%%%%%%%%%%%%%%%%%%%%%%%%%%%%%%%%%%%%%%%%%%%%%%%%

\nc\CC{\mathbb{C}}
\nc\RR{\mathbb{R}}
\nc\QQ{\mathbb{Q}}
\nc\ZZ{\mathbb{Z}}
\nc\NN{\mathbb{N}}

\begin{document}
\title[]{Some capacitary strong type inequalities and related function spaces}
\date{}

\author[Keng Hao Ooi]
{Keng Hao Ooi\textsuperscript{1,2}}
\address{\textsuperscript{1}Department of Mathematics,
National Central University,
No.300, Jhongda Rd., Jhongli City, Taoyuan County 32001, Taiwan (R.O.C.).}
\address{\textsuperscript{2}Department of Mathematics, National Taiwan Normal University, 88, Sec.4, Ting-Chou Road, Taipei, 116059, Taiwan (R.O.C.)}
\email{kooi1@math.ncu.edu.tw}

\author[Nguyen Cong Phuc]
{Nguyen Cong Phuc\textsuperscript{3}\\
\\
\textit{I\MakeTextLowercase{n} \MakeTextLowercase{memory of} 
	P\MakeTextLowercase{rofessor} D\MakeTextLowercase{avid} R. A\MakeTextLowercase{dams}}
}

\address{\textsuperscript{3}Department of Mathematics,
Louisiana State University,
303 Lockett Hall, Baton Rouge, LA 70803, USA.}
\email{pcnguyen@math.lsu.edu}

\maketitle
%{\centering\footnotesize {\it In memory of Professor David R. Adams} \par}

\begin{abstract} We verify a conjecture of D. R. Adams on a capacitary strong type inequality that generalizes the classical capacitary strong type inequality of V. G. Maz'ya.
	As a result, we characterize related function spaces as   K\"othe duals to a class of  Sobolev multiplier type spaces.  
Moreover, 	using tools from nonlinear potential theory, weighted norm inequalities, and  Banach function space theory, we show that these spaces are also isomorphic to
more concrete spaces that are easy to use and  fit in well with the modern  theory of function spaces of harmonic analysis.
\\
\\
Keywords: capacitary strong type inequality, Riesz's potential, Bessel's potential,  Wolff's potential, trace inequality, Sobolev multiplier type space.

\end{abstract}

\section{Introduction}%\label{Introduction}

Let $n$ be a positive integer.  For $\alpha\in (0,n)$ and  $1<s<\frac{n}{\alpha}$,  the space of Riesz potentials  $\dot{H}^{\alpha, s}=\dot{H}^{\alpha, s}(\mathbb{R}^n)$, is defined  
as the completion of $C_c^\infty(\mathbb{R}^n)$ with respect to the norm 
$$\|u\|_{\dot{H}^{\alpha,s}}=\|(-\Delta)^{\frac{\alpha}{2}}u \|_{L^s(\mathbb{R}^n)}=\|(-\Delta)^{\frac{\alpha}{2}}u \|_{L^s(\mathbb{R}^n)}.$$ 
Here the operator $(-\Delta)^{\frac{\alpha}{2}}$ is understood as
$(-\Delta)^{\frac{\alpha}{2}}:=\mathcal{F}^{-1}(|\xi|^\alpha)\mathcal{F},$ 
where $\mathcal{F}$ is the Fourier transform in $\mathbb{R}^n$. 

It is known that (see \cite{MH}) $\dot{H}^{\alpha,s}$ consists of functions of the form $u=  I_{\alpha}f$ for some $f\in L^s(\mathbb{R}^n)$, and 
$\|u\|_{\dot{H}^{\alpha, s}}=\|f\|_{L^s(\mathbb{R}^n)}.$ Here,   for $\alpha\in (0,n)$ and $x\in\mathbb{R}^n$,
$$I_{\alpha}f(x) := \gamma(n,\alpha) \int_{\mathbb{R}^n} |x-y|^{\alpha-n}  f(y) dy, $$
 is the Riesz potential of $f$ of order $\alpha$, and   
$\gamma(n, \alpha)=\Gamma(\tfrac{n-\alpha}{2})/[\pi^{n/2} 2^\alpha \Gamma(\tfrac{\alpha}{2})].$ Note that the Riesz kernel  $\gamma(n,\alpha) |x|^{\alpha-n}$ is 
 the inverse Fourier transform of $|\xi|^{-\alpha}$ (in the distributional sense).

In the case  $\alpha =k\in \mathbb{N}$ and $1<s< \frac{n}{k}$  we have $\dot{H}^{k,s} \approx \dot{W}^{k,s}$, where $\dot{W}^{k,s}=\dot{W}^{k,s}(\mathbb{R}^n)$ is the homogeneous Sobolev space defined as the completion of $C_c^\infty(\mathbb{R}^n)$ under the norm 
\begin{align*}
\|u\|_{\dot{W}^{k,s}}:=\sum_{|\beta|= k}\|D^{\beta}u\|_{L^{s}(\mathbb{R}^n)}.
\end{align*}

The capacity associated to $\dot{H}^{\alpha,s}$ is the Riesz capacity defined for any set $E\subset\mathbb{R}^n$ by 
\begin{equation*}
{\rm cap}_{\alpha, \,s}(E):=\inf\Big\{\|f\|_{L^{s}(\mathbb{R}^n)}^{s}: f\geq0, I_{\alpha}f\geq 1 {\rm ~on~} E \Big\}.
\end{equation*}

The Choquet integral associated to ${\rm cap}_{\alpha,s}$ of  a function $g:\mathbb{R}^n \rightarrow [0,\infty]$  is defined   by
\begin{equation*}%\label{ChoI}
\int_{\mathbb{R}^n} g d {\rm cap}_{\alpha, s} :=\int_{0}^{\infty}{\rm cap}_{\alpha,s}(\{x\in\mathbb{R}^n: g(x)>t\})dt.
\end{equation*}
This is well defined as long as $g$ is defined quasi-everywhere (q.e.)  with respect to ${\rm cap}_{\alpha, s}$, 
 i.e., $g$ is defined except for a set of zero capacity. With this,  we can define the  Choquet $L^q$ quasi-norm of any ${\rm cap}_{\alpha, s}$ q.e. defined function $u:\mathbb{R}^n \rightarrow [-\infty,\infty]$ by 
$$\|u\|_{L^q({\rm cap}_{\alpha,s})}:= \left(\int_{\mathbb{R}^n} |u|^q d{\rm cap}_{\alpha,s} \right)^{\frac{1}{q}},  \quad q> 0.$$

By definition, a function $u$ is said to be quasi-continuous
with respect to  ${\rm cap}_{\alpha, s}$ if for any $\epsilon>0$ there exists an open set $G$ such that ${\rm cap}_{\alpha,s}(G)<\epsilon$ and $u$ is continuous on $G^c:=\mathbb{R}^n\setminus G$. 
For any $q>0$, the space $L^q({\rm cap}_{\alpha,s})$ and its subspace $\{ f\in L^q({\rm cap}_{\alpha,s}): f {\rm ~ is ~quasicontinuous~w.r.t.~}  {\rm cap}_{\alpha, s}\}$ are both complete quasi-norm spaces. This can be proved as in the proof of \cite[Proposition 2.3]{OP1}. Moreover, they are normable in the case $q\geq1$ (see \cite{OP1} and Theorem \ref{Fnorm} below).

In most applications, the function $u$ under discussion  is given as the potential $u={I}_\alpha  f$ of a function $f$.
One of the fundamental results  in this study is the following capacitary strong type inequality discovered originally by Maz'ya in the early 1960s and subsequently extended by 
Adams and  Dahlberg (see, e.g., \cite{MSh, AH}):
\begin{equation}\label{CSIM}
\int_{\mathbb{R}^n} (I_\alpha  f)^s d{\rm cap}_{\alpha, s} \leq  A \int_{\mathbb{R}^n} f^s dx
\end{equation}
for all nonnegative  $f\in L^s(\mathbb{R}^n)$. The significance of \eqref{CSIM} is that it provides a useful way to control the unwieldy quantity $\|  I_{\alpha} f \|_{L^s({\rm cap}_{\alpha,s})}$. 
In fact, \eqref{CSIM} yields the following important characterization of trace inequalities: Given a nonnegative measure $\mu$, the trace inequality 
\begin{equation*}
\int_{\mathbb{R}^n}  (I_\alpha f)^{s} d\mu \leq C \int_{\mathbb{R}^n} f^{s} dx  
\end{equation*}
holds for all nonnegative  $f\in L^s(\mathbb{R}^n)$ if and only if  $\mu$ satisfies the capacitary condition 
\begin{equation}\label{tr2}
\mu(K)\leq C {\rm cap}_{\alpha,s}(K)
\end{equation}
for all compact sets $K\subset\mathbb{R}^n$.

 After \eqref{CSIM}, due to its relevant to PDEs, spectral theory, and the theory of function spaces,   there have been  interests in controlling the Choquet quasi-norm
$\| I_{\alpha} f  \|_{L^q({\rm cap}_{\alpha,s})}$. In \cite[p. 22]{Ada}, D. R. Adams  conjectured the following bound for this quantity:
\begin{equation}\label{capstrong2}
\int_{\mathbb{R}^n} (I_\alpha  f)^q d{\rm cap}_{\alpha, s} \leq  A \int_{\mathbb{R}^n} f^s (I_\alpha  f)^{q-s}  dx 
\end{equation}
at least for nonnegative functions $f\in L^\infty(\RR^n)$ with compact support and $q\geq 1$. The case $q=s$ goes back to \eqref{CSIM}, and  the  \emph{integer} case where $\alpha \in (0,n)\cap \mathbb{N}$ was obtained by  Adams in the same paper but only in the range
$1\leq q<s+\frac{n}{n-\alpha}$. Most recently, the case $q=1$ and all real $\alpha\in(0,n)$ was settled in  \cite{OP2}.

In this paper, we work out the bound \eqref{capstrong2} for all real $\alpha\in(0,n)$ and all $q\in [1,\infty)$. In fact, our proof here is new even in the case $q=1$.

\begin{theorem}\label{Main1} Let $s>1$, $0<\alpha<\frac{n}{s}$, and $q\in [1,\infty)$. There exists a constant $A>0$ such that 
	\eqref{capstrong2} holds for all nonnegative functions $f\in L^\infty(\RR^n)$ with compact support, or more generally for all nonnegative measurable functions $f$ such that $I_\alpha f \in L^q( {\rm cap}_{\alpha, s})$.
\end{theorem}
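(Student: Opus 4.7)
The plan is to reduce \eqref{capstrong2} to Maz'ya's classical capacitary strong-type inequality \eqref{CSIM} via a pointwise comparison of Riesz potentials. Writing $v := I_\alpha f$, I would introduce the auxiliary function
\[
\phi := f \cdot v^{(q-s)/s},
\]
chosen so that $\|\phi\|_{L^s(\RR^n)}^s = \int f^s v^{q-s}\,dx$ is exactly the right-hand side of \eqref{capstrong2}. A direct application of \eqref{CSIM} to $\phi$ gives
\[
\int_{\RR^n} (I_\alpha \phi)^s \, d{\rm cap}_{\alpha,s} \leq A \int_{\RR^n} f^s\, v^{q-s}\, dx,
\]
so it would suffice to establish the pointwise bound $v(x)^{q/s} \leq C\, I_\alpha \phi(x)$, call it $(\star)$, valid ${\rm cap}_{\alpha,s}$-quasi-everywhere; raising $(\star)$ to the $s$-th power and integrating against $d{\rm cap}_{\alpha,s}$ then yields \eqref{capstrong2}.

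For the range $q \geq s$, writing $\beta := (q-s)/s \geq 0$, the bound $(\star)$ is equivalent to $v(x)^{\beta+1} \leq C\, I_\alpha(f v^\beta)(x)$, and I would prove it by a density argument on the ``averaging'' probability measure $d\nu_x(y) := v(x)^{-1}|x-y|^{\alpha-n}f(y)\,dy$. Pick $R_x>0$ so that $\int_{|x-y|\geq R_x}|x-y|^{\alpha-n}f(y)\,dy = v(x)/2$, i.e.\ half the $\nu_x$-mass lies inside $B(x,R_x)$. For $y$ with $|x-y|\leq R_x$ and $z$ with $|x-z|\geq |x-y|$, the triangle inequality gives $|y-z|\leq 2|x-z|$, hence $|y-z|^{\alpha-n}\geq 2^{\alpha-n}|x-z|^{\alpha-n}$, from which
\[
v(y) \geq \int_{|x-z|\geq R_x} |y-z|^{\alpha-n} f(z)\,dz \geq 2^{\alpha-n-1}\, v(x).
\]
Thus $v^\beta \geq c_\beta v(x)^\beta$ on a subset of $\nu_x$-measure at least $1/2$, so $\int v^\beta\,d\nu_x \geq c'_\beta v(x)^\beta$, which after multiplying by $v(x)$ is precisely $(\star)$.

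The genuine difficulty, and the main obstacle, will be the range $1 \leq q < s$. Here $(\star)$ already fails: if $f$ approximates a Dirac mass at $0$, then $v$ blows up near $0$ while $\phi = f v^{(q-s)/s}$ carries a \emph{negative} power of $v$ and becomes uniformly small on $\mathrm{supp}(f)$, so $I_\alpha\phi$ cannot dominate $v^{q/s}$. My plan for this range is to abandon the pointwise reduction and argue by a dyadic layer-cake decomposition: with $E_k := \{v > 2^k\}$, one has $\int v^q\, d{\rm cap}_{\alpha,s} \simeq \sum_k 2^{kq}\,{\rm cap}_{\alpha,s}(E_k)$ and $\int f^s v^{q-s}\,dx \simeq \sum_k 2^{k(q-s)} \int_{E_{k-1}\setminus E_k} f^s\,dx$, so the inequality reduces to controlling each ${\rm cap}_{\alpha,s}(E_k)$ by the ``correct'' local quantity. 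The naive weak-type bound ${\rm cap}_{\alpha,s}(E_k)\leq 2^{-ks}\|f\|_s^s$ is too crude, and log-convexity of the two sides in $q$ does not allow one to interpolate between \cite{OP2} (the case $q=1$) and \eqref{CSIM} (the case $q=s$). Instead one must import the nonlinear potential-theoretic tools (Wolff potentials, equilibrium measures, truncations $(v-2^k)_+$) together with Maz'ya's inequality applied to carefully chosen pieces of $f$, extending and unifying the argument of \cite{OP2}; packaging everything into a single bound uniform in $k$ is where the main work will lie.
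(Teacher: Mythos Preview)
Your argument for $q\geq s$ is correct and is exactly the route the paper mentions in a remark after its proof: with $t=q/s\geq 1$, your inequality $(\star)$ is precisely the ``integration by parts'' lemma $(I_\alpha f)^t\leq C\,I_\alpha[f(I_\alpha f)^{t-1}]$ (your half-mass proof rediscovers it), and then \eqref{CSIM} applied to $\phi=f\,v^{(q-s)/s}$ finishes.

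For $1\leq q<s$, however, you do not have a proof, only a plan whose core step (``package everything into a single bound uniform in $k$'') is left unspecified. The key idea you are missing, and the one the paper uses to treat all $q\geq 1$ uniformly, is to apply the integration-by-parts lemma with exponent $t=q$ rather than $t=q/s$. This gives
\[
(I_\alpha f)^q \leq C\, I_\alpha\bigl[f(I_\alpha f)^{q-1}\bigr],
\]
which is valid precisely because $q\geq 1$. One then cannot simply plug $f(I_\alpha f)^{q-1}$ into \eqref{CSIM} (it need not lie in $L^s$), so the paper integrates against $d{\rm cap}_{\alpha,s}$ via duality: by Hahn--Banach and the Maz'ya--Verbitsky bound $\|I_\alpha|\mu|\|_{\dot M^{\alpha,s}_{s',s}}\lesssim\|\mu\|_{\dot{\mathfrak M}^{\alpha,s}}$, one obtains
\[
\int (I_\alpha f)^q\,d{\rm cap}_{\alpha,s}\ \lesssim\ \bigl\|f(I_\alpha f)^{q-1}\bigr\|_{(\dot M^{\alpha,s}_{s',s})'}.
\]
A H\"older argument (split $f(I_\alpha f)^{q-1}\cdot g$ as $[f(I_\alpha f)^{(q-s)/s}]\cdot[g(I_\alpha f)^{q/s'}]$) then bounds this K\"othe-dual norm by
\[
\Bigl(\int f^s(I_\alpha f)^{q-s}\,dx\Bigr)^{1/s}\Bigl(\int (I_\alpha f)^q\,d{\rm cap}_{\alpha,s}\Bigr)^{1/s'},
\]
and the hypothesis $I_\alpha f\in L^q({\rm cap}_{\alpha,s})$ lets one absorb the second factor. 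Your dyadic layer-cake scheme would have to reproduce this absorption structure, and there is no indication in your proposal of how that would occur.
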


We observe that Theorem \ref{Main1} and monotone convergence theorem imply that for $q\geq s$ inequality \eqref{capstrong2} also holds for all nonnegative measurable functions $f$ such that 
\begin{align*}
\int_{\mathbb{R}^n} f^s (I_\alpha  f)^{q-s}  dx<\infty 
\end{align*}
(and there is nothing to prove in the case $\int_{\mathbb{R}^n} f^s (I_\alpha  f)^{q-s}  dx=\infty$).

However, in the case $1\leq q<s$, inequality \eqref{capstrong2} fails for nonnegative measurable functions $f$ that do not decay at infinity such as  $f(x)=e^{|x|^2}$ (provided we interpret $\frac{1}{\infty}=0$). Thus, Theorem \ref{Main1} motivates the search for a large and natural class  of measurable functions $f\geq0$ for which $I_\alpha f \in L^q( {\rm cap}_{\alpha, s})$ so that 
\eqref{capstrong2}  holds for such $f$.  For this purpose, we introduce the following space.
Let $s>1$,   $0<\alpha<\frac{n}{s}$, and $1\leq q<s$. By a weight we mean a nonnegative and locally integrable function in $\RR^n$. We define $\dot{\widetilde{O}}^{\alpha,s}_{q}=\dot{\widetilde{O}}^{\alpha,s}_{q}(\mathbb{R}^n)$ as the space of all measurable functions $g$ in
$\mathbb{R}^n$ such that there exists a  q.e. defined weight $w\in L^q({\rm cap}_{\alpha,s})$ with $\|w\|_{L^q({\rm cap}_{\alpha,s})}\leq 1$   such that    
$$\left(\int_{\mathbb{R}^n} |g|^{s} w^{q-s} dx   \right)^{\frac1s}<+\infty.$$
The integral above 
is understood as $\int_{\{g\not=0\}} |g|^{s} w^{q-s} dx$.
Here the dot indicates that this space is associated to the space of Riesz potentials  $\dot{H}^{\alpha, s}$. 
The space $\dot{\widetilde{O}}^{\alpha,s}_{q}$ will be equipped with the following quasi-norm
$$\|g\|_{\dot{\widetilde{O}}^{\alpha,s}_{q}} := \inf_{w}\left(\int_{\mathbb{R}^n} |g|^{s} w^{q-s} dx   \right)^{\frac1s}, \qquad g\in \dot{\widetilde{O}}^{\alpha,s}_{q},$$
where the infimum is taken over all q.e. defined weights $w$ such that  $\|w\|_{L^q({\rm cap}_{\alpha,s})}\leq 1$.

\begin{theorem}\label{Main2} Let $\alpha\in (0,n)$, $0<s<\frac{n}{\alpha}$, and $q\in [1,s)$. There exists a constant $B>0$ such that the bound
	\begin{equation*}
	\int_{\mathbb{R}^n} (I_\alpha  f)^q d{\rm cap}_{\alpha, s} \leq  B \|f\|^{q}_{\dot{\widetilde{O}}^{\alpha,s}_{q}} 
	\end{equation*}
	holds for all nonnegative functions $f\in \dot{\widetilde{O}}^{\alpha,s}_{q}$. Thus \eqref{capstrong2} holds for all nonnegative functions $f\in \dot{\widetilde{O}}^{\alpha,s}_{q}$.  
\end{theorem}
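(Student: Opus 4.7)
The plan is to combine Theorem \ref{Main1} with a H\"older factorization depending on an admissible weight $w$, then reduce to a capacitary trace estimate from nonlinear potential theory.

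First I would reduce to the case of bounded, compactly supported $f$ by truncation. Setting $f_k = \min(f,k)\chi_{B(0,k)}$, any weight $w$ admissible for $f$ in the definition of $\|f\|_{\dot{\widetilde{O}}^{\alpha,s}_{q}}$ is also admissible for $f_k$ with $\int f_k^s w^{q-s}\,dx \leq \int f^s w^{q-s}\,dx$, so $\|f_k\|_{\dot{\widetilde{O}}^{\alpha,s}_{q}} \leq \|f\|_{\dot{\widetilde{O}}^{\alpha,s}_{q}}$, and Theorem \ref{Main1} guarantees $I_\alpha f_k \in L^q({\rm cap}_{\alpha,s})$ for each $k$. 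Establishing the desired bound uniformly in $k$ then suffices, since $I_\alpha f_k \uparrow I_\alpha f$ and the Choquet integral against the countably subadditive set function ${\rm cap}_{\alpha,s}$ enjoys monotone convergence along increasing sequences.

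For such $f_k$, Theorem \ref{Main1} yields
\[
\int_{\mathbb{R}^n} (I_\alpha f_k)^q\, d{\rm cap}_{\alpha,s} \leq A \int_{\mathbb{R}^n} f_k^s (I_\alpha f_k)^{q-s}\, dx.
\]
Fixing an arbitrary admissible $w$, I apply H\"older's inequality based on the pointwise identity
\[
f_k^s (I_\alpha f_k)^{q-s} = \bigl(f_k^s w^{q-s}\bigr)^{q/s}\cdot\bigl(f_k^s w^q (I_\alpha f_k)^{-s}\bigr)^{(s-q)/s}
\]
with conjugate exponents $s/q$ and $s/(s-q)$ (both larger than $1$ since $q<s$), obtaining
\[
\int f_k^s (I_\alpha f_k)^{q-s}\,dx \leq \left(\int f_k^s w^{q-s}\,dx\right)^{q/s}\left(\int \frac{f_k^s w^q}{(I_\alpha f_k)^{s}}\,dx\right)^{(s-q)/s}.
\]

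The crux is the capacitary estimate $\int f_k^s w^q (I_\alpha f_k)^{-s}\,dx \leq C\int w^q\, d{\rm cap}_{\alpha,s}\leq C$ with an absolute $C$ (depending only on $n,\alpha,s$). By the layer-cake formula this is equivalent to $d\nu_k = f_k^s(I_\alpha f_k)^{-s}\,dx$ being a trace measure in the sense of \eqref{tr2}, i.e., $\nu_k(E) \leq C\,{\rm cap}_{\alpha,s}(E)$ for every Borel $E$. This is the main obstacle of the proof: the naive Jensen argument using the probability measure $d\nu_{k,x}(y) = f_k(y)|x-y|^{\alpha-n}\,dy/I_\alpha f_k(x)$ reduces matters to the pointwise estimate $I_\alpha(f_k^s/I_\alpha f_k) \lesssim f_k^{s-1}$, which fails off the support of $f_k$, so I will instead invoke nonlinear potential-theoretic tools (Wolff potentials, Hedberg--Wolff type inequalities) or apply the self-improving trace-inequality characterization recalled in the introduction after \eqref{tr2} to the self-referential weight $(f_k/I_\alpha f_k)^s$, which at the test function $h=f_k$ is already saturated. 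Granting this key estimate and noting $\|w\|_{L^q({\rm cap}_{\alpha,s})}\leq 1$, I conclude
\[
\int (I_\alpha f_k)^q\, d{\rm cap}_{\alpha,s} \leq AC^{(s-q)/s}\left(\int f_k^s w^{q-s}\,dx\right)^{q/s};
\]
taking the infimum over admissible $w$ yields the bound with $B=AC^{(s-q)/s}$ and $\|f_k\|_{\dot{\widetilde{O}}^{\alpha,s}_{q}}^q$ on the right, and passing to the limit $k\to\infty$ finishes the proof.
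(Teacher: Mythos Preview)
Your reduction via Theorem~\ref{Main1} and the H\"older factorization are correct, but there is a genuine gap at the step you yourself flag as ``the main obstacle'': the uniform trace estimate $\nu_k(E)\le C\,{\rm cap}_{\alpha,s}(E)$ for $d\nu_k=f_k^s(I_\alpha f_k)^{-s}\,dx$. You do not prove this; gesturing at ``Wolff potentials, Hedberg--Wolff type inequalities'' or a ``self-improving trace-inequality characterization'' does not constitute an argument. The Verbitsky-type result actually available (see \cite[Theorem~1.11]{Ver1}, used later in the proof of Theorem~\ref{upper-tri}) says that $d\mu/W_{\alpha,s}(\mu)^{s-1}$ satisfies the capacitary bound, but there is no pointwise comparison between $(I_\alpha f_k)^s$ and $W_{\alpha,s}(f_k^s\,dx)^{s-1}$, so this does not deliver your claim. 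Nor does the observation that the candidate trace inequality is saturated at the single test function $h=f_k$ say anything about boundedness for general $h$. Until this step is made rigorous, the argument is incomplete.

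The paper's proof bypasses this difficulty entirely by interpolating between the two endpoints $q=s$ (the classical inequality~\eqref{CSIM}) and $q=1$ (which follows from \eqref{Lqcap2} together with the characterization of $(\dot{M}^{\alpha,s}_{s',s})'$ from \cite{OP1}). One writes $f=a^{1-\theta}b^\theta$ with $\tfrac{1}{q}=\tfrac{1-\theta}{s}+\theta$, where $a$ and $b$ are $f$ times suitable powers of $w$, uses $I_\alpha f\le (I_\alpha a)^{1-\theta}(I_\alpha b)^\theta$ by H\"older in the kernel, and then applies H\"older again in the Choquet integral to land in $L^s({\rm cap}_{\alpha,s})$ and $L^1({\rm cap}_{\alpha,s})$. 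The weights are arranged so that both resulting factors collapse to powers of $\int f^s w^{q-s}\,dx$. This route never requires the measure $f_k^s(I_\alpha f_k)^{-s}\,dx$ to be a trace measure.
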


Theorem \ref{Main2} immediately yields the following result.

\begin{theorem}\label{KV-th} Let $\alpha\in (0,n)$, $0<s<\frac{n}{\alpha}$, and $q\in [1,s)$. There exists a constant $B>0$ such that
	\begin{equation*}
	\int_{\mathbb{R}^n} (I_\alpha  |f|)^q d{\rm cap}_{\alpha, s} \leq  B \|f\|_{\dot{KV}_q}^q. 
	\end{equation*}
Here $\dot{KV}_q=\dot{KV}_q^{\alpha,s}$ is a Kalton-Verbitsky space (see \cite{KV}) defined as the set of all Lebesgue measurable functions $f$ in $\RR^n$ such that    $\|f\|_{\dot{KV}_q}<+\infty$,
where 
\begin{align*}
\|f\|_{\dot{KV}_q}:=\inf\left\{\left(\int_{\mathbb{R}^{n}}h^{s}(I_{\alpha}h)^{q-s}dx\right)^{\frac{1}{q}}:h\in \dot{\widetilde{O}}_{q}^{\alpha,s},~h\geq|f|~{\rm a.e.}\right\}.
\end{align*}	
\end{theorem}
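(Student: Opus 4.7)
The plan is to reduce Theorem \ref{KV-th} directly to Theorems \ref{Main1} and \ref{Main2} by exploiting the monotonicity of both $I_\alpha$ and the Choquet integral, combined with an infimum argument already built into the definition of $\|\cdot\|_{\dot{KV}_q}$.

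If $\|f\|_{\dot{KV}_q}=+\infty$ there is nothing to prove, so I assume the norm is finite and fix an admissible nonnegative $h\in \dot{\widetilde{O}}_q^{\alpha,s}$ with $h\geq |f|$ a.e. The nonnegativity of the Riesz kernel gives the pointwise bound $I_\alpha |f|(x)\leq I_\alpha h(x)$ on $\mathbb{R}^n$, and since the Choquet integral is monotone in its integrand one obtains
$$\int_{\mathbb{R}^n}(I_\alpha |f|)^q\,d{\rm cap}_{\alpha, s} \leq \int_{\mathbb{R}^n}(I_\alpha h)^q\,d{\rm cap}_{\alpha, s}.$$

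Next, Theorem \ref{Main2} applied to $h$ yields $I_\alpha h\in L^q({\rm cap}_{\alpha, s})$, so $h$ meets the finiteness hypothesis required for Theorem \ref{Main1}. Invoking Theorem \ref{Main1} gives
$$\int_{\mathbb{R}^n}(I_\alpha h)^q\,d{\rm cap}_{\alpha, s}\leq A\int_{\mathbb{R}^n}h^{s}(I_\alpha h)^{q-s}\,dx.$$
Chaining the last two displays and then taking the infimum over all admissible $h$ on the right-hand side produces the asserted bound with constant $B=A$.

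The only subtle point is that, in the range $q<s$, Theorem \ref{Main1} cannot be applied to arbitrary nonnegative $h$ (cf.\ the remark following Theorem \ref{Main1} on counterexamples such as $h(x)=e^{|x|^2}$), so the a priori membership $I_\alpha h\in L^q({\rm cap}_{\alpha, s})$ is essential; Theorem \ref{Main2} supplies exactly this membership for every $h\in \dot{\widetilde{O}}_q^{\alpha,s}$, and this is what makes the reduction work. Beyond this minor verification, no substantial obstacle is expected: the result is truly a one-line consequence of the preceding two theorems together with the monotonicity of $I_\alpha$ and of the Choquet integral.
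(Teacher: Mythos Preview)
Your argument is correct and matches the paper's own treatment: the paper does not give a separate proof of this theorem but simply states that it ``immediately yields'' from Theorem~\ref{Main2}, whose second conclusion is precisely that \eqref{capstrong2} holds for every nonnegative $h\in \dot{\widetilde{O}}^{\alpha,s}_q$. Your explicit decomposition into monotonicity of $I_\alpha$ and the Choquet integral, followed by the application of Theorem~\ref{Main2} (to guarantee $I_\alpha h\in L^q({\rm cap}_{\alpha,s})$) and Theorem~\ref{Main1}, is exactly the unpacking of that one-line remark.
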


It is not hard to see that the space $\dot{KV}_q$ is continuously embedded into $\dot{\widetilde{O}}^{\alpha,s}_{q}$ for  $1\leq q<s$, $0<\alpha<\frac{n}{s}$. 
In this paper we shall show that the two spaces are indeed isomorphic. We observe that results of this type are known earlier only in the case $q=1$ (see \cite{KV}).

\begin{theorem}\label{KV-q}
	Let $\alpha\in (0,n)$, $0<s<\frac{n}{\alpha}$, and $q\in [1,s)$. Then it holds that
	$$\dot{KV}_q \approx \dot{\widetilde{O}}^{\alpha,s}_{q}.$$
\end{theorem}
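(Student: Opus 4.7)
My plan is to handle the two inclusions separately. The inclusion $\dot{KV}_q \hookrightarrow \dot{\widetilde{O}}^{\alpha,s}_q$ is the routine direction, indicated in the paragraph preceding the statement. I would take any admissible $h\ge|f|$ in the definition of $\|f\|_{\dot{KV}_q}$ and test $\|\cdot\|_{\dot{\widetilde{O}}^{\alpha,s}_q}$ against the normalized potential $w := I_\alpha h / \|I_\alpha h\|_{L^q({\rm cap}_{\alpha,s})}$, which by construction has unit $L^q({\rm cap}_{\alpha,s})$-norm. Since $q-s<0$ and $|f|\le h$, this yields
\begin{align*}
\int_{\RR^n} |f|^s w^{q-s}\,dx \;\le\; \|I_\alpha h\|_{L^q({\rm cap}_{\alpha,s})}^{s-q}\int_{\RR^n} h^s (I_\alpha h)^{q-s}\,dx,
\end{align*}
and bounding $\|I_\alpha h\|_{L^q({\rm cap}_{\alpha,s})}^{q}$ by $A\int h^s(I_\alpha h)^{q-s}\,dx$ via \eqref{capstrong2} of Theorem \ref{Main1} gives, after an infimum over admissible $h$, the embedding $\|f\|_{\dot{\widetilde{O}}^{\alpha,s}_q}\le A^{(s-q)/(sq)}\|f\|_{\dot{KV}_q}$.

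The real content of the theorem is the reverse inclusion $\dot{\widetilde{O}}^{\alpha,s}_q \hookrightarrow \dot{KV}_q$. I plan to exploit K\"othe duality within the framework of Banach function spaces, the line hinted at in the abstract. The programme has three stages: (i) verify that a suitable power of $\|\cdot\|_{\dot{\widetilde{O}}^{\alpha,s}_q}$ defines a Banach function norm and identify its K\"othe dual concretely as a Sobolev-multiplier-type space $X$; (ii) show that $\dot{KV}_q$ is likewise a Banach function space whose K\"othe dual coincides with $X$; (iii) invoke the Lorentz--Luxemburg theorem that a perfect Banach function space coincides with its K\"othe bidual, concluding $\dot{KV}_q\approx\dot{\widetilde{O}}^{\alpha,s}_q$.

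The hard part will be stage (ii), namely computing the K\"othe dual of $\dot{KV}_q$ and matching it with that of $\dot{\widetilde{O}}^{\alpha,s}_q$. One must dualize the integral $\int h^s(I_\alpha h)^{q-s}\,dx$, viewed as a weighted $L^s$-type quantity, and show that the infimum-over-majorants built into $\dot{KV}_q$ is flexible enough to recover the same Sobolev-multiplier dual norm as the infimum-over-weights in $\dot{\widetilde{O}}^{\alpha,s}_q$. The essential tools should be the capacitary strong type inequality \eqref{capstrong2} of Theorem \ref{Main1}, two-weight and Muckenhoupt-type inequalities for the Riesz potential $I_\alpha$, and Wolff-potential characterizations of the Choquet space $L^q({\rm cap}_{\alpha,s})$. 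A secondary technical point is the Fatou-type property ensuring that the bidual recovers the space, which I expect to follow from monotone-convergence arguments applied to minimizing sequences in both defining infima.
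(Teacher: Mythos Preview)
Your treatment of the embedding $\dot{KV}_q\hookrightarrow\dot{\widetilde{O}}^{\alpha,s}_q$ is correct and matches the paper's argument essentially line for line.

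For the reverse embedding, however, the paper does \emph{not} go through K\"othe duality. It gives a short, direct construction: given $f\in\dot{\widetilde{O}}^{\alpha,s}_q$ and a near-optimal weight $w$ with $\|w\|_{L^q({\rm cap}_{\alpha,s})}\le 1$, one invokes the capacitary extremal (Theorem~\ref{Fnorm} and Lemma~\ref{extreme}) to produce $\varphi\in L^s$ with $I_\alpha\varphi\ge w^{q/s}$ q.e.\ and $\|\varphi\|_{L^s}^{s/q}\lesssim 1$, and sets
\[
h \;=\; |f|\,w^{q/s-1}+\delta\,\varphi .
\]
Then $h\in L^s$, so $h(I_\alpha h)^{s/q-1}\in\dot{\widetilde{O}}^{\alpha,s}_q$; the integration-by-parts lemma (Lemma~\ref{IBPL}) gives both $\|h(I_\alpha h)^{s/q-1}\|_{\dot{KV}_q}\le c\,\|h\|_{L^s}^{s/q}$ and the pointwise lower bound $h(I_\alpha h)^{s/q-1}\ge \delta^{s/q-1}|f|$. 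Choosing $\delta=\|f\|_{\dot{\widetilde{O}}^{\alpha,s}_q}$ finishes the proof. No duality, Fatou property, or bidual identification is needed.

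Your K\"othe-duality programme is a genuinely different route, and there is a real gap in it: stage~(ii) asks you to prove that $\dot{KV}_q$ is a Banach function space and to compute its K\"othe dual \emph{before} knowing the isomorphism. In the paper's logical order these facts are \emph{consequences} of Theorem~\ref{KV-q}, not inputs to it: the Fatou property for $\dot{KV}_q$ (Section~\ref{triplet}) is obtained by transporting it from $(\dot{M}^{\alpha,s}_{s',r})'$ via Theorem~\ref{Main3}, which already uses Theorem~\ref{KV-q}. Proving the Fatou property of $\dot{KV}_q$ directly is nontrivial --- the defining infimum over majorants $h$ does not obviously behave well under $g_j\uparrow g$ --- and you have not indicated how you would do it. Likewise, identifying $(\dot{KV}_q)'$ with the multiplier space $X$ without first passing through $\dot{\widetilde{O}}^{\alpha,s}_q$ would itself require an argument of comparable depth to the theorem you are trying to prove. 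The paper's constructive proof sidesteps all of this with the single function $h$ above; I would recommend you adopt that construction.
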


However, Theorem \ref{KV-q} does not say much about the nature of $\dot{\widetilde{O}}^{\alpha,s}_{q}$ (or $\dot{KV}_q$). For example, at this point we do not know if the Hardy-Littlewood maximal function as well as  standard Calder\'on-Zygmund type operators are bounded on those spaces.
To this end, we  will  show in Theorem \ref{Main3} below that they are  actually isomorphic to the K\"othe dual of  
a Sobolev multiplier type space. But first we need to provide the definition of Sobolev multiplier type space.

\begin{definition}
	Let $s>1$, $0<\alpha<\frac{n}{s}$, $p>1$, and $0< r \leq  s$. Define  $\dot{M}^{\alpha,s}_{p,r}=\dot{M}^{\alpha,s}_{p,r}(\mathbb{R}^n)$ as the space of 
	functions $f\in L^p_{\rm loc}(\mathbb{R}^n)$
	such that the trace inequality 
	\begin{equation*}
	\left(\int_{\mathbb{R}^n} ({I}_\alpha  h) ^r |f|^p dx\right)^{\frac{1}{p}} \leq C \|h\|_{L^s(\mathbb{R}^n)}^{\frac{r}{p}},
	\end{equation*} 
	holds for all nonnegative functions $h\in L^s(\mathbb{R}^n)$. A norm  of a function $f\in \dot{M}^{\alpha,s}_{p, r}$  is defined as the least possible constant $C$ in the above inequality.
\end{definition}

Note that  the case $r=s$ goes back to $\dot{M}^{\alpha,s}_{p}$ that was considered earlier in \cite{OP1}, i.e.,  one has $\dot{M}^{\alpha,s}_{p,s}=\dot{M}^{\alpha,s}_{p}$. %Thus, in what follows, the notations  $M^{\alpha,s}_{p,s}$ and $M^{\alpha,s}_{p}$ are used interchangeably.

With this, we recall that the K\"othe dual
$(\dot{M}^{\alpha, s}_{p,r})'$ of $\dot{M}^{\alpha, s}_{p,r}$ is defined by 
\begin{equation*}
(\dot{M}^{\alpha,s}_{p,r})':=\left\{{\rm measurable~ functions~} g: \sup_{f}\int_{\mathbb{R}^n}|fg|dx<+\infty\right\},
\end{equation*}
where the supremum is taken over all functions $f$ in the unit ball of $\dot{M}^{\alpha,s}_{p,r}$. The norm of a function $g\in (\dot{M}^{\alpha,s}_{p,r})'$ is defined as the above 
supremum.

Let $s>1$,   $0<\alpha<\frac{n}{s}$,  $0< r\leq s$, and $p>1$.  With $p'=\frac{p}{p-1}$, we now define $\dot{N}^{\alpha,s}_{p', s/r}=\dot{N}^{\alpha,s}_{p', s/r}(\mathbb{R}^n)$ as the space of all measurable functions $g$ in
$\mathbb{R}^n$ such that there exists a quasi-continuous weight $w\in A_{1}$ with $\|w\|_{L^{s/r}({\rm cap}_{\alpha,s})}\leq 1$ and $[w]_{A_1}\leq \bar{\bf c}(n,\alpha)$  such that    
$$\left(\int_{\mathbb{R}^n} |g|^{p'} w^{1-p'} dx   \right)^{\frac{1}{p'}}<+\infty.$$
In what follows, for $0\leq w<+\infty$ a.e., the integral above is understood as $\int_{\{g\not=0\}} |g|^{p'} w^{1-p'} dx$.
This implies that $g=0$ a.e. on the set $\{w=0\}$. 
Recall   that $A_1$ is the class of  $A_1$ weights which consists of nonnegative locally integrable functions $w$ in $\mathbb{R}^n$ such that
\begin{equation*} 
{\bf M} w(x)\leq C w(x) \qquad {\rm a.e.}
\end{equation*}
 The $A_1$ characteristic constant of $w$, $[w]_{A_1}$, is defined as the least possible constant $C$ in the above inequality.
The operator ${\bf M}$ stands for the (center)  Hardy-Littlewood maximal function defined for each $f\in L^1_{\rm loc}(\mathbb{R}^n)$ by 
\begin{equation*}
{\bf M} f (x)= \sup_{r>0}   \frac{1}{|B_r(x)|} \int_{B_{r}(x)} |f(y)|dy.
\end{equation*}

A quasi-norm for $\dot{N}^{\alpha,s}_{p', s/r}$ is given by 
$$\|g\|_{\dot{N}^{\alpha,s}_{p', s/r}} := \inf_{w}\left(\int_{\mathbb{R}^n} |g|^{p'} w^{1-p'} dx   \right)^{\frac{1}{p'}}, \quad g\in \dot{N}^{\alpha,s}_{p', s/r},$$
where the infimum is taken over all quasi-continuous weights $w\in A_{1}$ with $\|w\|_{L^{s/r}({\rm cap}_{\alpha,s})}\leq 1$ and $[w]_{A_1}\leq \bar{\bf c}(n,\alpha)$.  
The constant $\bar{\bf c}(n,\alpha)$ depends only on $n, \alpha$, and can be taken as the $A_1$ constant of the Riesz kernel $\gamma(n, \alpha)\left|\cdot\right|^{\alpha-n}$. That is, we can take $\bar{\bf c}(n,\alpha)=[\left|\cdot\right|^{\alpha-n}]_{A_1}$. Note that this constant is sharper than the implicit constant $\bar{\bf c}(n,\alpha,s)$ that was used in \cite{OP1} for the case $r=s$. 

Likewise, we define $\dot{\widetilde{N}}^{\alpha,s}_{p', s/r}=\dot{\widetilde{N}}^{\alpha,s}_{p', s/r}(\mathbb{R}^n)$ as the space of all measurable functions $g$ in
$\mathbb{R}^n$ such that $\|g\|_{\dot{\widetilde{N}}^{\alpha,s}_{p', s/r}}<+\infty$, where now 
$$\|g\|_{\dot{\widetilde{N}}^{\alpha,s}_{p', s/r}} := \inf_{w}\left(\int_{\mathbb{R}^n} |g|^{p'} w^{1-p'} dx   \right)^{\frac{1}{p'}}, \quad g\in \dot{\widetilde{N}}^{\alpha,s}_{p', s/r},$$
with the infimum being taken over all q.e. defined weights $w$ such that  $\|w\|_{L^{s/r}({\rm cap}_{\alpha,s})}\leq 1$.  That is, we do not require quasi-continuity and $A_1$ conditions on the weight $w$ in the definition of $\dot{\widetilde{N}}^{\alpha,s}_{p', s/r}$.

We remark that since  $\norm{w}_{L^\frac{n}{n-\alpha s}(\RR^n)}\leq C \norm{w}_{L^1({\rm cap_{\alpha, s}})}$ (see, e.g., \cite[Equ. (3.1)]{OP3}), it is easy to see from H\"older's inequality that $\dot{N}^{\alpha,s}_{p', s/r}, \dot{\widetilde{N}}^{\alpha,s}_{p', s/r} \hookrightarrow L^1_{\rm loc}(\RR^n)$. Moreover, one can verify that the two are quasi-normed spaces.

It is obvious that for $1\leq q<s$, $$\dot{\widetilde{O}}^{\alpha,s}_{q}=\dot{\widetilde{N}}^{\alpha,s}_{s, s/r},$$ where $r=\frac{s(s-q)}{(s-1)q}$. Similarly, we have 
 $$\dot{O}^{\alpha,s}_{q}=\dot{N}^{\alpha,s}_{s, s/r}, \quad 1\leq q<s, \quad r=\frac{s(s-q)}{(s-1)q},$$ where $\dot{O}^{\alpha,s}_{q}=\dot{O}^{\alpha,s}_{q}(\mathbb{R}^n)$ is the   space of all measurable functions $g$ in  $\mathbb{R}^n$ such that $\|g\|_{\dot{O}^{\alpha,s}_{q}}<+\infty$. The quasi-norm $\|g\|_{\dot{O}^{\alpha,s}_{q}}$ is given by
  $$\|g\|_{\dot{O}^{\alpha,s}_{q}} := \inf_{w}\left(\int_{\mathbb{R}^n} |g|^{s} w^{q-s} dx   \right)^{\frac1s}, \qquad g\in \dot{O}^{\alpha,s}_{q},$$
 where the infimum is taken over all quasi-continuous weights $w$ such that  $\|w\|_{L^q({\rm cap}_{\alpha,s})}\leq 1$ and $[w^{\frac{s-q}{s-1}}]_{A_1}\leq \bar{\bf c}(n,\alpha)$.

\begin{theorem}\label{Main3} Let $s>1$, $0<\alpha<\frac{n}{s}$, $p>1$, and $0< r \leq  s$. Then it holds that, isomorphically,
	$$(\dot{M}^{\alpha, s}_{p,r})' \approx \dot{\widetilde{N}}^{\alpha,s}_{p', s/r} \approx \dot{N}^{\alpha,s}_{p', s/r}.$$
	In particular, for any $1\leq q < s$,
	$$\dot{KV}_q \approx \dot{\widetilde{O}}^{\alpha,s}_{q}\approx \dot{O}^{\alpha,s}_{q} =\dot{N}^{\alpha,s}_{s, \frac{(s-1)q}{s-q}} \approx \left(\dot{M}^{\alpha, s}_{s', \frac{s(s-q)}{(s-1)q}}\right)'$$
\end{theorem}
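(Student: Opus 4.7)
The plan is to establish the main chain
\[
(\dot{M}^{\alpha,s}_{p,r})' \approx \dot{\widetilde{N}}^{\alpha,s}_{p',s/r} \approx \dot{N}^{\alpha,s}_{p',s/r}
\]
through the cyclic sequence of continuous containments $\dot{N}^{\alpha,s}_{p',s/r}\hookrightarrow \dot{\widetilde{N}}^{\alpha,s}_{p',s/r}\hookrightarrow (\dot{M}^{\alpha,s}_{p,r})'\hookrightarrow \dot{N}^{\alpha,s}_{p',s/r}$. The first of these is immediate from the definitions, as the infimum defining the $\dot{\widetilde{N}}$ quasi-norm is taken over a strictly larger class of admissible weights (no quasi-continuity or $A_1$ restriction is imposed). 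The ``in particular'' chain then follows by the parameter specialisation $p=s'$, $r=s(s-q)/((s-1)q)$, so that $p'=s$ and $s/r=(s-1)q/(s-q)$: at these values the definitions collapse to $\dot{\widetilde{N}}^{\alpha,s}_{s,(s-1)q/(s-q)}=\dot{\widetilde{O}}^{\alpha,s}_{q}$ and $\dot{N}^{\alpha,s}_{s,(s-1)q/(s-q)}=\dot{O}^{\alpha,s}_{q}$, and combining with $\dot{KV}_q\approx \dot{\widetilde{O}}^{\alpha,s}_q$ from Theorem \ref{KV-q} seals the full chain.

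For $\dot{\widetilde{N}}^{\alpha,s}_{p',s/r}\hookrightarrow (\dot{M}^{\alpha,s}_{p,r})'$, I would take $g\in\dot{\widetilde{N}}$ with admissible weight $w$ and $f\in\dot{M}^{\alpha,s}_{p,r}$, and apply H\"older's inequality to get
\[
\int|fg|\,dx\le\Big(\int|f|^p w\,dx\Big)^{1/p}\Big(\int|g|^{p'}w^{1-p'}\,dx\Big)^{1/p'}.
\]
The second factor is at most $\|g\|_{\dot{\widetilde{N}}^{\alpha,s}_{p',s/r}}$ after infimising over $w$. For the first, the constraint $\|w\|_{L^{s/r}({\rm cap}_{\alpha,s})}\le 1$ rewrites as $\|w^{1/r}\|_{L^s({\rm cap}_{\alpha,s})}\le 1$, and the classical capacitary representation theorem (a dual form of Maz'ya's capacitary strong type inequality \eqref{CSIM}) produces $h\ge 0$ with $\|h\|_{L^s}\lesssim 1$ and $I_\alpha h\ge w^{1/r}$ q.e.; hence $w\le (I_\alpha h)^r$ q.e., and the trace inequality defining $\dot M^{\alpha,s}_{p,r}$ yields $\int|f|^p w\,dx\le\int|f|^p(I_\alpha h)^r\,dx\le\|f\|_{\dot M^{\alpha,s}_{p,r}}^p\|h\|_{L^s}^r\lesssim \|f\|_{\dot M^{\alpha,s}_{p,r}}^p$.

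The main obstacle is the last embedding $(\dot{M}^{\alpha,s}_{p,r})'\hookrightarrow \dot{N}^{\alpha,s}_{p',s/r}$, where, given $g\in(\dot M)'$, one must construct a single weight $w$ that is simultaneously (i) quasi-continuous, (ii) $A_1$ with constant at most $\bar{\mathbf{c}}(n,\alpha)=[|\cdot|^{\alpha-n}]_{A_1}$, (iii) of bounded Choquet $L^{s/r}({\rm cap}_{\alpha,s})$ norm, and (iv) satisfying $\int|g|^{p'}w^{1-p'}\,dx\lesssim\|g\|_{(\dot M)'}^{p'}$. The natural ansatz is $w=I_\alpha h$ for a suitable $h\ge 0$ in $L^s$: (i) follows from lower semicontinuity of Riesz potentials, (ii) is inherited from the $A_1$ character of the Riesz kernel itself, and (iii) is exactly controlled by Theorem \ref{Main1} at exponent $q=s/r\ge 1$, which delivers
\[
\int(I_\alpha h)^{s/r}\,d{\rm cap}_{\alpha,s}\le A\int h^s(I_\alpha h)^{s/r-s}\,dx.
\]
To pin down the correct $h$, I would set up a minimax/Hahn--Banach exchange on the saddle-point description $\|g\|_{(\dot M)'}=\sup_f\int|fg|\,dx$ subject to $\sup_{\|h\|_{L^s}\le 1}\int|f|^p(I_\alpha h)^r\,dx\le 1$, then swap $\sup_f$ and $\inf_h$ by a Sion-type argument (after a standard truncation/convexification) to extract an extremal $h$, from which $w=I_\alpha h$ (suitably scaled) realises all four requirements. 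The hard part is precisely this simultaneous control of the $A_1$ constant and the Choquet norm, and this is where the full strength of Theorem \ref{Main1}, newly established here for all $q\in[1,\infty)$, is genuinely used; the earlier work \cite{OP1} handled only the endpoint $r=s$ (i.e.\ $q=1$), whereas the general case truly requires the capacitary strong type inequality at non-trivial exponents.
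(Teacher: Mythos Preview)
Your first two embeddings are correct and essentially match the paper's route (the second is one direction of the paper's Theorem \ref{barNMprime}, using the $\dot F$-norm characterization of $L^{s/r}({\rm cap}_{\alpha,s})$ from Theorem \ref{Fnorm}).

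The gap is in the third step $(\dot{M}^{\alpha,s}_{p,r})'\hookrightarrow \dot{N}^{\alpha,s}_{p',s/r}$. Your saddle-point description is not properly formulated: as written, the constraint on $f$ is a \emph{supremum} over $h$, so there is no $\inf_h$ to swap with $\sup_f$. What one actually needs is to exchange $\inf_w\sup_f$ in the representation $\|g\|_{\dot N}=\inf_w\sup_f\int|fg|/(\int|f|^p w)^{1/p}$, and while the objective is convex in $w$ and quasiconcave in nonnegative $f$, neither domain is compact and you have not indicated which version of minimax survives this. Even granting a minimax, the infimum over $w$ need not be attained, so one does not directly ``extract an extremal $h$.'' Finally, the ansatz $w=I_\alpha h$ is not the right one when $r>1$: a Riesz potential is $A_1$, but it does not dominate $(I_\alpha h)^r$, which is what is needed to control $\int|g|^{p'}w^{1-p'}$ from above.

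The paper avoids minimax entirely and takes a constructive, $g$-independent route. It first proves $\dot{\widetilde N}^{\alpha,s}_{p',s/r}\hookrightarrow \dot N^{\alpha,s}_{p',s/r}$ directly (Theorem \ref{3spa}): given \emph{any} admissible weight $w_0$ for $\dot{\widetilde N}$, choose $h\ge 0$ with $I_\alpha h\ge w_0^{1/r}$ q.e.\ and $\|h\|_{L^s}\le 1$ (Lemma \ref{extreme}), and then set
\[
w=\begin{cases}(I_\alpha h)^r,& 0<r\le 1,\\[2pt] I_\alpha\bigl(h\,(I_\alpha h)^{r-1}\bigr),& 1<r\le s.\end{cases}
\]
In the first case $(I_\alpha h)^r\in A_1$ because $r\le 1$, and its Choquet norm is controlled by \eqref{CSIM}; in the second case $w$ is a Riesz potential (hence $A_1$ with the sharp constant $\bar{\mathbf c}(n,\alpha)$), satisfies $w\ge c^{-1}(I_\alpha h)^r\ge c^{-1}w_0$ by the integrating-by-parts Lemma \ref{IBPL}, and its $L^{s/r}({\rm cap}_{\alpha,s})$-norm is bounded precisely by Theorem \ref{Main1} with $q=s/r$, since the computation collapses to $\int h^s\,dx$. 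This is where the new capacitary strong type inequality is genuinely used, but the point is that the $A_1$ weight is built from $w_0$ alone, not from $g$. The equality $(\dot M^{\alpha,s}_{p,r})'=\dot{\overline N}^{\alpha,s}_{p',s/r}$ is then obtained separately (Theorem \ref{barNMprime}) by a direct two-sided K\"othe-dual computation, with the nontrivial direction using the test weight $w=(I_\alpha h)^r/\|h\|_{L^s}^r$.
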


We remark that Theorem \ref{Main3} is known in the case $r=s$, but under a weaker $A_1$ bound in the definition of $\dot{N}^{\alpha,s}_{p', 1}$ (see \cite{OP1}). Our approach here is different from that of \cite{OP1} and works simultaneously for all $0<r\leq s$.

An application of Theorem \ref{Main3} is that it implies that the Hardy-Littlewood maximal function and Calder\'on-Zygmund type operators are bounded on 
$(\dot{M}^{\alpha, s}_{p, r})'$, $p>1, 0<r\leq s$ and $\dot{\widetilde{O}}^{\alpha,s}_q$, $1\leq q <s$. Also, by Theorem \ref{secondtheorem} below, such operators are bounded on $\dot{M}^{\alpha, s}_{p, r}$.
Moreover, as a consequence of  Theorems \ref{Main2} and \ref{Main3}, we can now obtain other characterizations for the $L^q({\rm cap}_{\alpha,s})$ quasi-norm, when $1\leq q<s$.
For a q.e. defined function $u$ in $\mathbb{R}^n$ we denote by $\dot{\lambda}^{\alpha,s}_q(u)$ and $\dot{\beta}^{\alpha,s}_q(u)$, $s>1$, $0<\alpha  < \frac{n}{s}$, $1\leq q<s$, the following quantities:
$$\dot{\lambda}^{\alpha,s}_q(u):= \inf\left\{\|f\|_{\dot{\widetilde{O}}^{\alpha,s}_{q}}:  0\leq f\in \dot{\widetilde{O}}^{\alpha,s}_{q} \text{ and }  I_\alpha f\geq |u| \text{ q.e.} \right\}$$\
and
$$\dot{\beta}^{\alpha,s}_q(u):= \inf\left\{\left(\int_{\mathbb{R}^n} f^s (I_\alpha f)^{q-s}dx\right)^{\frac1q}:  0\leq f\in \dot{\widetilde{O}}^{\alpha,s}_{q},\,   I_\alpha f\geq |u| \text{ q.e.} \right\}.$$

\begin{theorem}\label{Newnorm2} Let  $s>1$,  $0<\alpha  < \frac{n}{s}$, and $1\leq q<s$. For any q.e. defined function  $u$ in $\mathbb{R}^n$, it holds that 
	\begin{equation*} 
	\|u\|_{L^q({\rm cap}_{\alpha,s})}\simeq \dot{\lambda}^{\alpha,s}_q(u) \simeq \dot{\beta}^{\alpha,s}_q(u).
	\end{equation*}
\end{theorem}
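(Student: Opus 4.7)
The plan is to establish the chain $\|u\|_{L^q({\rm cap}_{\alpha,s})}\lesssim \dot\lambda^{\alpha,s}_q(u)\simeq \dot\beta^{\alpha,s}_q(u)\lesssim \|u\|_{L^q({\rm cap}_{\alpha,s})}$. The two upper bounds on $\|u\|_{L^q({\rm cap}_{\alpha,s})}$ are immediate from the capacitary strong-type inequalities already proved: for any nonnegative $f$ with $I_\alpha f\geq |u|$ quasi-everywhere, the pointwise bound $|u|^q\leq(I_\alpha f)^q$ q.e., integrated against $d{\rm cap}_{\alpha,s}$ and combined with Theorems \ref{Main2} and \ref{Main1} respectively, gives
$$\|u\|_{L^q({\rm cap}_{\alpha,s})}^q\leq B\,\|f\|_{\dot{\widetilde{O}}^{\alpha,s}_q}^q\qquad\text{and}\qquad \|u\|_{L^q({\rm cap}_{\alpha,s})}^q\leq A\int_{\mathbb{R}^n}f^s(I_\alpha f)^{q-s}\,dx.$$
Taking infima over $f$ yields $\|u\|_{L^q({\rm cap}_{\alpha,s})}\lesssim \dot\lambda^{\alpha,s}_q(u)$ and $\|u\|_{L^q({\rm cap}_{\alpha,s})}\lesssim \dot\beta^{\alpha,s}_q(u)$.

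Next, the equivalence $\dot\lambda^{\alpha,s}_q(u)\simeq \dot\beta^{\alpha,s}_q(u)$ is transferred directly from Theorem \ref{KV-q}. For any $f\geq 0$ admissible for $\dot\beta$, choosing $h=f$ in the definition of $\|f\|_{\dot{KV}_q}$ and invoking Theorem \ref{KV-q} gives
$$\|f\|_{\dot{\widetilde{O}}^{\alpha,s}_q}\simeq \|f\|_{\dot{KV}_q}\leq\Bigl(\int_{\mathbb{R}^n}f^s(I_\alpha f)^{q-s}\,dx\Bigr)^{1/q},$$
which infimizes to $\dot\lambda\lesssim\dot\beta$. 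Conversely, given $f$ admissible for $\dot\lambda$ and $\varepsilon>0$, the infimum in $\|f\|_{\dot{KV}_q}$ produces some $h\geq f$ with
$$\Bigl(\int_{\mathbb{R}^n}h^s(I_\alpha h)^{q-s}\,dx\Bigr)^{1/q}\leq\|f\|_{\dot{KV}_q}+\varepsilon\simeq \|f\|_{\dot{\widetilde{O}}^{\alpha,s}_q}+\varepsilon;$$
since $h\geq f\geq 0$ forces $I_\alpha h\geq I_\alpha f\geq |u|$ q.e., this $h$ is admissible for $\dot\beta$, giving $\dot\beta\lesssim\dot\lambda$.

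The remaining and main step is $\dot\beta^{\alpha,s}_q(u)\lesssim \|u\|_{L^q({\rm cap}_{\alpha,s})}$, which requires a concrete near-extremal construction. I would proceed through a dyadic layering of the level sets $E_k:=\{|u|>2^k\}$, $k\in\mathbb{Z}$: by the definition of capacity pick $f_k\geq 0$ with $I_\alpha f_k\geq \chi_{E_k}$ q.e. and $\|f_k\|_{L^s}^s\leq 2\,{\rm cap}_{\alpha,s}(E_k)$, and set $f:=C\sum_{k\in\mathbb{Z}}2^k f_k$. A geometric-series argument at each point of the dyadic annulus $\{2^j<|u|\leq 2^{j+1}\}$ forces $I_\alpha f\geq |u|$ q.e.\ once $C$ is large. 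The hard part will be the estimate
$$\int_{\mathbb{R}^n}f^s(I_\alpha f)^{q-s}\,dx\lesssim \sum_{k\in\mathbb{Z}}2^{kq}\,{\rm cap}_{\alpha,s}(E_k)\simeq \|u\|_{L^q({\rm cap}_{\alpha,s})}^q,$$
delicate because $q-s<0$ lets the integrand blow up where $I_\alpha f$ is small or where the mass of $f$ drifts off the support of $u$. To cope with this I would use the isomorphic reformulation $\dot{\widetilde{O}}^{\alpha,s}_q\approx \dot O^{\alpha,s}_q$ afforded by Theorem \ref{Main3} and bound $\|f\|_{\dot O^{\alpha,s}_q}$ by testing against the weight $w:=c\,({\bf M}|u|^t)^{1/t}$ for a suitable $t\in ((s-q)/(s-1),1)$; such $w$ is pointwise $\gtrsim |u|$, comparable to $|u|$ in $L^q({\rm cap}_{\alpha,s})$-norm, and satisfies the required $A_1$ bound via the Coifman--Rochberg theorem, after which $\int f^s w^{q-s}dx\lesssim \|u\|_{L^q({\rm cap}_{\alpha,s})}^s$ should reduce to decomposing over the dyadic shells $\{w\simeq 2^k\}$ and summing through Minkowski's inequality together with the capacitary control on $\|f_k\|_{L^s}$.
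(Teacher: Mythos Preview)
Your first two steps are correct and parallel the paper's argument. For the hard direction $\dot\beta^{\alpha,s}_q(u)\lesssim\|u\|_{L^q({\rm cap}_{\alpha,s})}$, however, the paper takes a completely different and much shorter route that avoids any dyadic construction. By Theorem~\ref{Fnorm} (with $r=s/q$) one has $\|u\|_{L^q({\rm cap}_{\alpha,s})}^q\simeq\inf\{\|f\|_{L^s}^s: f\geq0,\ I_\alpha f\geq|u|^{q/s}\text{ q.e.}\}$. Given such an $f$, Lemma~\ref{IBPL} raises the potential: $|u|\leq(I_\alpha f)^{s/q}\leq c\,I_\alpha g$ with $g:=f(I_\alpha f)^{s/q-1}\in\dot{\widetilde O}^{\alpha,s}_q$ (cf.\ \eqref{hIhO}). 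Then $g$ is admissible for $\dot\beta$, and another application of Lemma~\ref{IBPL} (giving $(I_\alpha g)^{q-s}\leq c\,(I_\alpha f)^{(s/q)(q-s)}$ since $q-s<0$) produces the clean cancellation
\[
\int_{\mathbb{R}^n} g^s(I_\alpha g)^{q-s}\,dx\;\leq\; c\int_{\mathbb{R}^n} f^s(I_\alpha f)^{s(s/q-1)+(s/q)(q-s)}\,dx\;=\;c\int_{\mathbb{R}^n} f^s\,dx.
\]
Infimizing over $f$ and invoking Theorem~\ref{Fnorm} closes the loop.

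Your proposed layering argument, by contrast, has a genuine gap. The near-extremal $f_k$ for ${\rm cap}_{\alpha,s}(E_k)$ are potentials supported on all of $\mathbb{R}^n$, not concentrated near $E_k$; on the bulk of the support of $f_k$ you therefore have no useful lower bound on $I_\alpha f$ or on your proposed weight $w=c(\mathbf M|u|^t)^{1/t}$, and since the exponent $q-s$ is negative, $\int f^s w^{q-s}\,dx$ may well diverge. The phrase ``should reduce to decomposing over dyadic shells'' hides exactly this difficulty. There is also a measurability issue: $u$ is only q.e.\ defined, so $\mathbf M|u|^t$ need not make sense as written. The paper's $\dot F$-norm trick sidesteps all of this by working with $|u|^{q/s}$ rather than $|u|$ and letting the integrating-by-parts lemma do the heavy lifting.
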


In Section \ref{triplet} below, we revisit the space $\dot{KV}_q$, $1\leq q<s$, and show that it is indeed a Banach function space in the sense of \cite{Lux}.  Moreover, we also discuss the duality triplet 
$$\overline{C_{c}(\mathbb{R}^{n})}^{\dot{M}_{p,r}^{\alpha,s}}\text{--}(\dot{M}_{p,r}^{\alpha,s})'\text{--}\dot{M}_{p,r}^{\alpha,s},$$
where $\overline{C_{c}(\mathbb{R}^{n})}^{\dot{M}_{p,r}^{\alpha,s}}$ indicates the closure of the set of continuous functions with compact support  in $\dot{M}_{p,r}^{\alpha,s}$. That is, we point out that 
\begin{equation}\label{2dual}
  \left(\overline{C_{c}(\mathbb{R}^{n})}^{\dot{M}_{p,r}^{\alpha,s}}\right)^*=(\dot{M}_{p,r}^{\alpha,s})', \quad {\rm and~} \left[(\dot{M}_{p,r}^{\alpha,s})'\right]^* = \dot{M}_{p,r}^{\alpha,s}.
  \end{equation}

To conclude this section, we remark that all of the above results are also available in the inhomogeneous case, where the  space of Riesz potentials   
 $\dot{H}^{\alpha, s}(\mathbb{R}^n)$ and Riesz capacity ${\rm cap}_{\alpha,s}$ are replaced with the space of Bessel potentials $H^{\alpha, s}(\mathbb{R}^n)$ and Bessel capacity ${\rm Cap}_{\alpha,s}$, $s>1, 0<\alpha\leq \frac{n}{s}$, respectively; see Section \ref{inhomo} for the details.

\vspace{.2in}
\noindent {\bf Notation.} In the above and in what follows, for two quasi-normed spaces $F$ and $G$ we write $F\approx G$ (respectively, $F=G$) to indicate that the two spaces are isomorphic (respectively, isometrically isomorphic). For two quantities $A$ and $B$, we write $A\simeq B$ to mean that there exist positive constants $c_1$ and $c_2$ such that $c_1 A\leq B\leq c_2 A$.

\section{Banach function spaces}%\label{BFS}

 Most of the spaces of this work are   Banach function spaces or are isomorphic to Banach function spaces. We shall adopt the definition of Banach function spaces used in \cite{Lux} (see also \cite{LN}). A Banach function space $X$ on $\mathbb{R}^n$, with Lebesgue measure as the underlying measure, is the set of all Lebesgue measurable functions $f$ in $\mathbb{R}^n$ such that 
$\|f\|_{X}:=\rho(|f|)$ is finite. Here $\rho(f)$, $f\geq 0$, is a given metric function ($0\leq \rho(f)\leq \infty$) that obeys the following properties:

\vspace{.1in}
(P1) $\rho(f)=0$ if and only if $f(x)=0$ a.e. in $\mathbb{R}^n$; $\rho(f_1+f_2)\leq \rho(f_1)+ \rho(f_2)$; and  $\rho( \lambda f)= \lambda \rho(f)$ for any constant $\lambda\geq 0$.

(P2) (Fatou's property) If $\{f_j\}$, $j=1,2, \dots$, is a sequence of nonnegative measurable functions and $f_j \uparrow f$ a.e. in $\mathbb{R}^n$, then $\rho(f_j) \uparrow \rho(f).$

(P3) If   $E$ is any bounded  and measurable subset of $\mathbb{R}^n$, and $\chi_E$ is its characteristic
function, then $\rho(\chi_E)<+\infty$.

(P4) For every bounded and measurable subset  $E$ of $\mathbb{R}^n$, there exists a finite constant
$A_E\geq 0$ (depending only on the set $E$) such that $\int_{E} f dx \leq A_E \rho(f)$
for any nonnegative measurable function $f$ in $\mathbb{R}^n$.	

\vspace{.1in}

It follows from property (P2) that any Banach function space $X$ is complete (see \cite{Lux}). We also have that, for measurable functions $f_1$ and $f_2$, if $|f_1|\leq |f_2|$ a.e. in $\mathbb{R}^n$ and $f_2 \in X$, then it follows that $f_1\in X$ and $\|f_1\|_X \leq \|f_2\|_X$.

Given  a Banach function space $X$, the K\"othe dual space (or the associate space) to $X$, denoted by $X'$, is the set of all measurable functions $f$ such that $f g \in L^1(\mathbb{R}^n)$ for all $g \in X$. It turns out that $X'$ is also a Banach function space with the associate metric function $\rho'(f)$, $f\geq 0$, defined by
$$\rho'(f):= \sup\left\{\int |fg| dx: g\in X, \, \|g\|_{X}\leq 1  \right\}.$$ 

By definition, the second associate space $X''$ to $X$ is given by $X''=(X')'$, i.e., $X''$ is the K\"othe dual space to $X'$.  The following theorems are fundamental in the theory of Banach function spaces (see \cite{Lux}).

\begin{theorem}\label{XX''} Every Banach function space $X$ coincides
	with its second associate space $X''$, i.e., $X=X''$ with  equality of norms.	
\end{theorem}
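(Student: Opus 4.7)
My plan is to establish the isometric equality $X=X''$ by proving the two norm inequalities separately. The direction $\|f\|_{X''}\leq\|f\|_X$ is essentially built into the definitions: for any $g\in X'$ with $\|g\|_{X'}\leq 1$, the very formula $\rho'(|g|)=\sup\{\int|fg|\,dx:\|f\|_X\leq 1\}$ gives the H\"older-type bound $\int|fg|\,dx\leq\|f\|_X\|g\|_{X'}$; taking supremum over such $g$ yields $\|f\|_{X''}\leq\|f\|_X$.

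For the reverse inequality I would reduce to bounded, compactly supported functions by truncation and then invoke Hahn--Banach separation. Without loss of generality replace $f$ by $|f|\geq 0$ and set $f_k:=\min(|f|,k)\,\chi_{B(0,k)}$, so that $0\leq f_k\uparrow|f|$ a.e. By (P2) we have $\rho(f_k)\uparrow\rho(|f|)=\|f\|_X$, while the lattice property gives $\|f_k\|_{X''}\leq\|f\|_{X''}$. It therefore suffices to show $\rho(h)\leq\rho''(h)$ for every nonnegative bounded $h$ supported in some ball $K$; such an $h$ lies in $X$ by (P3) and in $L^1(K)$ by (P4). Arguing by contradiction, suppose $\rho''(h)<M<\rho(h)$ and consider $B_M:=\{u\in L^1(K):\|u\|_X\leq M\}$ (identifying each $u$ with its zero-extension to $\mathbb{R}^n$). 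This set is convex and symmetric by (P1), and closed in $L^1(K)$: any $L^1$-convergent sequence has an a.e.-convergent subsequence, to which the generalized Fatou inequality $\rho(\liminf|u_i|)\leq\liminf\rho(|u_i|)$ applies; this in turn follows from (P2) applied to the nondecreasing sequence $\inf_{i\geq j}|u_i|$. Since $h\notin B_M$, Hahn--Banach separation yields $g\in L^\infty(K)\cong L^1(K)^*$ with $\int hg\,dx>\sup_{u\in B_M}\int ug\,dx$. The symmetry of $B_M$ together with the construction $u_0:=M|f_0|\operatorname{sgn}(g)$ for $f_0\in X$ with $\|f_0\|_X\leq 1$ (which by (P4) and the lattice property lies in $B_M$ and satisfies $\int u_0 g\,dx=M\int|f_0 g|\,dx$) identifies the right-hand side exactly with $M\|g\|_{X'}$. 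Therefore $\|h\|_{X''}\geq\int|hg|\,dx/\|g\|_{X'}\geq\int hg\,dx/\|g\|_{X'}>M$, contradicting $\|h\|_{X''}=\rho''(h)<M$.

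I expect the main obstacle to be the tight bookkeeping around the Hahn--Banach step, namely verifying that $B_M$ is genuinely closed in $L^1(K)$ and identifying $\sup_{u\in B_M}\int ug\,dx$ with exactly $M\|g\|_{X'}$. The closure argument uses the generalized Fatou inequality, which requires passing from the monotone statement (P2) to its $\liminf$-version via a preliminary $\inf$-step. The exact identification of the dual norm uses (P4) to restrict functions of $X$ to $L^1(K)$ without norm loss and the lattice property to justify the sign adjustment $f_0\mapsto|f_0|\operatorname{sgn}(g)$. It is precisely these exact computations, rather than mere inequalities, that promote the conclusion from an isomorphism to an equality of norms.
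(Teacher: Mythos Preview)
The paper does not supply its own proof of this theorem; it is simply quoted as a fundamental result of Banach function space theory with a reference to Luxemburg \cite{Lux}. So there is nothing in the paper to compare your argument against line by line.

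That said, your proof is correct and is essentially the classical Lorentz--Luxemburg argument. The easy direction $\|f\|_{X''}\leq\|f\|_X$ is exactly as you say. For the hard direction, the truncation step is justified because each $f_k$ is bounded with bounded support, hence lies in $X$ by (P3) and the lattice property, and (P2) for $X$ together with the lattice property for $X''$ lets you pass to the limit. In the Hahn--Banach step the two points you worry about are fine: first, the separating functional $g\in L^\infty(K)$ automatically belongs to $X'$, since (P4) gives $\|g\|_{X'}\leq A_K\|g\|_{L^\infty}<\infty$; second, the identification $\sup_{u\in B_M}\int ug\,dx=M\|g\|_{X'}$ is exact because $g$ vanishes off $K$, so in the supremum defining $\|g\|_{X'}$ one may replace any competitor $f_0$ by $f_0\chi_K$ without loss, and your sign-twist $u_0=M|f_0|\operatorname{sgn}(g)$ then realizes the supremum. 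The closedness of $B_M$ in $L^1(K)$ via the $\liminf$ form of Fatou (obtained from (P2) applied to $\inf_{i\geq j}|u_i|$) is also correct.
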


\begin{theorem}\label{X*X'} $X^*=X'$ (isometrically) if and only if the space $X$ has an absolutely continuous norm. 
\end{theorem}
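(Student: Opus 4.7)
The plan is to establish the two directions of the equivalence separately, after first disposing of the easy isometric embedding. The inclusion $X' \hookrightarrow X^{*}$ is immediate from the very definition of $X'$: for $g \in X'$, the formula $L_{g}(f) := \int_{\mathbb{R}^n} f g \, dx$ defines a bounded linear functional on $X$, and $\|L_{g}\|_{X^{*}} = \|g\|_{X'}$ follows directly from the formula for the associate norm $\rho'$. The genuine content of the theorem is therefore the equivalence between absolute continuity of the norm and surjectivity of this embedding.

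For the sufficiency direction ($\Leftarrow$), I would fix $L \in X^{*}$ and manufacture the representing function $g$ via a Radon--Nikodym procedure. Set $\lambda(E) := L(\chi_{E})$ on bounded measurable subsets of $\mathbb{R}^n$; this is finite by property (P3). Finite additivity is automatic from linearity; countable additivity is exactly the place where absolute continuity of the norm enters. Indeed, if $E_{n} \downarrow \emptyset$ with all $E_n$ contained in a fixed bounded set $B$, then $\chi_{E_{n}} \downarrow 0$ pointwise and $\chi_{E_{n}} \le \chi_{B} \in X$ by (P3), so the absolute continuity of the norm of $\chi_{B}$ forces $\rho(\chi_{E_{n}}) \to 0$, hence $|\lambda(E_{n})| \le \|L\|\,\rho(\chi_{E_{n}}) \to 0$. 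Combined with $\lambda \ll dx$ (which comes from property (P1)), the Radon--Nikodym theorem produces a locally integrable $g$ with $\lambda(E) = \int_{E} g\,dx$. Linearity then gives $L(s) = \int s\, g\, dx$ for every simple function $s$ with bounded support, and the identity extends to all of $X$ by the density of such simple functions, which in turn follows from absolute continuity together with the Fatou property (P2). Finally the bound $|L_{g}(f)| \le \|L\|_{X^{*}}\,\rho(|f|)$ for all $f \in X$ yields $\|g\|_{X'} \le \|L\|_{X^{*}}$, so $g \in X'$ and the identification is isometric.

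For the necessity direction ($\Rightarrow$), argue by contradiction. Suppose $X$ fails to have absolutely continuous norm. Then one can select $0 \le f \in X$ and a decreasing sequence $E_{n} \downarrow \emptyset$ with $\rho(f \chi_{E_{n}}) \ge \delta > 0$ for all $n$. Let $M \subset X$ be the closed subspace consisting of $h \in X$ such that $\rho(h \chi_{E_{n}}) \to 0$. Every $\chi_{E}$ with $E$ bounded lies in $M$, since $\chi_{E}\chi_{E_{n}} \downarrow 0$ pointwise and we can dominate by $\chi_{E}$, while $f \notin M$ by construction. By Hahn--Banach there is $\Lambda \in X^{*}$ that vanishes on $M$ but satisfies $\Lambda(f) \ne 0$. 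If the hypothesis $X^{*} = X'$ held, then $\Lambda = L_{g}$ for some $g \in X'$; but then $\int_{E} g\,dx = \Lambda(\chi_{E}) = 0$ for every bounded $E$, so $g = 0$ a.e.\ and hence $\Lambda \equiv 0$, a contradiction.

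The main obstacle I anticipate is the countable-additivity step of the sufficiency direction, where one must knit together properties (P1)--(P4), the absolute continuity hypothesis, and Radon--Nikodym into a single clean argument, and then promote the resulting representation from simple functions up to all of $X$ without circular use of absolute continuity. A subtle point worth checking carefully is that absolute continuity, stated as a property of each individual element of $X$, suffices to pass from $\chi_{E_{n}} \downarrow 0$ (with the $E_{n}$ contained in a fixed bounded set) to $\rho(\chi_{E_{n}}) \to 0$; this follows by domination against $\chi_{B}$, but the reduction must be spelled out explicitly to avoid a hidden appeal to what one is trying to prove.
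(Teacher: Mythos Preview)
The paper does not prove this theorem; it is quoted from Luxemburg's thesis \cite{Lux} without proof. So there is no ``paper's proof'' to compare against, and I will simply assess your argument on its own merits.

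Your sufficiency direction ($\Leftarrow$) is the standard Radon--Nikodym construction and is essentially correct.

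Your necessity direction ($\Rightarrow$) contains a genuine gap. You assert that every $\chi_{E}$ with $E$ bounded lies in the subspace
\[
M=\{h\in X:\ \rho(h\chi_{E_{n}})\to 0\},
\]
arguing that ``$\chi_{E}\chi_{E_{n}}\downarrow 0$ pointwise and we can dominate by $\chi_{E}$.'' But passing from pointwise decrease plus domination to norm convergence is precisely the absolute-continuity property of $\chi_{E}$, which you have \emph{not} assumed and which can fail: in $X=L^{\infty}[0,1]$ with $E_{n}=[0,1/n]$ one has $\|\chi_{[0,1]}\chi_{E_{n}}\|_{\infty}=1$ for all $n$. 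Consequently the step ``$\Lambda(\chi_{E})=0$ for all bounded $E$, hence $g=0$ a.e.'' collapses.

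The fix is short and stays within your framework. Observe instead that for each fixed $m$ one has $f\chi_{E_{m}^{c}}\in M$: indeed, since $E_{n}\subset E_{m}$ for $n\ge m$, the product $(f\chi_{E_{m}^{c}})\chi_{E_{n}}$ vanishes identically for all large $n$. Hence if $\Lambda=L_{g}$ with $g\in X'$, then
\[
0=\Lambda(f\chi_{E_{m}^{c}})=\int_{\mathbb{R}^{n}} f\chi_{E_{m}^{c}}\,g\,dx,
\qquad\text{so}\qquad
\Lambda(f)=\int_{\mathbb{R}^{n}} fg\,dx=\int_{\mathbb{R}^{n}} f\chi_{E_{m}}\,g\,dx
\]
for every $m$. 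Since $|f\chi_{E_{m}}g|\le|fg|\in L^{1}$ and $f\chi_{E_{m}}g\to 0$ a.e., dominated convergence gives $\Lambda(f)=0$, contradicting $\Lambda(f)\neq 0$. With this correction your contrapositive goes through.
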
 

Here we say that $X$ has an absolutely continuous norm if the following properties are satisfied for any $f\in X$:

(a) If $E$ is a bounded set of $\mathbb{R}^n$ and $E_j$ are measurable subsets of $E$ such that $|E_j|\rightarrow 0$ as $j\rightarrow \infty$, the $\|f\chi_{E_j}\|_{X}\rightarrow0$ as $j\rightarrow \infty$.

(b) $ \|f \chi_{\mathbb{R}^n\setminus B_j(0)} \|_{X} \rightarrow 0$ as $j\rightarrow \infty$.

It is known that  $X$ has an absolutely continuous norm
if  and only if any  sequence $f_j\in X$ such that $|f_j| \downarrow 0$
a.e. in $\mathbb{R}^n$ has the property that $\| f_j\|_X \downarrow 0$ (see \cite[page 14]{Lux}).

\section{Proof of Theorem \ref{Main1}}
We will need the so-called ``integrating by parts" lemma that was obtained in \cite[Lemma 2.1]{Ver1}. 
\begin{lemma}\label{IBPL} Let $t\geq 1$ and suppose that $f$ is a nonnegative measurable function in $\mathbb{R}^n$. Then it holds that
$$(I_\alpha f)^t \leq A I_\alpha[ f(I_\alpha f)^{t-1}]$$
everywhere in $\mathbb{R}^n$.
\end{lemma}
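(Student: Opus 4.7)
My strategy is to establish the inequality from an exact one-dimensional identity for $(I_\alpha f)^t$, combined with a simple triangle-inequality estimate comparing a ``radial tail'' of $I_\alpha f$ to $I_\alpha f$ itself.

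Write $u:=I_\alpha f$ and fix $x\in\RR^n$ with $u(x)<\infty$ (the general case follows by an obvious truncation and monotone convergence). I introduce the radial tail function
$$\phi(\rho) := \gamma(n,\alpha)\int_{|x-z|>\rho}|x-z|^{\alpha-n} f(z)\,dz,\qquad \rho\geq 0,$$
so that $\phi(0)=u(x)$, $\phi(\infty)=0$, and $\phi$ is absolutely continuous on $(0,\infty)$ with $\phi'(\rho)=-\gamma(n,\alpha)\,\rho^{\alpha-n}\int_{\partial B_\rho(x)} f\, d\sigma$ for a.e.\ $\rho$. Set $\tilde w(y):=\phi(|x-y|)$. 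The first step is the \emph{exact identity}
$$u(x)^t \;=\; -t\int_0^\infty \phi(\rho)^{t-1}\phi'(\rho)\,d\rho \;=\; t\,I_\alpha\!\bigl[f\,\tilde w^{t-1}\bigr](x),$$
obtained from the fundamental theorem of calculus applied to $\phi^t$ together with the coarea formula, which converts the radial integral back into an integral over $\RR^n$.

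The second step is the pointwise comparison $\tilde w(y)\leq 2^{n-\alpha}\,u(y)$. Indeed, for any $z$ with $|x-z|>|x-y|$, the triangle inequality yields $|y-z|\leq |y-x|+|x-z|<2|x-z|$, so $|y-z|^{\alpha-n}\geq 2^{\alpha-n}|x-z|^{\alpha-n}$; integrating against $f(z)\,dz$ over the set $\{|x-z|>|x-y|\}$ gives $u(y)\geq 2^{\alpha-n}\tilde w(y)$.

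Combining the two steps and using $t\geq 1$ to raise the pointwise comparison to the $(t-1)$st power, I obtain
$$u(x)^t \;=\; t\,I_\alpha\!\bigl[f\,\tilde w^{t-1}\bigr](x)\;\leq\; t\cdot 2^{(n-\alpha)(t-1)}\,I_\alpha\!\bigl[f u^{t-1}\bigr](x),$$
which is the claimed bound with explicit constant $A=t\cdot 2^{(n-\alpha)(t-1)}$. The main technical point is justifying the differentiation and the coarea step for a general nonnegative measurable $f$; this will be handled by first approximating $f$ by bounded, compactly supported functions (for which $\phi$ is manifestly smooth), establishing the identity there, and then passing to the limit by monotone convergence on both sides. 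Alternatively, the identity follows directly by Fubini's theorem applied to the layer-cake representation $\tilde w(y)^{t-1}=(t-1)\int_0^{\tilde w(y)} s^{t-2}\,ds$, which reduces the identity to a one-dimensional rearrangement computation and avoids any differentiation.
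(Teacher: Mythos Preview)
Your argument is correct and is essentially the classical ``integration by parts'' proof from \cite[Lemma 2.1]{Ver1}, which the paper cites without reproducing: one writes $u(x)^t=-\int_0^\infty (\phi^t)'(\rho)\,d\rho$ for the radial tail $\phi$, converts this via polar coordinates into $t\,I_\alpha[f\,\tilde w^{t-1}](x)$, and then uses the triangle-inequality bound $\tilde w(y)\le 2^{n-\alpha}u(y)$ to replace $\tilde w$ by $u$. The approximation/monotone-convergence remarks you add are the right way to handle the technical justification for general $f\ge 0$.
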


\begin{proof}[Proof of Theorem \ref{Main1}]
Let  $f\in L^\infty(\mathbb{R}^n)$, $f\geq 0$, with compact support, or $f$ be a  nonnegative measurable function such that $I_\alpha f \in L^q( {\rm cap}_{\alpha, s})$, $q\geq 1$. Note that the former is a special case of the later.
To prove \eqref{capstrong2}, we may assume that $\int_{\mathbb{R}^n} f^s (I_\alpha f)^{q-s} dx<+\infty$.  Then under the condition $I_\alpha f \in L^q( {\rm cap}_{\alpha, s})$ we can check that $\|f (I_\alpha f)^{q-1}\|_{(M^{\alpha, s}_{s',s})'}<+\infty$. Indeed, for any $0\leq g\in \dot{M}^{\alpha, s}_{s',s}$, recall from \eqref{tr2} that 
$$\|g\|_{\dot{M}^{\alpha,s}_{s',s}}\simeq \sup_{K}\left(\frac{\int_{K}g(x)^{s'}dx}{\text{cap}_{\alpha,s}(K)}\right)^{1/s'},$$
where the supremum is taken over all compact sets $K\subset{\mathbb{R}}^{n}$ such that $\text{cap}_{\alpha,s}(K)\not=0$.
Thus, by H\"older's inequality,
\begin{align*}
\int_{\mathbb{R}^n}f (I_\alpha f)^{q-1} g dx &\leq \left(\int_{\mathbb{R}^n}f^s (I_\alpha f)^{q-s}  dx\right)^{\frac{1}{s}} \left(\int_{\mathbb{R}^n}g^{s'} (I_\alpha f)^{q}  dx\right)^{\frac{1}{s'}}\\
&\leq \left(\int_{\mathbb{R}^n}f^s (I_\alpha f)^{q-s}  dx\right)^{\frac{1}{s}} \left(\int_{0}^\infty \left(\int_{\{(I_\alpha f)^{q}>t\}} g^{s'} dy \right)dt  \right)^{\frac{1}{s'}}\\
&\lesssim  \left(\int_{\mathbb{R}^n}f^s (I_\alpha f)^{q-s}  dx\right)^{\frac{1}{s}} \|g\|_{M^{\alpha,s}_{s',s}} \left(\int_{\mathbb{R}^n} (I_\alpha f)^{q}  d {\rm cap}_{\alpha,s}\right)^{\frac{1}{s'}}.
\end{align*}	
This shows that 
\begin{align}\label{fIfq}
\|f (I_\alpha f)^{q-1}\|_{(\dot{M}^{\alpha, s}_{s',s})'} &\lesssim  \left(\int_{\mathbb{R}^n}f^s (I_\alpha f)^{q-s}  dx\right)^{\frac{1}{s}}  \left(\int_{\mathbb{R}^n} (I_\alpha f)^{q}  d {\rm cap}_{\alpha,s}\right)^{\frac{1}{s'}}\\
&<+\infty,\nonumber
\end{align}
as claimed.

Also, recall that the space of quasi-continuous functions $g$ such that  $\|g\|_{L^1({\rm cap}_{\alpha,s})}<+\infty$ is normable and its dual space consists of locally finite signed measures $\mu$ such that   
$$
\|\mu\|_{\dot{\mathfrak{M}}^{\alpha,s}}:= \sup_{K}\frac{|\mu|(K)}{{\rm cap}_{\alpha,s}(K)}<+\infty;
$$
see, e.g., \cite{OP1}.
Suppose for now that $f$ is a bounded function with compact support. Then by Lemma \ref{IBPL} and Hahn-Banach theorem,  we find
\begin{align}\label{Lqcap}
\int_{\mathbb{R}^n} (I_\alpha f)^q d {\rm cap}_{\alpha,s} &\leq A  \int I_\alpha[ f(I_\alpha f)^{q-1}] d {\rm cap}_{\alpha,s}\\
&\leq A \sup_{\|\mu\|_{\dot{\mathfrak{M}}^{\alpha,s}}\leq 1} \int I_\alpha[ f(I_\alpha f)^{q-1}] d|\mu|\nonumber\\
&= A \sup_{\|\mu\|_{\dot{\mathfrak{M}}^{\alpha,s}}\leq 1} \int (I_\alpha |\mu|) f (I_\alpha f)^{q-1} dx \nonumber\\
&\leq A \|f (I_\alpha f)^{q-1}\|_{(\dot{M}^{\alpha, s}_{s',s})'}  \sup_{\|\mu\|_{\dot{\mathfrak{M}}^{\alpha,s}}\leq 1}  \|I_\alpha |\mu|\|_{\dot{M}^{\alpha, s}_{s', s}} \nonumber\\
&\leq A \|f (I_\alpha f)^{q-1}\|_{(\dot{M}^{\alpha, s}_{s',s})'}.\nonumber
\end{align}
Note that in the last inequality above we used \cite[Theorem 2.1]{MV}. Now for any nonnegative measurable function $f$ such that $I_\alpha f \in L^q( {\rm cap}_{\alpha, s})$, we let 
\begin{equation}\label{FN}
f_N= f\cdot \chi_{\{|f|\leq N\}} \cdot \chi_{\{|x|\leq N\}}, \quad N=1,2, \dots
\end{equation}
Then using \eqref{Lqcap} and $q\geq 1$ we have 
$$\int_{\mathbb{R}^n} (I_\alpha f_N)^q d {\rm cap}_{\alpha,s} \leq A \|f_N (I_\alpha f_N)^{q-1}\|_{(\dot{M}^{\alpha, s}_{s', s})'}\leq A \|f (I_\alpha f)^{q-1}\|_{(\dot{M}^{\alpha, s}_{s', s})'},$$ 
which yields
\begin{equation}\label{Lqcap2}
\int_{\mathbb{R}^n} (I_\alpha f)^q d {\rm cap}_{\alpha,s} \leq A \|f (I_\alpha f)^{q-1}\|_{(\dot{M}^{\alpha, s}_{s',s})'}.
\end{equation}

At this point, combining  \eqref{fIfq} and \eqref{Lqcap2} we obtain
\begin{equation}\label{mvn}
\|f (I_\alpha f)^{q-1} \|_{(\dot{M}^{\alpha, s}_{s', s})'}\leq A \int_{\mathbb{R}^n} f^s (I_\alpha f)^{q-s}  dx.
\end{equation}

Inequality \eqref{capstrong2} now follows from   \eqref{Lqcap2} and \eqref{mvn}.

%
%Next, we let $0\leq g\in M^{\alpha, s}_{s',s}$ be such  that $\|g\|_{M^{\alpha, s}_{s',s}}\leq 1$. Then by \cite[Theorem 2.1]{MV} we have
%$$I_\alpha[I_\alpha(g^{s'})^{s'}] \leq C I_\alpha(g^{s'})\quad {\rm a.e.}$$
%With this, using  \cite[Proposition 2.7]{KV}, we can find  a nonnegative  function 
%$u\in M^{\alpha,s}_{s',s}$ such that
%$$u= I_\alpha(u^{s'}) +   \frac{g}{ M} \qquad \text{a.e.}$$ 
%for a constant $M>0$ independent of $g$ and $u$. Thus, by Lemma \ref{IBPL},
%\begin{align}
%\label{intfg}\int_{\mathbb{R}^n} f (I_\alpha f)^{q-1} g dx &= M  \int_{\mathbb{R}^n} f (I_\alpha f)^{q-1} ( u-I_\alpha(u^{s'}))  dx\\
%&= M  \int_{\mathbb{R}^n} (f (I_\alpha f)^{q-1} u- u^{s'} I_\alpha[f(I_\alpha f)^{q-1}])  dx\nonumber\\
%&\leq M  \int_{\mathbb{R}^n} (f (I_\alpha f)^{q-1} u- c\, u^{s'} (I_\alpha f)^{q})  dx\nonumber\\
%&= c\, M  \int_{\mathbb{R}^n} (I_\alpha f)^q\left( u \frac{f}{ c\, I_\alpha f}-  u^{s'}\right)  dx \nonumber\\
%&\leq c^{1-s} M s^{-s}(s-1)^{s-1}  \int_{\mathbb{R}^n} f^s (I_\alpha f)^{q-s}  dx. \nonumber
%\end{align} 
%Here we used the Young's inequality $ab - a^{s'}/s' \leq b^{s}/s$, $a,b\geq 0$, in the last inequality. Then taking the supremum over  such $g$  in \eqref{intfg}, we arrive at 
%\begin{equation}\label{mvn}
%\|f (I_\alpha f)^{q-1} \|_{(M^{\alpha, s}_{s', s})'}\leq A \int_{\mathbb{R}^n} f^s (I_\alpha f)^{q-s}  dx.
%\end{equation}
%
%Inequality \eqref{capstrong2} now follows from   \eqref{Lqcap2} and \eqref{mvn}.
%
\end{proof}

\begin{remark} The proof above 	actually shows that inequality \eqref{capstrong2} holds under the assumption $f (I_\alpha f)^{q-1}\in (\dot{M}^{\alpha, s}_{s', s})'$.
Note also that by 	\eqref{Lqcap2} we have $\int_{\mathbb{R}^n} I_\alpha g d {\rm cap}_{\alpha,s} \leq A \|g \|_{(\dot{M}^{\alpha, s}_{s',s})'}$, $g\geq 0$, and thus by 
\eqref{mvn}, it holds that
\begin{equation*}
\int_{\mathbb{R}^n} I_\alpha[f (I_\alpha f)^{q-1}] d{\rm cap}_{\alpha,s}\leq C \int_{\mathbb{R}^n} f^s (I_\alpha f)^{q-s}  dx
\end{equation*}
provided $f (I_\alpha f)^{q-1}\in (\dot{M}^{\alpha, s}_{s', s})'$, $f\geq 0$. By approximation, the condition $f (I_\alpha f)^{q-1}\in (\dot{M}^{\alpha, s}_{s', s})'$ is not needed in the case $q\geq s$.

On the other hand,	for the case $q>s$, we notice that Theorem  \ref{Main1} can be proved simply by using the equivalence (see Theorem \ref{Fnorm})
	$$\|u\|_{L^q({\rm cap}_{\alpha,s})}^q \simeq\inf\left\{ \int_{\mathbb{R}^n} g^s dx: g\geq 0, I_\alpha g\geq |u|^{\frac{q}{s}} ~ {\rm q.e.}\right\}.$$
	Indeed, then for $u=I_\alpha f$, we have $u^\frac{q}{s}\leq c I_\alpha(f (I_\alpha f)^{\frac{q}{s}-1})$, and the result immediately follows. 
	%See also Appendix in the last section for a proof using the duality of $L^{q}({\rm cap}_{\alpha,s})$.
	%For all $q>1$, one can use our functional $\beta_{\alpha,s}$ to do it, but the definition of  $\beta_{\alpha,s}$ in the RMI paper is NOT correct. In some sense the above proof is a fix to that.
\end{remark}

\section{Proof of Theorem \ref{Main2}}
\begin{proof}[Proof of Theorem \ref{Main2}]
Suppose that $1\leq q<s$. We are to show that there is some constant $C>0$ such that
\begin{align}\label{w-q}
\left(\int_{\mathbb{R}^{n}}(I_{\alpha}f)^{q}d{\rm cap}_{\alpha,s}\right)^{\frac{1}{q}}\leq C\left(\int_{\mathbb{R}^{n}}f^{s} w ^{q-s}dx\right)^{\frac{1}{s}}
\end{align}
for any weight $w$ with $\|w\|_{L^{q}({\rm cap}_{\alpha,s})}\leq 1$. Observe that  by \eqref{Lqcap2} we have
\begin{equation*}
\int_{\mathbb{R}^n} I_\alpha f d {\rm cap}_{\alpha,s} \leq A \|f \|_{(\dot{M}^{\alpha, s}_{s',s})'}.
\end{equation*}
Thus, by a characterization of the space $(\dot{M}^{\alpha,s}_{s',s})'$ in \cite{OP1} (or by  Theorems \ref{Fnorm} and \ref{barNMprime} below) we see that 
\eqref{w-q} holds in the case $q=1$. Note also that the case $q=s$ is just \eqref{CSIM}.

Suppose now that $1<q<s$. For a weight $w$ with $\|w\|_{L^{q}({\rm cap}_{\alpha,s})}\leq 1$, assume without loss of generality that 
\begin{align*}
\int_{\mathbb{R}^{n}}f^{s} w ^{q-s}dx<+\infty.
\end{align*}
 We express $f$ as 
\begin{align*}
f=a^{1-\theta}\cdot b^{\theta},
\end{align*}
where $\theta\in(0,1)$ is such that 
\begin{align*}
\frac{1}{q}=\frac{1-\theta}{s}+\frac{\theta}{1},
\end{align*}
and 
\begin{align*}
a=f\cdot w^{q(1-s)\frac{\theta}{s}},\qquad b=f\cdot w^{q(1-s)\frac{\theta-1}{s}}.
\end{align*}
Note that 
\begin{align*}
\left\| w^{q(1-s)\frac{1}{1-s}}\right\|_{L^{1}({\rm cap}_{\alpha,s})}=\int_{\mathbb{R}^{n}} w^{q}d{\rm cap}_{\alpha,s}\leq 1,
\end{align*}
and we have by H\"older's inequality that 
\begin{align*}
I_{\alpha}f=I_{\alpha}(a^{1-\theta}\cdot b^{\theta})\leq(I_{\alpha}a)^{1-\theta}(I_{\alpha}b)^{\theta}.
\end{align*}
On the other hand, we see that 
\begin{align*}
(1-\theta)\frac{q}{s}+\theta q=q\left[\frac{\theta}{1}+\frac{1}{s}(1-\theta)\right]=q\cdot\frac{1}{q}=1.
\end{align*}
As a consequence, applying H\"older's inequality again, we obtain
\begin{align*}
\int_{\mathbb{R}^{n}}(I_{\alpha}f)^{q}d{\rm cap}_{\alpha,s}&\leq\int_{\mathbb{R}^{n}}(I_{\alpha}a)^{(1-\theta)q}(I_{\alpha}b)^{\theta q}d{\rm cap}_{\alpha,s}\\
&\leq C\left(\int_{\mathbb{R}^{n}}(I_{\alpha}a)^{s}d{\rm cap}_{\alpha,s}\right)^{(1-\theta)\frac{q}{s}}\left(\int_{\mathbb{R}^{n}}(I_{\alpha}b)d{\rm cap}_{\alpha,s}\right)^{\theta q}.
\end{align*}
Therefore,
\begin{align*}
\|I_{\alpha}f\|_{L^{q}({\rm cap}_{\alpha,s})}&\leq C\|I_{\alpha}a\|_{L^{s}({\rm cap}_{\alpha,s})}^{1-\theta}\|I_{\alpha}b\|_{L^{1}({\rm cap}_{\alpha,s})}^{\theta}\\
&\leq C\|a\|_{L^{s}(\mathbb{R}^{n})}^{1-\theta}\left(\int_{\mathbb{R}^{n}}b^{s}(w^{q(1-s)})^{\frac{1}{1-s}\cdot(1-s)}dx\right)^{\frac{\theta}{s}}\\
&\leq C\left(\int_{\mathbb{R}^{n}}f^{s}(w^{q(1-s)})^{\theta}dx\right)^{\frac{1-\theta}{s}}\left(\int_{\mathbb{R}^{n}}f^{s}(w^{q(1-s)})^{\theta}dx\right)^{\frac{\theta}{s}}\\
&=C\left(\int_{\mathbb{R}^{n}}f^{s}w^{q-s}dx\right)^{\frac{1}{s}}
\end{align*}
by noting that $q(1-s)\theta=q-s$. The proof is now complete.
\end{proof}

\begin{remark}
	\rm Suppose that $I_{\alpha}f\in L^{q}({\rm cap}_{\alpha,s})$ and $f\geq 0$. Let 
	\begin{align*}
	w=\frac{I_{\alpha}f}{\|I_{\alpha}f\|_{L^{q}({\rm cap}_{\alpha,s})}}.
	\end{align*}
	Then $\|w\|_{L^{q}({\rm cap}_{\alpha,s})}=1$, and hence 
	\begin{align*}
	\|I_{\alpha}f\|_{L^{q}({\rm cap}_{\alpha,s})}&\leq C\left(\int_{\mathbb{R}^{n}}f^{s}\left(\frac{I_{\alpha}f}{\|I_{\alpha}f\|_{L^{q}({\rm cap}_{\alpha,s})}}\right)^{q-s}dx\right)^{\frac{1}{s}}\\
	&=C \frac{1}{\|I_{\alpha}f\|_{L^{q}({\rm cap}_{\alpha,s})}^{\frac{q}{s}-1}}\left(\int_{\mathbb{R}^{n}}f^{s}(I_{\alpha}f)^{q-s}dx\right)^{\frac{1}{s}}.
	\end{align*}
	Routine simplification gives 
	\begin{align*}
	\|I_{\alpha}f\|_{L^{q}({\rm cap}_{\alpha,s})}\leq C\left(\int_{\mathbb{R}^{n}}f^{s}(I_{\alpha}f)^{q-s}dx\right)^{\frac{1}{q}},
	\end{align*}
	which provides an alternative proof of Theorem \ref{Main1} for the case $1\leq q<s$.
\end{remark}

\section{Proof of Theorem \ref{KV-q}}
\begin{proof}[Proof of Theorem \ref{KV-q}]
Recall that for $1\leq q<s$, $\dot{KV}_q=\dot{KV}_q^{\alpha,s}$ is  defined as the set of all Lebesgue measurable functions $f$ in $\mathbb{R}^n$ such that    $\|f\|_{\dot{KV}_q}<+\infty$,
where 
\begin{align*}
\|f\|_{\dot{KV}_q}:=\inf\left\{\left(\int_{\mathbb{R}^{n}}h^{s}(I_{\alpha}h)^{q-s}dx\right)^{\frac{1}{q}}:h\in \dot{\widetilde{O}}_{s,q}^{\alpha,s},~h\geq|f|~{\rm a.e.}\right\}.
\end{align*}	
Note that the condition  $h\in \dot{\widetilde{O}}_{q}^{\alpha,s}$ is necessary, or else putting $h=e^{\left|\cdot\right|^{2}}\geq\chi_{B_{1}(0)}$ will result in an absurd definition.
We are to show that 
\begin{align*}
\dot{KV}_q \approx \dot{\widetilde{O}}_{q}^{\alpha,s}.
\end{align*}

The direction $\dot{KV}_q\hookrightarrow\widetilde{O}_{q}^{\alpha,s}$ is rather easy. 
Indeed, for $f\in \dot{KV}_q$ and $h\in \dot{\widetilde{O}}_{q}^{\alpha,s}$ such that $h\geq |f|$ a.e., it holds that $\|I_{\alpha}h\|_{L^{q}({\rm cap}_{\alpha,s})}<+\infty$ and
\begin{align*}
\|I_{\alpha}h\|_{L^{q}({\rm cap}_{\alpha,s})}\leq c\left(\int_{\mathbb{R}^{n}}h^{s}(I_{\alpha}h)^{q-s}dx\right)^{\frac{1}{q}}
\end{align*}
by Theorems \ref{Main1} and \ref{Main2}. Then
\begin{align*}
\|f\|_{\dot{\widetilde{O}}_{q}^{\alpha,s}}\leq \|h\|_{\dot{\widetilde{O}}_{q}^{\alpha,s}}&\leq c\left(\int_{\mathbb{R}^{n}}h^{s}\left(\frac{I_{\alpha}h}{\|I_{\alpha}h\|_{L^{q}({\rm cap}_{\alpha,s})}}\right)^{q-s}dx\right)^{\frac{1}{s}}\\
&=c\cdot\|I_{\alpha}h\|_{L^{q}({\rm cap}_{\alpha,s})}^{\frac{s-q}{s}}\left(\int_{\mathbb{R}^{n}}h^{s}(I_{\alpha}h)^{q-s}dx\right)^{\frac{1}{s}}\\
&\leq c\left(\int_{\mathbb{R}^{n}}h^{s}(I_{\alpha}h)^{q-s}dx\right)^{\frac{s-q}{sq}+\frac{1}{s}}\\
&=c\left(\int_{\mathbb{R}^{n}}h^{s}(I_{\alpha}h)^{q-s}dx\right)^{\frac{1}{q}}.
\end{align*}
Taking infimum with respect to all such $h$, one obtains the embedding $\dot{KV}_q\hookrightarrow \dot{\widetilde{O}}_{q}^{\alpha,s}$.

On the other hand, the proof of the  embedding $\dot{\widetilde{O}}_{q}^{\alpha,s}\hookrightarrow \dot{KV}_q$ is more technical. To this end, let $f\in \dot{\widetilde{O}}^{\alpha,s}_{q}$ be given. Choose some weight $w \in L^{q}({\rm cap}_{\alpha,s})$ with $\|w\|_{L^{q}({\rm cap}_{\alpha,s}}\leq 1$ and 
\begin{align*}
\left(\int_{\mathbb{R}^{n}}|f|^{s}w^{q-s}dx\right)^{\frac{1}{s}}\leq 2\cdot\|f\|_{\dot{\widetilde{O}}_{q}^{\alpha,s}}.
\end{align*}
By Theorem \ref{Fnorm} and Lemma \ref{extreme} below, there is a function $0\leq \varphi\in L^s(\mathbb{R}^n)$ such that $I_{\alpha}\varphi\geq w^{\frac{q}{s}}$ q.e. and 
\begin{align*}
\|\varphi\|_{L^{s}(\mathbb{R}^{n})}^{\frac{s}{q}} \simeq \|w\|_{L^q({\rm cap}_{\alpha,s})}.
\end{align*}
Consider $h=|f|\cdot w^{\frac{q}{s}-1}+\delta\cdot\varphi$, where $\delta>0$ will be determined later. Since $h\in L^{s}(\mathbb{R}^{n})$, we have $(I_\alpha h)^{\frac{s}{q}}\in L^q({\rm cap}_{\alpha,s})$ by \eqref{CSIM} and moreover,
$$\int_{\mathbb{R}^n} \left[h\cdot(I_{\alpha}h)^{\frac{s}{q}-1}\right]^s \left[(I_\alpha h)^{\frac{s}{q}}\right]^{q-s} dx=\int_{\mathbb{R}^n}  h^s dx<+\infty.$$
This shows that 
\begin{equation}\label{hIhO}
h\cdot(I_{\alpha}h)^{\frac{s}{q}-1}\in \dot{\widetilde{O}}_{q}^{\alpha,s},
\end{equation}
 and hence by the definition of $\dot{KV}_q$ and Lemma \ref{IBPL},
\begin{align*}
\left\|h\cdot(I_{\alpha}h)^{\frac{s}{q}-1}\right\|_{\dot{KV}_q}&\leq\left(\int_{\mathbb{R}^{n}}\left(h\cdot(I_{\alpha}h)^{\frac{s}{q}-1}\right)^{s}\left(I_{\alpha}\left(h\cdot(I_{\alpha}h)^{\frac{s}{q}-1}\right)\right)^{q-s}dx\right)^{\frac{1}{q}}\\
&\leq c\left(\int_{\mathbb{R}^{n}}h^{s}\cdot(I_{\alpha}h)^{\frac{s^{2}}{q}-s}(I_{\alpha}h)^{\frac{s}{q}\cdot(q-s)}dx\right)^{\frac{1}{q}}\\
&=c\cdot\|h\|_{L^{s}(\mathbb{R}^{n})}^{\frac{s}{q}}.
\end{align*} 
On the other hand, since $1\leq q<s$, we have 
\begin{align*}
h\cdot(I_{\alpha}h)^{\frac{s}{q}-1}&\geq |f|\cdot w^{\frac{q}{s}-1}\cdot(I_{\alpha} (\delta \cdot \varphi))^{\frac{s}{q}-1}\\
&\geq\delta^{\frac{s}{q}-1} |f|\cdot w^{\frac{q}{s}-1}(I_{\alpha}\varphi)^{\frac{s}{q}-1}\\
&\geq\delta^{\frac{s}{q}-1} |f|\cdot w^{\frac{q}{s}-1} w^{\frac{q}{s}(\frac{s}{q}-1)}\\
&=\delta^{\frac{s}{q}-1}|f|.
\end{align*}
Hence,
\begin{align*}
\|f\|_{\dot{KV}_q}&\leq\delta^{1-\frac{s}{q}}\left\|h\cdot(I_{\alpha}h)^{\frac{s}{q}-1}\right\|_{\dot{KV}_q}\\
&\leq c\cdot\delta^{1-\frac{s}{q}}\cdot\|h\|_{L^{s}(\mathbb{R}^{n})}^{\frac{s}{q}}\\
&\leq c\cdot\delta^{1-\frac{s}{q}}\left[\left(\int_{\mathbb{R}^{n}}|f|^{s} w^{q-s}dx\right)^{\frac{1}{s}\cdot\frac{s}{q}}+\delta^{\frac{s}{q}}\cdot\|\varphi\|_{L^{s}(\mathbb{R}^{n})}^{\frac{s}{q}}\right]\\
&\leq c\cdot\delta^{1-\frac{s}{q}}\left(\|f\|_{\dot{\widetilde{O}}_{q}^{\alpha,s}}^{\frac{s}{q}}+\delta^{\frac{s}{q}}\right).
\end{align*}
Now we simply let $\delta=\|f\|_{\dot{\widetilde{O}}_{q}^{\alpha,s}}$, then the embedding $\dot{\widetilde{O}}_{q}^{\alpha,s}\hookrightarrow \dot{KV}_q$ holds.
\end{proof}

\section{The spaces $\dot{M}^{\alpha,s}_{p, r}$ and $(\dot{M}^{\alpha,s}_{p, r})'$}

It is easy to see from the definition that the space $\dot{M}^{\alpha,s}_{p, r}$, $s>1$, $0<\alpha< n/s$, $p>1$, and $0<r\leq s$, is a Banach function space in $\mathbb{R}^n$ and so is its K\"othe dual space $(\dot{M}^{\alpha,s}_{p, r})'$. Moreover, we have the following results in which the case $r=s$ was considered earlier in \cite{OP1}.

\begin{proposition}\label{M'*} Let $s>1$, $0<\alpha< n/s$, $p>1$, and $0<r\leq s$. The space $\dot{M}^{\alpha,s}_{p, r}$ is $p$-convex and thus $(\dot{M}^{\alpha,s}_{p, r})'$ is $p'$-concave. Then 
	\begin{align*}
	[(\dot{M}^{\alpha,s}_{p, r})']^*=\dot{M}^{\alpha,s}_{p,r}=(\dot{M}^{\alpha,s}_{p,r})''
	\end{align*}
	holds isometrically. Moreover, $(\dot{M}^{\alpha,s}_{p,r})'$ has an absolutely continuous norm and  is a separable Banach  space.
\end{proposition}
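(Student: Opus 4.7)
The plan is to establish the four assertions in sequence: first the $p$-convexity of $\dot{M}^{\alpha,s}_{p,r}$, from which the $p'$-concavity of the K\"othe dual is automatic by Lindenstrauss--Tzafriri duality; then the absolute continuity of the norm on $(\dot{M}^{\alpha,s}_{p,r})'$, from which both the identification $[(\dot{M}^{\alpha,s}_{p,r})']^* = \dot{M}^{\alpha,s}_{p,r}$ and the separability follow by general Banach function space theory.

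For the first step, I would verify $p$-convexity directly from the definition with constant $1$: given $f_1, \ldots, f_m \in \dot{M}^{\alpha,s}_{p,r}$ and $0 \leq h \in L^s(\mathbb{R}^n)$, applying the defining trace inequality to each $f_i$ and summing yields
\begin{align*}
\int_{\mathbb{R}^n} (I_\alpha h)^r \sum_{i=1}^m |f_i|^p \, dx \leq \Big(\sum_{i=1}^m \|f_i\|_{\dot{M}^{\alpha,s}_{p,r}}^p \Big) \|h\|_{L^s}^r,
\end{align*}
so that $\big\| (\sum_i |f_i|^p)^{1/p} \big\|_{\dot{M}^{\alpha,s}_{p,r}} \leq \big(\sum_i \|f_i\|_{\dot{M}^{\alpha,s}_{p,r}}^p\big)^{1/p}$. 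The $p'$-concavity of the K\"othe dual, again with constant $1$, is then immediate from the classical duality between $p$-convexity and $q$-concavity in Banach lattices.

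The core of the argument is the second step, the absolute continuity of the norm on $(\dot{M}^{\alpha,s}_{p,r})'$. Here I would invoke the isomorphic characterization $(\dot{M}^{\alpha,s}_{p,r})' \approx \dot{\widetilde{N}}^{\alpha,s}_{p', s/r}$ from Theorem \ref{Main3}, which provides a concrete factorization: every $g \in (\dot{M}^{\alpha,s}_{p,r})'$ admits a q.e.\ defined weight $w$ with $\|w\|_{L^{s/r}({\rm cap}_{\alpha,s})} \leq 1$ such that $\int_{\{g\neq 0\}} |g|^{p'} w^{1-p'}\, dx$ is finite. For a sequence $g_k$ with $|g_k| \leq |g|$ and $g_k \to 0$ a.e., dominated convergence in the weighted Lebesgue space $L^{p'}(w^{1-p'}\, dx)$ (valid because $p' < \infty$) forces $\int |g_k|^{p'} w^{1-p'}\, dx \to 0$, hence $\|g_k\|_{\dot{\widetilde{N}}^{\alpha,s}_{p', s/r}} \to 0$, and therefore $\|g_k\|_{(\dot{M}^{\alpha,s}_{p,r})'} \to 0$. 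This verifies the absolute continuity criterion recalled after Theorem \ref{X*X'}.

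Once absolute continuity is in hand, the rest is soft. Theorem \ref{X*X'} applied to $X = (\dot{M}^{\alpha,s}_{p,r})'$ yields $[(\dot{M}^{\alpha,s}_{p,r})']^* = [(\dot{M}^{\alpha,s}_{p,r})']'$ isometrically, while Theorem \ref{XX''} gives $[(\dot{M}^{\alpha,s}_{p,r})']' = (\dot{M}^{\alpha,s}_{p,r})'' = \dot{M}^{\alpha,s}_{p,r}$ isometrically, proving the dual formula. Separability then follows from a standard argument for Banach function spaces on $\mathbb{R}^n$ with AC norm: the AC property controls both tails at infinity and ``thin'' sets, reducing to bounded compactly supported functions, which in turn are approximated by simple functions supported on rational cubes with rational coefficients, yielding a countable dense subset. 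The main obstacle is clearly the second step: without the factorization through a weighted $L^{p'}$ space afforded by Theorem \ref{Main3}, controlling the supremum defining $\|g_k\|_{(\dot{M}^{\alpha,s}_{p,r})'}$ uniformly over the unit ball of $\dot{M}^{\alpha,s}_{p,r}$ appears intractable, so the logical strategy is to first establish this characterization (extending the $r=s$ approach of \cite{OP1} to all $0 < r \leq s$) and then derive the present proposition as a corollary.
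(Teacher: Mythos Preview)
Your proof is correct, but the route you take for the absolute continuity step differs substantially from the paper's, and in a way that makes your argument heavier than necessary.

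For step 3 (absolute continuity of the norm on $(\dot{M}^{\alpha,s}_{p,r})'$), the paper does \emph{not} invoke Theorem \ref{Main3}. Instead, having already established $p'$-concavity of $(\dot{M}^{\alpha,s}_{p,r})'$ with constant $1$ (which the paper proves by hand via the identification $\ell^{p'}(X^*)=[\ell^p(X)]^*$ and H\"older's inequality, rather than quoting the abstract duality), it simply cites \cite[Proposition 1.a.7]{LT}: any Banach lattice that is $q$-concave for some finite $q$ has order-continuous norm, hence absolutely continuous norm in the Banach function space sense. This is a one-line conclusion once $p'$-concavity is in hand.

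Your detour through Theorem \ref{Main3} works, and there is no circularity in the paper's logical structure (the proof of Theorem \ref{Main3} via Theorems \ref{3spa} and \ref{barNMprime} does not use Proposition \ref{M'*}), so the reordering you propose is legitimate. But it is overkill: you already have $p'$-concavity from step 2, and that alone suffices for AC norm. The factorization into a weighted $L^{p'}$ space is a much deeper fact than what is needed here, and your own remark that ``without the factorization \ldots\ controlling the supremum \ldots\ appears intractable'' overlooks the standard lattice route. The payoff of the paper's approach is that Proposition \ref{M'*} becomes entirely self-contained and elementary, independent of the main structure theorems; the payoff of your approach is a more explicit, hands-on verification via dominated convergence, at the cost of front-loading the hard analysis.
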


\begin{proof}
	We first show that $\dot{M}_{p,r}^{\alpha,s}$ is $p$-convex. Let $\{f_{i}\}_{i=1}^{m}\subseteq \dot{M}_{p,r}^{\alpha,s}$, $m\in\mathbb{N}$. Then
	\begin{align*}
	\left(\int_{\mathbb{R}^{n}}(I_{\alpha}h)^{r}\left(\left(\sum_{i=1}^{m}|f_{i}|^{p}\right)^{\frac{1}{p}}\right)^{p}dx\right)^{\frac{1}{p}}&=\left(\sum_{i=1}^{m}\int_{\mathbb{R}^{n}}(I_{\alpha}h)^{r}|f_{i}|^{p}dx\right)^{\frac{1}{p}}\\
	&\leq\left(\sum_{i=1}^{m}\|f_{i}\|_{\dot{M}_{p,r}^{\alpha,s}}^{p}\cdot\|h\|_{L^{s}(\mathbb{R}^{n})}^{r}\right)^{\frac{1}{p}}\\
	&=\left(\sum_{i=1}^{m}\|f_{i}\|_{\dot{M}_{p,r}^{\alpha,s}}^{p}\right)^{\frac{1}{p}}\|h\|_{L^{s}(\mathbb{R}^{n})}^{\frac{r}{p}}
	\end{align*}
	holds for all nonnegative $h\in L^{s}(\mathbb{R}^{n})$. As a result, we have 
	\begin{align*}
	\left\|\left(\sum_{i=1}^{m}|f_{i}|^{p}\right)^{\frac{1}{p}}\right\|_{\dot{M}_{p,r}^{\alpha,s}}\leq\left(\sum_{i=1}^{m}\|f_{i}\|_{\dot{M}_{p,r}^{\alpha,s}}^{p}\right)^{\frac{1}{p}},
	\end{align*}
	which justifies the $p$-convexity of $\dot{M}_{p,r}^{\alpha,s}$ with $p$-convexity constant $1$. 
	
	Moreover, for any $\{g_{i}\}_{i=1}^{m}\subseteq(\dot{M}_{p,r}^{\alpha,s})'$, $m\in\mathbb{N}$, since $\ell^{p'}((\dot{M}_{p,r}^{\alpha,s})^{\ast})=[\ell^{p}(\dot{M}_{p,r}^{\alpha,s})]^{\ast}$, we have 
	\begin{align*}
	&\left(\sum_{i=1}^{m}\|g_{i}\|_{(\dot{M}_{p,r}^{\alpha,s})'}^{p'}\right)^{\frac{1}{p'}}=\left(\sum_{i=1}^{m}\|g_{i}\|_{(\dot{M}_{p,r}^{\alpha,s})^{\ast}}^{p'}\right)^{\frac{1}{p'}}\\
	&=\sup_{\|\{f_{i}\}_{i=1}^{m}\|_{\ell^{p}(\dot{M}_{p,r}^{\alpha,s})}\leq 1}\sum_{i=1}^{m}\int_{\mathbb{R}^{n}}f_{i}g_{i}dx\\
	&\leq\sup_{\|\{f_{i}\}_{i=1}^{m}\|_{\ell^{p}(\dot{M}_{p,r}^{\alpha,s})}\leq 1}\int_{\mathbb{R}^{n}}\left(\sum_{i=1}^{m}|f_{i}|^{p}\right)^{\frac{1}{p}}\left(\sum_{i=1}^{m}|g_{i}|^{p'}\right)^{\frac{1}{p'}}dx\\
	&=\sup_{\|\{f_{i}\}_{i=1}^{m}\|_{\ell^{p}(\dot{M}_{p,r}^{\alpha,s})}\leq 1}\left\|\left(\sum_{i=1}^{m}|f_{i}|^{p}\right)^{\frac{1}{p}}\right\|_{\dot{M}_{p,r}^{\alpha,s}}\left\|\left(\sum_{i=1}^{m}|g_{i}|^{p'}\right)^{\frac{1}{p'}}\right\|_{(\dot{M}_{p,r}^{\alpha,s})'}\\
	&\leq\sup_{\|\{f_{i}\}_{i=1}^{m}\|_{\ell^{p}(\dot{M}_{p,r}^{\alpha,s})}\leq 1}\left(\sum_{i=1}^{m}\|f_{i}\|_{\dot{M}_{p,r}^{\alpha,s}}^{p}\right)^{\frac{1}{p}}\left\|\left(\sum_{i=1}^{m}|g_{i}|^{p'}\right)^{\frac{1}{p'}}\right\|_{(\dot{M}_{p,r}^{\alpha,s})'}.
%	&\leq\left\|\left(\sum_{i=1}^{m}|g_{i}|^{p'}\right)^{\frac{1}{p'}}\right\|_{(M_{p,r}^{\alpha,s})'}.
	\end{align*}
	We conclude that $(\dot{M}_{p,r}^{\alpha,s})'$ is $p'$-concave with $p'$-concavity constant $1$, i.e.,
	\begin{align*}
	\left(\sum_{i=1}^{m}\|g_{i}\|_{(\dot{M}_{p,r}^{\alpha,s})'}^{p'}\right)^{\frac{1}{p'}}\leq\left\|\left(\sum_{i=1}^{m}|g_{i}|^{p'}\right)^{\frac{1}{p'}}\right\|_{(\dot{M}_{p,r}^{\alpha,s})'}.
	\end{align*}
	Hence the space $(\dot{M}_{p,r}^{\alpha,s})'$ has an absolutely continuous norm (see \cite[Proposition 1.a.7]{LT}) and by Theorems \ref{XX''} and \ref{X*X'}, we have
	\begin{align*}
	[(\dot{M}^{\alpha,s}_{p, r})']^*=\dot{M}^{\alpha,s}_{p,r}=(\dot{M}^{\alpha,s}_{p,r})''.
	\end{align*}
	Moreover, $(\dot{M}_{p,r}^{\alpha,s})'$ is a separable Banach space (see \cite{Lux}).
\end{proof}

The following theorem provides an explicit norm for the space $L^q({\rm cap}_{\alpha,s})$ when $q\geq 1$.

\begin{theorem}\label{Fnorm} Suppose that $s>1$, $0<\alpha<\frac{n}{s}$, and $0<r\leq s$. Let $\dot{F}=\dot{F}^{\alpha,s}_r$ be the space of ${\rm cap}_{\alpha,s}$ q.e. defined functions $u:\mathbb{R}^n\rightarrow [-\infty,\infty]$ such that 
	$\|u\|_{\dot{F}}<+\infty$, where
	$$\|u\|_{\dot{F}}:=\inf\{ \|f\|_{L^s(\mathbb{R}^{n})}^r: f\geq 0, \, I_{\alpha} f\geq |u|^{\frac1r} {\rm ~q.e.}\}.$$	
	The convention here is that  $\inf \emptyset :=+\infty$. Then $\left\|\cdot\right\|_{\dot{F}}$ is a norm for $\dot{F}$ and the normed space $(\dot{F}, \left\|\cdot\right\|_{\dot{F}})$ is isomorphic to  $L^{\frac{s}{r}}({\rm cap}_{\alpha,s})$.
\end{theorem}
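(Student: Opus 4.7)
My plan is to prove the isomorphism by establishing the two-sided equivalence $\|u\|_{L^{s/r}({\rm cap}_{\alpha,s})} \simeq \|u\|_{\dot{F}}$ and then to handle the norm axioms separately. The upper bound $\|u\|_{L^{s/r}({\rm cap}_{\alpha,s})} \leq A^{r/s} \|u\|_{\dot{F}}$ is immediate from Maz'ya's capacitary strong type inequality \eqref{CSIM}: whenever $I_\alpha f \geq |u|^{1/r}$ q.e., raising to the $s$-th power and integrating against ${\rm cap}_{\alpha,s}$ gives $\int |u|^{s/r}\, d{\rm cap}_{\alpha,s} \leq A \int f^s\,dx$, and taking the $(r/s)$-th power and the infimum over admissible $f$ yields the claim.

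\textbf{Hard direction.} For $\|u\|_{\dot{F}} \leq C \|u\|_{L^{s/r}({\rm cap}_{\alpha,s})}$, I set $v := |u|^{1/r}$ and assume $\int v^s\,d{\rm cap}_{\alpha,s} < \infty$. I aim to produce $f \geq 0$ in $L^s(\mathbb{R}^n)$ with $I_\alpha f \geq c_0 v$ q.e.\ and $\|f\|_{L^s}^s \lesssim \int v^s\,d{\rm cap}_{\alpha,s}$, after which the normalization $c_0$ can be absorbed into the constant. Write $v \lesssim \sum_k 2^k \chi_{E_k}$ with $E_k := \{v > 2^k\}$, so that $\int v^s\,d{\rm cap}_{\alpha,s} \simeq \sum_k 2^{ks}\,{\rm cap}_{\alpha,s}(E_k)$. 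For each $k$ let $\mu_k$ be the capacitary (equilibrium) measure of $E_k$, form the nonlinear aggregate $\mu := \sum_k 2^{k(s-1)} \mu_k$, and set $f := (I_\alpha \mu)^{s'-1}$. The identity $(s-1)(s'-1) = 1$ together with the defining pointwise bound $I_\alpha[(I_\alpha \mu_k)^{s'-1}] \geq c_0$ q.e.\ on $E_k$ yields $I_\alpha f \geq c_0\cdot 2^k$ q.e.\ on each $E_k$, whence $I_\alpha f \gtrsim v$ q.e. For the $L^s$ bound, use the algebraic identity $\|f\|_{L^s}^s = \int (I_\alpha \mu)^{s'}\,dx$, which by Wolff's inequality is comparable to the nonlinear Wolff-potential energy of $\mu$; this energy in turn is quasi-additive over the dyadic capacitary pieces $\mu_k$, giving the sharp bound $\sum_k 2^{ks}\,{\rm cap}_{\alpha,s}(E_k)$ and closing the estimate.

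\textbf{Norm axioms and main obstacle.} Positivity and positive homogeneity of $\|\cdot\|_{\dot{F}}$ are immediate from the definition. Subadditivity for $r=1$ is also direct: from $I_\alpha(f_1+f_2) \geq |u_1|+|u_2| \geq |u_1+u_2|$ and Minkowski in $L^s$. For general $r \in (0,s]$, I combine the equivalence above with the normability of $L^{s/r}({\rm cap}_{\alpha,s})$ established in \cite{OP1} and an elementary convexity comparison between admissible $f$'s to deduce the triangle inequality for $\|\cdot\|_{\dot{F}}$ itself. The main technical obstacle is clearly the hard direction above: the naive Minkowski bound applied to $\sum_k 2^k g_k$ (where $g_k$ is any near-extremal function for $E_k$) overshoots the target by a polynomial factor, and it is the nonlinear aggregation performed at the level of the capacitary \emph{measures} $\mu_k$, together with Wolff's potential-energy identity, that recovers the sharp $L^s$ growth $\sum_k 2^{ks}\,{\rm cap}_{\alpha,s}(E_k)$.
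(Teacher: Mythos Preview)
Your easy direction is fine and matches the paper. There are, however, two genuine gaps.

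\textbf{Triangle inequality.} The theorem asserts that $\|\cdot\|_{\dot{F}}$ is an honest norm, i.e., the triangle inequality holds \emph{exactly}, not up to a constant. Your plan---deduce it from the two-sided equivalence plus normability of $L^{s/r}({\rm cap}_{\alpha,s})$ from \cite{OP1}, augmented by an unspecified ``convexity comparison''---would at best show $\|\cdot\|_{\dot{F}}$ is equivalent to a norm. The paper gives a direct, self-contained argument: it shows the sublevel set $B=\{u: \exists\, f\geq 0,\ \|f\|_{L^s}<1,\ I_\alpha f\geq |u|^{1/r}\ \text{q.e.}\}$ is convex by taking $g=(\lambda f_1^s+(1-\lambda)f_2^s)^{1/s}$ and using the \emph{reverse} Minkowski inequality to get $(I_\alpha g)^s\geq \lambda(I_\alpha f_1)^s+(1-\lambda)(I_\alpha f_2)^s$, followed by convexity of $t\mapsto t^{s/r}$ (here $s/r\geq 1$ is used). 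Then $\|u\|_{\dot F}=\inf\{t>0:|u|/t\in B\}$ is a Minkowski functional, hence sub-additive.

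\textbf{Hard direction.} Your nonlinear aggregation $\mu=\sum_k 2^{k(s-1)}\mu_k$ with $f=(I_\alpha\mu)^{s'-1}$ gets the pointwise lower bound right, but the claimed energy estimate is unjustified: ``quasi-additivity of the Wolff energy over the dyadic capacitary pieces $\mu_k$'' is exactly the nontrivial point, and for nested superlevel sets $E_k=\{v>2^k\}$ the cross terms $\int W_{\alpha,s}\mu_k\,d\mu_j$ do not obviously sum to the sharp bound (the naive estimate using $W_{\alpha,s}\mu_k\lesssim 1$ and $\mu_j(\mathbb{R}^n)={\rm cap}_{\alpha,s}(E_j)$ diverges). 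The paper sidesteps all of this with a one-line trick you overlooked: decompose over \emph{disjoint} annuli $E_i=\{2^i\leq |u|<2^{i+1}\}$, pick near-extremal $f_i$ for $|u|\chi_{E_i}$, and set $f=\sup_i f_i$. Disjointness gives $I_\alpha f\geq I_\alpha f_i\geq |u|^{1/r}$ q.e.\ on each $E_i$, while the elementary bound $(\sup_i f_i)^s\leq \sum_i f_i^s$ (this is $\ell^\infty\hookrightarrow\ell^s$, not Minkowski) yields $\|f\|_{L^s}^s\leq \sum_i\|f_i\|_{L^s}^s\lesssim \sum_i 2^{is/r}{\rm cap}_{\alpha,s}(E_i)$ immediately. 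No capacitary measures, no Wolff inequality, no quasi-additivity needed.
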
	

\begin{proof} Let $B$ be the set of all ${\rm cap}_{\alpha,s}$ q.e. defined functions $u:\mathbb{R}^n\rightarrow [-\infty,\infty]$ such that there exists an $f\in L^s(\mathbb{R}^n)$, $f\geq 0$, $\|f\|_{L^s} < 1$, such that $I_{\alpha} f\geq |u|^{\frac1r}$ q.e. We claim that $B$ is convex. Indeed, let $u_1, u_2\in B$ and $f_1, f_2\geq 0$ be such that $\|f_i\|_{L^s}<1$ and 
	$(I_\alpha  f_i)^r\geq |u_i|$ q.e. for all $i=1,2$. For any $\lambda\in (0,1)$, we set $g=\left(\lambda f_1^s +(1-\lambda) f_2^s\right)^{1/s}$. Then 
	$$\|g\|_{L^s(\mathbb{R}^{n})}^s=\lambda\|f_1\|_{L^s(\mathbb{R}^{n})}^s + (1-\lambda)\|f_2\|_{L^s(\mathbb{R}^{n})}^s<1.$$
	
	Moreover, by reverse Minkowski inequality, 
	\begin{align*}
	(I_\alpha  g)^s &=\gamma(n,\alpha) \left[\int_{\mathbb{R}^n} \left(\lambda f_1(y)^s +(1-\lambda) f_2(y)^s\right)^{1/s} |\cdot-y|^{\alpha-n} dy\right]^{s}\\
	&\geq \gamma(n,\alpha) \left[\int_{\mathbb{R}^n} \lambda^{1/s} f_1(y)  |\cdot-y|^{\alpha-n} dy\right]^{s} \\
	&\quad + \gamma(n,\alpha) \left[\int_{\mathbb{R}^n} (1-\lambda)^{1/s} f_2(y)  |x-y|^{\alpha-n} dy\right]^{s}\\
	&= \lambda (I_\alpha  f_1)^s +  (1-\lambda) (I_\alpha  f_2)^s.
	\end{align*}
	
	Thus, as $s/r\geq 1$, 
	\begin{align*}
	(I_\alpha  g)^s &\geq  \lambda |u_1|^{s/r} +  (1-\lambda) |u_2|^{s/r}\geq (\lambda |u_1| +  (1-\lambda) |u_2|)^{\frac{s}{r}}\quad {\rm q.e.}
	\end{align*}
	This gives $\lambda u_1 +  (1-\lambda) u_2 \in B$, and thus $B$ is convex as claimed.
	
	Next, we observe that 
	\begin{equation}\label{min-norm}
	\|u\|_{\dot{F}}=\inf\{ t>0: |u|/t\in B\} \qquad \forall u\in \dot{F}.
	\end{equation}
	Indeed, if $u\in \dot{F}$ and $t>0$ is such that $|u|/t\in B$, then there is a nonnegative $f\in L^s$ such that $\|f\|_{L^s(\mathbb{R}^{n})}<1$ and $I_\alpha  f\geq (|u|/t)^{\frac{1}{r}}$ q.e. Thus,
	$I_\alpha  (t^{\frac1r}f)\geq |u|^{\frac{1}{r}}$ q.e. and hence,
	$$\|u\|_{\dot{F}}\leq \|t^{\frac1r} f\|_{L^s(\mathbb{R}^{n})}^r= t \|f\|_{L^s(\mathbb{R}^{n})}^r < t.$$ 
	This implies that $$\|u\|_{\dot{F}}\leq \inf\{ t>0: |u|/t\in B\}.$$
	Conversely, let $u\in \dot{F}$ and $f\in L^s(\mathbb{R}^{n})$, $f\geq 0$, be such that  	$I_\alpha f \geq |u|^{\frac{1}{r}}$ q.e. 
	For any $\epsilon>0$, set $g=f/(\|f\|_{L^s(\mathbb{R}^{n})}+\epsilon)$. We have $\|g\|_{L^s(\mathbb{R}^{n})}<1$ and moreover,
	\begin{align*}
	I_\alpha  g &= \frac{1}{\|f\|_{L^s(\mathbb{R}^{n})}+\epsilon}I_\alpha f\geq \frac{|u|^{1/r}}{\|f\|_{L^s(\mathbb{R}^{n})}+\epsilon}\\
	&= \left[\frac{|u|}{(\|f\|_{L^s(\mathbb{R}^{n})}+\epsilon)^r}\right]^{1/r} = \left[\frac{|u|}{t_0}\right]^{1/r} {\rm ~ q.e.},
	\end{align*}
	where $t_0=(\|f\|_{L^s(\mathbb{R}^{n})}+\epsilon)^r$. Thus $|u|/t_0\in B$ and we get 
	$$\inf\{ t>0: |u|/t\in B\} \leq t_0 = (\|f\|_{L^s(\mathbb{R}^{n})}+\epsilon)^r.$$
	Letting $\epsilon \rightarrow 0$, we find 
	$$\inf\{ t>0: |u|/t\in B\}  \leq \|f\|_{L^s(\mathbb{R}^{n})}^r,$$
	which yields 
	$$\inf\{ t>0: |u|/t\in B\}  \leq \|u\|_{\dot{F}}.$$
	
	As $B$ is convex, it can be seen from \eqref{min-norm} that $\left\|\cdot\right\|_{\dot{F}}$ is sub-additive on $\dot{F}$. That $\|t u\|_{\dot{F}} =|t| \|u\|_{\dot{F}}$ is clear for all $u\in \dot{F}$. Moreover, 
	by \cite[Proposition 2.3.8]{AH} we find that $\|u\|_{\dot{F}}=0$ implies that $u=0$ q.e. Thus, we conclude that $\left\|\cdot\right\|_{\dot{F}}$ is a norm for $\dot{F}$. 
	
	Our next task is to show that 
	\begin{equation}\label{LF}
	\|u\|_{L^{s/r}({\rm cap}_{\alpha,s})} \leq C \|u\|_{\dot{F}}
	\end{equation}
	for all $u\in \dot{F}$. To this end, let $u\in \dot{F}$ and $f\in L^s, f\geq 0$, be such that  $I_\alpha  f\geq |u|^{1/r}$ q.e. We have 
	\begin{align*}
	\|u\|_{L^{s/r}({\rm cap}_{\alpha, s})}^{s/r}&=\int_0^\infty {\rm cap}_{\alpha,s}(\{|u|^{s/r}>t\}) dt\\
	&\leq \int_0^\infty {\rm cap}_{\alpha,s}(\{(I_\alpha f)^s >t\}) dt\\
	&\leq C \|f\|_{L^s(\mathbb{R}^{n})}^s
	\end{align*}
	by the capacitary strong type inequality \eqref{CSIM}. Now taking the infimum over such $f$, we get the bound \eqref{LF}.
	
	Finally, we show the converse to  \eqref{LF}, i.e.,
	\begin{equation}\label{FL}
	\|u\|_{\dot{F}}\leq D \|u\|_{L^{s/r}({\rm cap}_{\alpha,s})} 
	\end{equation}
	for any $u\in L^{s/r}({\rm cap}_{\alpha,s})$. For such $u$, we write 
	$$|u|= \sum_{i\in\mathbb{Z}} |u|\chi_{E_i},$$
	where $E_i= \{2^i\leq |u| < 2^{i+1}\}$. For any $i\in\mathbb{Z}$ and $\epsilon>0$, there exists $f_i\in L^s(\mathbb{R}^{n})$, $f_i\geq0$,  such that $I_\alpha  f_i \geq (|u| \chi_{E_i})^{\frac1r}$ q.e. and 
	$$\|f_i\|_{L^s(\mathbb{R}^{n})}^s< \|u\chi_{E_i}\|_{\dot{F}}^{\frac{s}{r}}+ \epsilon \, 2^{-|i|}.$$
	Let $f(x)=\sup_{i\in\mathbb{Z}} f_i(x)$. We have $f^s \leq \sum_{i\in\mathbb{Z}} f_i^s$, and thus
	\begin{align*}
	\|f\|_{L^s}^s &\leq \int_{\mathbb{R}^n} \sum_{i\in\mathbb{Z}} f_i^s dx=\sum_{i\in\mathbb{Z}} \int_{\mathbb{R}^n} f_i^s dx\\
	&< \sum_{i\in\mathbb{Z}} \|u\chi_{E_i}\|_{\dot{F}}^{\frac{s}{r}} + \sum_{i\in\mathbb{Z}} \epsilon \, 2^{-|i|} = \sum_{i\in\mathbb{Z}} \|u\chi_{E_i}\|_{\dot{F}}^{\frac{s}{r}} + 3 \epsilon. 
	\end{align*}
	Moreover, $I_\alpha  f\geq |u|^{\frac 1 r}$ q.e. because for q.e. $x\in E_i$, we have
	$$I_\alpha  f(x) \geq I_\alpha  f_i(x) \geq \left[|u(x)|\chi_{E_i}(x) \right]^{\frac 1 r}= |u(x)|^{\frac 1 r}.$$
	These bounds imply that 
	\begin{align*}
	\|u\|_{\dot{F}}^{\frac{s}{r}} &\leq \|f\|_{L^s(\mathbb{R}^{n})}^s < \sum_{i\in\mathbb{Z}} \|u\chi_{E_i}\|_{\dot{F}}^{\frac{s}{r}} +3\epsilon\\
	&\leq \sum_{i\in\mathbb{Z}} \|2^{i+1}\chi_{E_i}\|_{\dot{F}}^{\frac{s}{r}} +3\epsilon\\
	&= \sum_{i\in\mathbb{Z}} 2^{(i+1)s/r}  {\rm cap}_{\alpha,s}(E_i)+3\epsilon\\
	&\leq C  \sum_{i\in\mathbb{Z}} \int_{2^{i-1}}^{2^i} t^{s/r}  {\rm cap}_{\alpha,s}(\{|u|>t\}) \frac{dt}{t} +3\epsilon\\
	&\leq C \|u\|_{L^{s/r}({\rm cap}_{\alpha,s})}^{s/r} + 3\epsilon.
	\end{align*}
	Now, letting $\epsilon\rightarrow 0$, we obtain \eqref{FL} as desired.
\end{proof}

\begin{lemma}\label{extreme} For any $u\in \dot{F}$, where $\dot{F}=\dot{F}^{\alpha, s}_r$ is as in Theorem \ref{Fnorm}, there exists a unique nonnegative $f\in L^s(\mathbb{R}^n), $ such that 
$I_\alpha f\geq u^{\frac1r}$ q.e. and 
$$\|u\|_{\dot{F}}= \|f\|_{L^s(\mathbb{R}^n)}^r.$$	
\end{lemma}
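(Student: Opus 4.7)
The plan is to apply the direct method in the calculus of variations to the constraint set
$$C(u) := \{f \in L^s(\RR^n): f\geq 0, \ I_\alpha f \geq |u|^{1/r} \text{ q.e.}\}.$$
Because $u \in \dot{F}$ the set $C(u)$ is nonempty, and linearity of $I_\alpha$ together with the stability of q.e.\ inequalities under convex combinations shows that $C(u)$ is convex. By definition $\|u\|_{\dot{F}} = \inf_{f \in C(u)} \|f\|_{L^s(\RR^n)}^r$, so it suffices to prove that this infimum is attained by a unique element of $C(u)$.

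The key step is to show that $C(u)$ is closed in the norm topology of $L^s(\RR^n)$. Given $f_n \in C(u)$ with $f_n \to f$ in $L^s$, one certainly has $f \geq 0$ a.e.; moreover, from the pointwise bound $|I_\alpha f_n - I_\alpha f| \leq I_\alpha|f_n - f|$ and the Maz'ya capacitary strong type inequality \eqref{CSIM} one gets
$$\int_{\RR^n} |I_\alpha f_n - I_\alpha f|^s\, d{\rm cap}_{\alpha,s} \leq A \int_{\RR^n} |f_n - f|^s\, dx \longrightarrow 0.$$
A Chebyshev/Borel--Cantelli argument based on the countable subadditivity of ${\rm cap}_{\alpha,s}$ then yields a subsequence along which $I_\alpha f_n \to I_\alpha f$ quasi-everywhere, so that $I_\alpha f \geq |u|^{1/r}$ q.e.\ and $f \in C(u)$. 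Since $L^s(\RR^n)$ is reflexive for $s > 1$, this convex norm-closed set is also weakly closed.

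Existence of a minimizer now follows by a standard argument: a minimizing sequence $\{f_n\} \subset C(u)$ with $\|f_n\|_{L^s}^r \to \|u\|_{\dot{F}}$ is bounded in $L^s$, hence has a weakly convergent subsequence $f_n \rightharpoonup f$; the weak limit lies in $C(u)$ by weak closure, and weak lower semicontinuity of the $L^s$-norm gives $\|f\|_{L^s}^r \leq \|u\|_{\dot{F}}$, matching the reverse inequality that holds by definition. For uniqueness, I use the strict (in fact uniform) convexity of $L^s$ for $s > 1$: if $f_1, f_2$ were distinct minimizers, then $(f_1+f_2)/2 \in C(u)$ would satisfy $\|(f_1+f_2)/2\|_{L^s} \geq \|u\|_{\dot{F}}^{1/r}$, whereas strict convexity forces $\|(f_1+f_2)/2\|_{L^s} < \|u\|_{\dot{F}}^{1/r}$ unless $f_1 = f_2$ a.e.

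The chief subtlety is the closure step, namely upgrading an a.e.\ validity of the constraint inequality in the limit to a q.e.\ validity. This is precisely where \eqref{CSIM} is indispensable: without it, strong $L^s$-convergence of $\{f_n\}$ would give only an a.e.\ lower bound on $I_\alpha f$, which is insufficient to place $f$ in $C(u)$. Once $L^s$-convergence is promoted to ${\rm cap}_{\alpha,s}$-convergence via \eqref{CSIM}, the remainder of the proof is an entirely routine application of the direct method plus strict convexity.
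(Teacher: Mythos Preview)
Your proof is correct and follows essentially the same route as the paper: both arguments show that the constraint set $C(u)$ is a nonempty, convex, norm-closed subset of $L^s(\RR^n)$ and then invoke the (uniform/strict) convexity of $L^s$ to obtain a unique minimizer. The paper is simply terser---it cites \cite[Proposition~2.3.9]{AH} for the closure step (which is precisely the capacitary-convergence argument you spelled out via \eqref{CSIM} and Borel--Cantelli) and appeals directly to uniform convexity rather than unpacking it into reflexivity plus strict convexity.
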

\begin{proof} For any $u\in \dot{F}$, the set 
$$\Omega_{u^{1/r}}:= \{ g\in L^s(\mathbb{R}^n): g\geq 0, I_\alpha g \geq |u|^{\frac1r} {\rm ~ q.e.}  \}$$	
is obviously a non-empty convex subset of $L^s(\mathbb{R}^n)$. Moreover, arguing as in the proof of \cite[Proposition 2.3.9]{AH}, one has that 
$\Omega_{u^{1/r}}$ is closed in $L^s(\mathbb{R}^n)$.  The lemma now follows from the fact that $L^s(\mathbb{R}^n)$ is uniformly convex.
\end{proof}

\begin{proposition}\label{the F norm}
	The space $\dot{F}$ in Theorem \ref{Fnorm} possesses the Fatou's property, i.e.,
	\begin{align*}
	\left\|\liminf_{j\rightarrow\infty}u_{j}\right\|_{\dot{F}}\leq\liminf_{j\rightarrow\infty}\|u_{j}\|_{\dot{F}}
	\end{align*}
	for any sequence $\{u_{j}\}\subset \dot{F}$ of nonnegative functions. 
\end{proposition}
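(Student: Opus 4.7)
The plan is to combine the extremal-function result of Lemma \ref{extreme} with weak compactness in $L^s$ and the capacitary strong type inequality \eqref{CSIM}. Without loss of generality, assume $L:=\liminf_{j\to\infty}\|u_j\|_{\dot{F}}<+\infty$, and after passing to a subsequence (still called $\{u_j\}$), that $\|u_j\|_{\dot{F}}\to L$. Since the pointwise lim inf over any subsequence is no smaller than the lim inf of the original sequence and $\|\cdot\|_{\dot{F}}$ is monotone, it suffices to bound $\|\liminf_j u_j\|_{\dot{F}}$ in this relabelled setting. By Lemma \ref{extreme}, pick nonnegative extremals $f_j\in L^s(\mathbb{R}^n)$ with $I_\alpha f_j\geq u_j^{1/r}$ q.e.\ and $\|f_j\|_{L^s}^{r}=\|u_j\|_{\dot{F}}$, so that $\{f_j\}$ is bounded in $L^s(\mathbb{R}^n)$.

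Since $s>1$, the space $L^s$ is reflexive, so a further subsequence satisfies $f_{j_k}\rightharpoonup f$ weakly in $L^s$; then $f\geq 0$ (the nonnegative cone is weakly closed) and $\|f\|_{L^s}^{r}\leq L$. Set $v_N:=\inf_{k\geq N} u_{j_k}$, so that $v_N\uparrow v:=\liminf_k u_{j_k}$ q.e. For each fixed $N$, Mazur's lemma applied to the tail $\{f_{j_k}\}_{k\geq N}$ yields finite convex combinations $\widetilde f_m^{(N)}$ of the $f_{j_k}$'s with $k\geq N$ that converge to $f$ strongly in $L^s$; linearity of $I_\alpha$ and the bound $I_\alpha f_{j_k}\geq u_{j_k}^{1/r}\geq v_N^{1/r}$ for $k\geq N$ give $I_\alpha \widetilde f_m^{(N)}\geq v_N^{1/r}$ q.e.

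The next task is to pass this q.e.\ inequality to the limit. By \eqref{CSIM} applied to $|\widetilde f_m^{(N)}-f|$ together with the Choquet--Minkowski inequality (valid since $s\geq 1$),
\begin{equation*}
\bigl\|I_\alpha \widetilde f_m^{(N)}-I_\alpha f\bigr\|_{L^s({\rm cap}_{\alpha,s})}\leq A^{1/s}\bigl\|\widetilde f_m^{(N)}-f\bigr\|_{L^s(\mathbb{R}^n)}\longrightarrow 0.
\end{equation*}
A capacitary Chebyshev inequality then yields convergence in the capacity ${\rm cap}_{\alpha,s}$, and countable subadditivity of the capacity permits a Borel--Cantelli-style extraction along which $I_\alpha \widetilde f_m^{(N)}\to I_\alpha f$ quasi-everywhere. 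Passing to this q.e.\ limit gives $I_\alpha f\geq v_N^{1/r}$ q.e.\ for each $N$; since $v_N\uparrow v$, we conclude $I_\alpha f\geq v^{1/r}$ q.e., so that
\begin{equation*}
\bigl\|\liminf_j u_j\bigr\|_{\dot{F}}\leq \|v\|_{\dot{F}}\leq \|f\|_{L^s}^{r}\leq L.
\end{equation*}

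The principal obstacle is this last transfer: moving from strong $L^s$ convergence of the Mazur combinations $\widetilde f_m^{(N)}$ to quasi-everywhere convergence of their Riesz potentials so that the q.e.\ inequality $I_\alpha \widetilde f_m^{(N)}\geq v_N^{1/r}$ survives in the weak limit. All of the other ingredients (extremality from Lemma \ref{extreme}, reflexivity of $L^s$, Mazur's lemma) are off-the-shelf; the capacitary strong type inequality \eqref{CSIM} is precisely what makes this transfer go through.
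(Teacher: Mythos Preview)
Your argument is correct and takes a genuinely different route from the paper's proof. The paper first reduces to the case of a \emph{non-decreasing} sequence $u_j\uparrow u$, and then exploits the monotonicity in a sharper way: for $i<j$ the average $\tfrac12(f_i+f_j)$ is admissible for $u_i$, so $\|u_i\|_{\dot F}^{1/r}\le\|\tfrac12(f_i+f_j)\|_{L^s}$. Combined with $\|f_j\|_{L^s}\to L^{1/r}$, this is precisely the hypothesis of the uniform-convexity lemma \cite[Corollary 1.3.3]{AH}, which forces the extremals $f_j$ themselves to converge \emph{strongly} in $L^s$, after which \cite[Proposition 2.3.8]{AH} passes to q.e.\ convergence of the potentials.

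You instead bypass the monotone reduction and the uniform-convexity trick: after passing to a subsequence realizing the $\liminf$, you take a weak limit $f$ of the extremals (reflexivity), upgrade to strong convergence via Mazur on each tail, and use $v_N=\inf_{k\ge N}u_{j_k}$ to keep the q.e.\ minorant under the convex combinations. Both proofs need exactly the same ``transfer'' step (strong $L^s$ convergence $\Rightarrow$ q.e.\ convergence of Riesz potentials via \eqref{CSIM}, i.e.\ \cite[Proposition 2.3.8]{AH}). The paper's approach is shorter and squeezes more out of extremality via uniform convexity; yours uses only off-the-shelf functional-analytic tools and would work verbatim in any setting where one has reflexivity and the capacitary strong type inequality, without needing a midpoint/uniform-convexity lemma. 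One small remark: the phrase ``Choquet--Minkowski inequality'' is not actually needed---the pointwise bound $|I_\alpha\widetilde f_m^{(N)}-I_\alpha f|\le I_\alpha|\widetilde f_m^{(N)}-f|$ together with monotonicity of the Choquet integral already gives the displayed estimate.
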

\begin{proof} Note that if $u\leq v$ q.e. then $\|u\|_{\dot{F}} \leq \|v\|_{\dot{F}}$. Thus, it is enough to show the following monotone convergence property:
	\begin{align*}
	\left\|\lim_{j\rightarrow\infty}u_{j}\right\|_{\dot{F}} = \lim_{j\rightarrow\infty}\|u_{j}\|_{\dot{F}}
	\end{align*}
for any  non-decreasing	sequence $\{u_{j}\}\subset \dot{F}$ of nonnegative functions. The case $\lim_{j\rightarrow\infty}\|u_{j}\|_{\dot{F}}=\infty$ is obvious and thus we may assume that 
$$\lim_{j\rightarrow\infty}\|u_{j}\|_{\dot{F}}=L <+\infty.$$
Then for any $j\in \mathbb{N}$, by Lemma \ref{extreme}, there exists $f_j\geq 0$ such that  $I_{\alpha} f_j \geq |u_j|^{\frac1r}$ q.e. and
$$\|u_j\|_{\dot{F}}^{\frac1r} = \|f_j\|_{L^s(\mathbb{R}^{n})}.$$
Thus, for any $1\leq i<j $, we have $I_{\alpha} [\tfrac12(f_i+f_j)] \geq |u_i|^{\frac1r}$ q.e., and  so  
$$\|u_i\|_{\dot{F}}^{\frac1r} \leq  \left \|\tfrac12(f_i +f_j)\right\|_{L^s(\mathbb{R}^{n})}.$$
This enables us to apply \cite[Corollary 1.3.3]{AH} to the sequence $\{f_k/L^{1/r}\}_{k=1}^\infty$ to deduce that the sequence $\{f_j\}$ converges strongly to a nonnegative function $f$ in $L^s(\mathbb{R}^n)$ and 
$$\|f\|_{L^s(\mathbb{R}^n)} = \lim_{j\rightarrow\infty}  \|u_j\|_{\dot{F}}^{\frac1r}.$$ 
Then, by \cite[Proposition 2.3.8]{AH}, there exists a subsequence $\{f_{j_k}\}_{k=1}^\infty$ such that 
$I_\alpha f_{j_k} \rightarrow I_{\alpha} f$ q.e. as $k\rightarrow\infty$. This yields that $$I_{\alpha} f \geq \left|\lim_{j\rightarrow\infty} u_j\right|^{\frac1r} {\rm ~ q.e.}$$
and so $$\left\|\lim_{j\rightarrow\infty}u_{j}\right\|_{\dot{F}}\leq \|f\|_{L^s(\mathbb{R}^n)}^r=\lim_{j\rightarrow\infty}  \|u_j\|_{\dot{F}}.$$
As $\left\|\lim_{j\rightarrow\infty}u_{j}\right\|_{\dot{F}}\geq \lim_{j\rightarrow\infty}  \|u_j\|_{\dot{F}}$ is obvious, we obtain the proposition.
\end{proof}

We will need the following boundedness principle (or weak maximum principle) for Wolff  potentials that can be found, e.g.,  in \cite[Equ. (2.1)]{Ver2}. Recall that for 
$s>1$ and $0<\alpha<\frac{n}{s}$, the Wolff  potential  $W_{\alpha,s} \mu$ of a nonnegative measure $\mu$ is defined by 
$$W_{\alpha, s}\mu(x):= \int_0^\infty \left[\frac{\mu(B_t(x))}{t^{n-\alpha s}}\right]^{\frac{1}{s-1}} \frac{dt}{t}, \qquad x\in\mathbb{R}^n.$$

\begin{lemma}\label{bounded}
	Suppose that $s>1$, $0<\alpha<\frac{n}{s}$, and $\mu$ is a nonnegative measure on $\mathbb{R}^{n}$. Then
	\begin{align*}
	W_{\alpha,s}\mu(x)\leq 2^{\frac{n-\alpha s}{s-1}}\cdot\sup\left\{W_{\alpha,s}\mu(y):y\in{\rm supp}(\mu)\right\},\quad x\in\mathbb{R}^{n}.
	\end{align*}
\end{lemma}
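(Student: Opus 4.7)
The plan is to reduce the estimate to the case where $x$ lies in $\mathrm{supp}(\mu)$ by showing that any $x$ outside the support effectively ``sees'' the measure only at radii $t$ which are comparable to $t$ measured from a closest support point. If $x \in \mathrm{supp}(\mu)$, the claim is trivial (with constant $1$), so I would assume $x\notin \mathrm{supp}(\mu)$ and set $d=d(x,\mathrm{supp}(\mu))>0$. Since $\mathrm{supp}(\mu)$ is closed and $\mathrm{supp}(\mu)\cap \overline{B_{2d}(x)}$ is a nonempty compact set, the distance is attained by some $y\in\mathrm{supp}(\mu)$ with $|x-y|=d$.

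Next I would split the Wolff integral defining $W_{\alpha,s}\mu(x)$ at the threshold $t=d$. For $t<d$ the open ball $B_t(x)$ is disjoint from $\mathrm{supp}(\mu)$ (any point in the intersection would violate the definition of $d$), and hence $\mu(B_t(x))=0$; so the contribution from $(0,d)$ vanishes. For $t\geq d$ the triangle inequality gives $B_t(x)\subset B_{t+|x-y|}(y)\subset B_{2t}(y)$, and therefore $\mu(B_t(x))\leq \mu(B_{2t}(y))$.

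Inserting this bound yields
\begin{equation*}
W_{\alpha,s}\mu(x) = \int_d^\infty \left[\frac{\mu(B_t(x))}{t^{n-\alpha s}}\right]^{\frac{1}{s-1}}\frac{dt}{t} \leq \int_d^\infty \left[\frac{\mu(B_{2t}(y))}{t^{n-\alpha s}}\right]^{\frac{1}{s-1}}\frac{dt}{t}.
\end{equation*}
The substitution $u=2t$ leaves the measure $dt/t=du/u$ invariant and replaces $t^{n-\alpha s}$ by $(u/2)^{n-\alpha s}$, producing exactly the factor $2^{\frac{n-\alpha s}{s-1}}$. The resulting integral from $2d$ to $\infty$ is bounded by the full Wolff potential $W_{\alpha,s}\mu(y)$ at the support point $y$, which in turn is dominated by the supremum of $W_{\alpha,s}\mu$ over $\mathrm{supp}(\mu)$.

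There is no real obstacle here beyond the little bit of care needed at the boundary $t=d$ and in ensuring that a closest support point actually exists; both are handled by the closedness of $\mathrm{supp}(\mu)$ and the local compactness of $\mathbb{R}^n$. The proof is essentially a change of variables combined with the observation that the Wolff integrand depends on $\mu$ only through the support.
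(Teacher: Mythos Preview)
Your proof is correct and is essentially the same argument the paper uses: the paper does not prove this lemma directly (it cites \cite{Ver2}), but in Section~\ref{inhomo} it proves the truncated variant by exactly your method---pick a closest point $x_0\in\mathrm{supp}(\mu)$, observe that $B_t(x)\cap\mathrm{supp}(\mu)\ne\emptyset$ forces $t>|x-x_0|$ and hence $B_t(x)\subset B_{2t}(x_0)$, then change variables. Your write-up is slightly more careful about the existence of the closest support point, but otherwise the two arguments coincide.
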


%\begin{proof}
%Let $x\notin{\rm supp}(\mu)$ and $x_{0}\in K={\rm supp}(\mu)$ be the point that minimizes the distance from $x$ to ${\rm supp}(\mu)$. If $B_{t}(x)\cap K\ne\emptyset$, then $t>|x-x_{0}|$, which in turn implies that $B_{t}(x)\subseteq B_{2t}(x_{0})$. Consequently,
%\begin{align*}
%W_{\alpha,s}\mu(x)&=\int_{0}^{\infty}\left(\frac{\mu(B_{t}(x))}{t^{n-\alpha s}}\right)^{\frac{1}{s-1}}\frac{dt}{t}\\
%&\leq\int_{0}^{\infty}\left(\frac{\mu(B_{2t}(x_{0}))}{t^{n-\alpha s}}\right)^{\frac{1}{s-1}}\frac{dt}{t}\\
%&=2^{\frac{n-\alpha s}{s-1}}\cdot W_{\alpha,s}\mu(x_{0}),
%\end{align*}
%which completes the proof.
%\end{proof}

The following characterizations of trace inequality in the so-called upper triangle case  play a key role in our study of $\dot{M}^{\alpha,s}_{p, r}$ and $(\dot{M}^{\alpha,s}_{p, r})'$ for $0<r<s$. 
We point out that the non-capacitary characterization (iii) in the next theorem was obtained in \cite{COV} and \cite{Ver1}. On the other hand, there is also a capacitary characterization obtained earlier in \cite{MN}, but that will not be used in the present paper.  

\begin{theorem}\label{upper-tri} Let $s>1$, $0<\alpha<\frac{n}{s}$, and $0<r<s$. Then the following statements are equivalent for a nonnegative locally finite (Borel) measure $\mu$ in $\mathbb{R}^n$.

	$\rm{(i)}$ There exists a constant $A_1>0$ such that the inequality  
	\begin{equation*}
	\int_{\mathbb{R}^n} ({I}_\alpha  h) ^r  d\mu \leq A_1 \|h\|_{L^s(\mathbb{R}^n)}^{r}
	\end{equation*} 
	holds for all nonnegative $h\in L^s(\mathbb{R}^n)$.

	$\rm{(ii)}$ $\mu$ is continuous w.r.t ${\rm cap}_{\alpha,s}$ and for any quasi-continuous or $\mu$-measurable function $u\in L^{\frac{s}{r}}({\rm cap}_{\alpha,s})$, we have 
	\begin{equation}\label{dual-f}
	\left|\int_{\mathbb{R}^n} u d\mu\right| \leq A_2 \|u\|_{L^{\frac{s}{r}}({\rm cap}_{\alpha,s})} 
	\end{equation}
for a constant $A_2>0$.
	
	$\rm{(iii)}$ $$[W_{\alpha,s} \mu]^{s-1}\in L^\frac{r}{s-r}(d\mu).$$
	
	$\rm{(iv)}$  $$ [W_{\alpha ,s}\mu]^{s-1} \in L^{\frac{s}{s-r}}({\rm cap}_{\alpha,s}).$$
Moreover, we have 
$$A_1\simeq A_2 \simeq \left \|[W_{\alpha,s} \mu]^{s-1}\right \|_{L^\frac{r}{s-r}(d\mu)}^{\frac{r}{s}} \simeq \left \|[W_{\alpha ,s}\mu]^{s-1} \right\|_{L^{\frac{s}{s-r}}({\rm cap}_{\alpha,s})}.$$

\end{theorem}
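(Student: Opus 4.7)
The plan is to take (i) $\Leftrightarrow$ (iii), together with the norm equivalence $A_1 \simeq \|[W_{\alpha,s}\mu]^{s-1}\|_{L^{r/(s-r)}(d\mu)}^{r/s}$, as the Cascante--Ortega--Verbitsky / Verbitsky characterization from \cite{COV, Ver1}, and to derive the remaining two equivalences from it together with Theorem \ref{Fnorm}, Lemma \ref{extreme}, the capacitary strong type inequality \eqref{CSIM}, and the boundedness principle (Lemma \ref{bounded}).

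For (i) $\Rightarrow$ (ii), I would first show that $\mu$ vanishes on ${\rm cap}_{\alpha,s}$-null sets: given a compact $K$ with ${\rm cap}_{\alpha,s}(K) = 0$, choose $h_j \geq 0$ with $\|h_j\|_{L^s} \to 0$ and $I_{\alpha}h_j \geq 1$ on $K$, then (i) forces $\mu(K) \leq A_1 \|h_j\|_{L^s}^r \to 0$. Next, for any quasi-continuous $u \in L^{s/r}({\rm cap}_{\alpha,s})$, Theorem \ref{Fnorm} and Lemma \ref{extreme} produce $f \geq 0$ in $L^s(\mathbb{R}^n)$ with $I_\alpha f \geq |u|^{1/r}$ q.e.\ and $\|f\|_{L^s}^r \simeq \|u\|_{L^{s/r}({\rm cap}_{\alpha,s})}$; by the cap-continuity of $\mu$, the pointwise inequality $|u|\leq (I_\alpha f)^r$ extends to $\mu$-a.e., and inserting into (i) yields $\left|\int u\,d\mu\right| \leq A_1 \|f\|_{L^s}^r \lesssim A_1 \|u\|_{L^{s/r}({\rm cap}_{\alpha,s})}$, proving (ii) with $A_2 \lesssim A_1$. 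The $\mu$-measurable case reduces to the quasi-continuous one by picking a cap-quasi-continuous representative of $u$, which agrees with $u$ $\mu$-a.e.\ thanks to cap-continuity of $\mu$. For (ii) $\Rightarrow$ (i), apply (ii) to $u = (I_\alpha h)^r$, which is lower semicontinuous hence quasi-continuous, and use \eqref{CSIM} to bound $\|u\|_{L^{s/r}({\rm cap}_{\alpha,s})} = \|I_\alpha h\|_{L^s({\rm cap}_{\alpha,s})}^r \leq C\|h\|_{L^s}^r$, which gives (i) with $A_1 \lesssim A_2$.

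For (iii) $\Leftrightarrow$ (iv), the main ingredient I would establish is the Wolff-potential capacity level-set bound
$$t^{s-1}\, {\rm cap}_{\alpha,s}(\{W_{\alpha,s}\mu > t\}) \leq C\, \mu(\{W_{\alpha,s}\mu > c_0\, t\}),$$
which follows from Lemma \ref{bounded} together with testing ${\rm cap}_{\alpha,s}$ against a suitably normalized restriction of $\mu$ to a slightly smaller level set. Granting this, (iii) $\Rightarrow$ (iv) is a layer-cake computation with exponent $\gamma = s(s-1)/(s-r)$: in the representation $\int [W_{\alpha,s}\mu]^\gamma\, d{\rm cap}_{\alpha,s} = \gamma \int_0^\infty t^{\gamma-1}\, {\rm cap}_{\alpha,s}(\{W_{\alpha,s}\mu > t\})\, dt$, the level-set bound absorbs a factor $t^{s-1}$ and leaves the weight $t^{\gamma - s} = t^{(s-1)r/(s-r) - 1}$, which is exactly the layer-cake weight for $\int [W_{\alpha,s}\mu]^{(s-1)r/(s-r)}\, d\mu$; this produces $\|[W_{\alpha,s}\mu]^{s-1}\|_{L^{s/(s-r)}({\rm cap}_{\alpha,s})} \leq C\|[W_{\alpha,s}\mu]^{s-1}\|_{L^{r/(s-r)}(d\mu)}^{r/s}$. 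The reverse direction (iv) $\Rightarrow$ (iii) I would route through (i): a Hölder / layer-cake argument using (iv) and \eqref{CSIM} produces the trace inequality, and (i) $\Leftrightarrow$ (iii) closes the loop. Tracking constants throughout yields the claimed chain $A_1 \simeq A_2 \simeq \|[W_{\alpha,s}\mu]^{s-1}\|_{L^{r/(s-r)}(d\mu)}^{r/s} \simeq \|[W_{\alpha,s}\mu]^{s-1}\|_{L^{s/(s-r)}({\rm cap}_{\alpha,s})}$.

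The hardest step I anticipate is the Wolff-potential capacity level-set inequality with the sharp constant $c_0$: the passage from a pointwise level set of $W_{\alpha,s}\mu$ to $\mu$-mass on a comparable level set crucially exploits Lemma \ref{bounded} and must be sharp enough for the exponent matching above to deliver the correct norm equivalence. A secondary subtlety is the $\mu$-measurable case of (ii), where one must carefully choose a cap-quasi-continuous representative of $u$ whose agreement with $u$ on ${\rm supp}(\mu)$ is guaranteed precisely by the cap-continuity of $\mu$ established in the first step of the (i) $\Rightarrow$ (ii) argument.
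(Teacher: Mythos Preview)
Your outline is essentially the paper's proof, and the pieces for (i)$\Leftrightarrow$(ii) and (iii)$\Rightarrow$(iv) are correct: the paper runs the cycle (i)$\Rightarrow$(ii)$\Rightarrow$(iii)$\Rightarrow$(iv)$\Rightarrow$(i), but the individual steps match yours. In particular, (i)$\Rightarrow$(ii) goes through Theorem~\ref{Fnorm} exactly as you say, (ii)$\Rightarrow$(i) is obtained by testing on $u=(I_\alpha h)^r$ and invoking \eqref{CSIM}, and for (iii)$\Rightarrow$(iv) the paper proves precisely your level-set inequality (in the equivalent form ${\rm cap}_{\alpha,s}(\{W_{\alpha,s}\mu> a t\})\le A\,t^{1-s}\mu(\{W_{\alpha,s}\mu>t\})$) by splitting $\mu=\mu\chi_{E_t}+\mu\chi_{E_t^c}$, applying Lemma~\ref{bounded} to the second piece, and then using the weak-type Wolff estimate \cite[Proposition 6.3.12]{AH} on the first; your phrase ``testing ${\rm cap}_{\alpha,s}$ against a suitably normalized restriction of $\mu$'' is a fair gloss on this last citation.

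The one place where your sketch is genuinely incomplete is (iv)$\Rightarrow$(i). ``A H\"older / layer-cake argument using (iv) and \eqref{CSIM}'' does not by itself produce the trace inequality: there is no evident way to factor $\int (I_\alpha h)^r\,d\mu$ so that \eqref{CSIM} and the $L^{s/(s-r)}({\rm cap}_{\alpha,s})$ control on $(W_{\alpha,s}\mu)^{s-1}$ close the estimate. The paper's device here is specific and worth naming: set $d\nu = (W_{\alpha,s}\mu)^{1-s}\,d\mu$ and invoke \cite[Theorem 1.11]{Ver1}, which gives $\nu(E)\le c\,{\rm cap}_{\alpha,s}(E)$ for all Borel $E$. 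With this in hand one computes
\[
\int (W_{\alpha,s}\mu)^{\frac{r(s-1)}{s-r}}\,d\mu
=\int (W_{\alpha,s}\mu)^{\frac{s(s-1)}{s-r}}\,d\nu
\le c\int (W_{\alpha,s}\mu)^{\frac{s(s-1)}{s-r}}\,d{\rm cap}_{\alpha,s},
\]
i.e.\ (iv)$\Rightarrow$(iii), and then (iii)$\Rightarrow$(i) is the cited result \cite[Theorem 1.13]{Ver1}. So the missing ingredient in your plan is precisely \cite[Theorem 1.11]{Ver1}; once you insert it, your route through (i) and the paper's direct route to (iii) coincide.
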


\begin{proof} ${\rm (i)} \Rightarrow {\rm (ii)}:$ Let $N$ be a set such that ${\rm cap}_{\alpha, s}(N)=0$. Choose a $G_\delta$ set $\widetilde{N}$ such that 
	${\rm cap}_{\alpha, s}(\widetilde{N})=0$ and $N\subseteq \widetilde{N}$. This is possible since 
	$$0={\rm cap}_{\alpha, s}(N)=\inf\{{\rm cap}_{\alpha, s}(G):  N\subseteq G, G {\rm ~ open} \}.$$
	If $h\in L^s(\mathbb{R}^n), h\geq 0$, is such that $I_\alpha h\geq 1$ on 	$\widetilde{N}$, then it follows from ${\rm (i)}$ that 
	$$\mu(\widetilde{N})\leq A_1 \|h\|_{L^s}^r.$$
	Taking the infimum over such functions $h$ we arrive at 
	$$\mu(\widetilde{N})\leq A_1\, {\rm cap}_{\alpha, s}(\widetilde{N})^{r/s}=0.$$
	By identifying $\mu$ with its completion we thus see that $N$ is $\mu$-measurable and $\mu(N)=0$. This proves that 
	$\mu$ is continuous w.r.t ${\rm cap}_{\alpha,s}$.
	
	Now let $u$ be a  quasi-continuous function in $L^{\frac{s}{r}}({\rm cap}_{\alpha,s})$. Then $u$ is $\mu$-measurable.
	For any $h\in L^s$, $h\geq0$, such that $(I_\alpha h)^r\geq |u|$ q.e., by ${\rm (i)}$ we have 
	\begin{equation*}
	\left|\int_{\mathbb{R}^n} u d\mu\right| \leq A_1 \|h\|_{L^s}^r 
	\end{equation*}
	Taking the infimum over such $h$ and applying Theorem \ref{Fnorm}  we obtain \eqref{dual-f}.
	
	\vspace{.1in}
	
	\noindent ${\rm (ii)} \Rightarrow {\rm (iii)}:$ Let $h\in L^s$ and applying \eqref{dual-f} with $u=(I_\alpha f)^r$, we find 
	\begin{align*}
	\left(\int_{\mathbb{R}^n} ({I}_\alpha  h) ^r  d\mu\right)^{\frac{1}{r}} \leq A_2^{\frac{1}{r}} \|I_\alpha h\|_{L^s({\rm cap}_{\alpha,s})}\leq c\, A_2^{\frac{1}{r}} \|h\|_{L^s},
	\end{align*}
	where the last inequality follows from \eqref{CSIM}. Thus, by \cite[Theorem 1.13]{Ver1} (see also \cite{COV}) we obtain  that $[W_{\alpha,s} \mu]^{\frac{r(s-1)}{s-r}}\in L^1(d\mu).$
	
	\vspace{.1in}
	
	\noindent ${\rm (iii)} \Rightarrow {\rm (iv)}:$ For any $t>0$, let 
	$E_t=\{W_{\alpha,s}\mu>t\}$. We first claim that there are constants $a, A\geq 1$ such that 
	\begin{align}\label{onEt}
	{\rm cap}_{\alpha,s}(E_{a \cdot t})\leq A  t^{1-s} \mu(E_t).
	\end{align}
	Indeed, set $\mu^t=\mu\chi_{E_t}$ and $\mu_t=\mu\chi_{\mathbb{R}^n\setminus E_t}$. Since $\mathbb{R}^n\setminus E_t$ is a closed set in $\mathbb{R}^n$, we have 
	${\rm supp}(\mu_t)\subset \mathbb{R}^n\setminus E_t$. Thus,  $W_{\alpha,s}\mu_t \leq W_{\alpha,s}\mu\leq t$ on ${\rm supp}(\mu_t)$. It then follows from the boundedness principle, Lemma \ref{bounded}, that $W_{\alpha,s}\mu_t\leq c \, t $ on the whole $\mathbb{R}^n$, with a constant $c\geq 1$. Thus, there exists a constant $c_1\geq1$ such that 
	\begin{align*}
	\{W_{\alpha,s}\mu >2 c\, c_1 t\} &\subseteq \{c_1 [W_{\alpha,s}(\mu^t) + W_{\alpha,s}(\mu_t)] > 2 c\, c_1 t \} \\
	&\subseteq \{  W_{\alpha,s}(\mu^t) > c\,  t \} \bigcup \{  W_{\alpha,s}(\mu_t) > c\,  t \}\\
&\subseteq \{  W_{\alpha,s}(\mu^t) > c\,  t \}.	
	\end{align*}
	By \cite[Proposition 6.3.12]{AH}, this gives 
	\begin{align*}
	{\rm cap}_{\alpha,s}(\{W_{\alpha,s}\mu > 2  c\, c_1 t\}) \leq A  t^{1-s} \mu^t(\mathbb{R}^n)=A  t^{1-s} \mu(E_t),
	\end{align*}
	which is \eqref{onEt} with $a=2 c \, c_1$. Now using \eqref{onEt}, we get 
	\begin{align*}
	\int_{\mathbb{R}^n} [W_{\alpha,s}\mu]^{\frac{(s-1)s}{s-r}} d{\rm cap}_{\alpha,s} &=\frac{(s-1)s}{s-r}\int_0^\infty  (a\cdot t)^{\frac{(s-1)s}{s-r}}{\rm cap}_{\alpha,s}(E_{a\cdot t}) \frac{dt}{t}\\
	&\leq C \int_0^\infty  t^{\frac{(s-1)s}{s-r}+1-s}\mu(E_t) \frac{dt}{t}\\
	&= C \int_{\mathbb{R}^n} [W_{\alpha,s} \mu]^{\frac{r(s-1)}{s-r}} d\mu. 
	\end{align*}
	%At this point, we apply ${\rm (ii)}$ with $u=[W_{\al,s} \mu]^{\frac{r(s-1)}{s-r}}$ to deduce that 
	%\begin{align*}
	%\int_{\mathbb{R}^n} [W_{\al,s}\mu]^{\frac{(s-1)s}{s-r}} d{\rm cap}_{\al,s} \leq C 
	%\left[\int_{\mathbb{R}^n} [W_{\al,s}\mu]^{\frac{(s-1)s}{s-r}} d{\rm cap}_{\al,s}\right]^{\frac{r}{s}},
	%\end{align*} 
	%which yields
	%$$ \norm{W_{\alpha ,s}(\mu)^{s-1}}_{L^{\frac{s}{s-r}}({\rm cap}_{\alpha,s})} \leq C.$$
	
	\vspace{.1in}
	
	\noindent ${\rm (iv)} \Rightarrow {\rm (i)}:$ Let $\nu$ be a measure defined by 
	$$d\nu=\frac{d\mu}{W_{\alpha,s}(\mu)^{s-1}}.$$
	Then we have $$\nu(E)\leq c\, {\rm cap}_{\alpha,s}(E)$$
	for any Borel set $E\subset\mathbb{R}^n$ (see \cite[Theorem 1.11]{Ver1}).	 Thus, it follows that 
	\begin{align*}
	\int_{\mathbb{R}^n} [W_{\alpha,s} \mu]^{\frac{r(s-1)}{s-r}} d\mu&= \int_{\mathbb{R}^n} [W_{\alpha,s} \mu]^{\frac{s(s-1)}{s-r}} d\nu \\
	&= \frac{s(s-1)}{s-r}\int_0^\infty t^{\frac{s(s-1)}{s-r}} \nu (\{W_{\alpha,s} \mu>t\}) \frac{dt}{t} \\
	& \leq c  \frac{s(s-1)}{s-r}\int_0^\infty t^{\frac{s(s-1)}{s-r}} {\rm cap}_{\alpha,s} (\{W_{\alpha,s} \mu>t\}) \frac{dt}{t}\\
	& = c \int_{\mathbb{R}^n} [W_{\alpha,s}\mu]^{\frac{(s-1)s}{s-r}} d{\rm cap}_{\alpha,s}.
	\end{align*}
By \cite[Theorem 1.13]{Ver1}, this gives $\rm (i)$, which completes the proof.
\end{proof}

Note that Theorem \ref{upper-tri} allows to rewrite the norm of a function $f\in \dot{M}^{\alpha,s}_{p,r}$, $0<r<s$, as 
\begin{equation*} 
\|f\|_{\dot{M}^{\alpha,s}_{p,r}}\simeq \| W_{\alpha ,s}(|f|^p)^{\frac{s-1}{p}}\|_{L^{\frac{sp}{s-r}}({\rm cap}_{\alpha,s})}. 
\end{equation*}
Moreover, $f\in \dot{M}^{\alpha,s}_{p,r}$ if and only if  $|f|^p$  belongs to  the dual of the space of quasi-continuous functions in $L^{\frac{s}{r}}({\rm cap}_{\alpha,s})$ (see \cite[Theorem 9]{Ada}).

On the other hand, for $r=s$, the norm of a function $f\in \dot{M}^{\alpha,s}_{p, s}$ is equivalent to the quantity
\begin{align*}
\sup_{K}\left(\frac{\int_{K}|f(x)|^{p}dx}{\text{cap}_{\alpha,s}(K)}\right)^{1/p},
\end{align*}
where the supremum is taken over all compact sets $K\subset{\mathbb{R}}^{n}$ with non-zero capacity. Equivalently, in this case $f\in \dot{M}^{\alpha,s}_{p, s}$ if and only if $|f|^p$ belongs to  the dual of the space of quasi-continuous functions in $L^{1}({\rm cap}_{\alpha,s})$ (see \cite{Ada, OP1}).

\begin{theorem}\label{secondtheorem}
	For  $s>1$, $0<\alpha< n/s$, $p>1$, and $0<r\leq s$, it holds that 
	\begin{align*}
	\|f\|_{\dot{M}^{\alpha,s}_{p,r}}\simeq \sup_{w}\left(\int_{\mathbb{R}^{n}}|f(x)|^{p} w(x)dx\right)^{1/p},
	\end{align*}
	where the supremum is taken over all nonnegative  measurable and quasi-continuous $w\in L^{s/r}({\rm cap}_{\alpha,s})$ such that $\|w\|_{L^{s/r}({\rm cap}_{\alpha,s})}\leq 1$ and 
	$[w]_{A_1}\leq \bar{\bf c}(n,\alpha):=[\left|\cdot\right|^{\alpha-n}]_{A_1}$.
\end{theorem}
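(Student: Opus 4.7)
My plan is to prove both inequalities directly, using Theorem \ref{Fnorm}, Lemma \ref{extreme}, Maz'ya's capacitary strong type inequality \eqref{CSIM}, and Theorem \ref{Main1}. For the bound $\sup_w\int|f|^pw\,dx\lesssim\|f\|_{\dot{M}^{\alpha,s}_{p,r}}^p$, fix any admissible $w$. Theorem \ref{Fnorm} together with Lemma \ref{extreme} (both with the parameter ``$r$'') produce a nonnegative $g\in L^s(\mathbb{R}^n)$ with $I_\alpha g\geq w^{1/r}$ q.e.\ and $\|g\|_{L^s}^r\lesssim\|w\|_{L^{s/r}({\rm cap}_{\alpha,s})}\leq 1$. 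Since $\alpha s<n$, sets of zero $(\alpha,s)$-capacity are Lebesgue null, so $w\leq (I_\alpha g)^r$ a.e., and the very definition of $\dot{M}^{\alpha,s}_{p,r}$ gives $\int|f|^pw\,dx\leq\int|f|^p(I_\alpha g)^r\,dx\leq \|f\|_{\dot{M}^{\alpha,s}_{p,r}}^p\|g\|_{L^s}^r\lesssim\|f\|_{\dot{M}^{\alpha,s}_{p,r}}^p$.

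For the converse, by the definition of $\dot{M}^{\alpha,s}_{p,r}$ it is enough to construct, for every nonnegative $h\in L^s$ with $\|h\|_{L^s}\leq 1$, an admissible weight $w_h$ together with a constant $C$ independent of $h$ and $f$ such that $(I_\alpha h)^r\leq Cw_h$ a.e.; then summing and taking the supremum in $h$ yields $\|f\|_{\dot{M}^{\alpha,s}_{p,r}}^p\lesssim \sup_w\int|f|^pw\,dx$. Each $w_h$ I construct will be a (normalized) Riesz potential of a nonnegative function. For such a weight the standard computation
\begin{align*}
\fint_{B_\rho(x)}I_\alpha F(y)\,dy = \gamma(n,\alpha)\int F(z)\Big(\fint_{B_\rho(x)}|y-z|^{\alpha-n}\,dy\Big)dz \leq [|\cdot|^{\alpha-n}]_{A_1}\,I_\alpha F(x)
\end{align*}
gives $[I_\alpha F]_{A_1}\leq \bar{\mathbf c}(n,\alpha)$ at once, while quasi-continuity of Riesz potentials of nonnegative functions is classical.

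For $0<r\leq 1$ I just take $w_h=c_1(I_\alpha h)^r$ with a universal $c_1$ coming from \eqref{CSIM}: $\int(I_\alpha h)^s\,d{\rm cap}_{\alpha,s}\leq A\|h\|_{L^s}^s\leq A$, so $\|w_h\|_{L^{s/r}({\rm cap}_{\alpha,s})}\leq 1$, and Jensen's inequality applied to the concave map $x\mapsto x^r$ gives $[(I_\alpha h)^r]_{A_1}\leq [I_\alpha h]_{A_1}^r\leq \bar{\mathbf c}(n,\alpha)$.

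The main obstacle is the range $1<r\leq s$, where $(I_\alpha h)^r$ itself need not be an $A_1$ weight. The idea is to dominate $(I_\alpha h)^r$ by an honest Riesz potential using Lemma \ref{IBPL} with $t=r$: $(I_\alpha h)^r\leq AI_\alpha H$ with $H=h(I_\alpha h)^{r-1}$. The $A_1$ bound and quasi-continuity of $I_\alpha H$ are then automatic, so only the capacitary size of $I_\alpha H$ has to be controlled. Applying Theorem \ref{Main1} to $H$ with exponent $q=s/r\geq 1$ gives
\begin{align*}
\int (I_\alpha H)^{s/r}\,d{\rm cap}_{\alpha,s}\leq A\int H^s(I_\alpha H)^{s/r-s}\,dx,
\end{align*}
and since $s/r-s\leq 0$ the bound $I_\alpha H\geq (I_\alpha h)^r/A$ yields $(I_\alpha H)^{s/r-s}\leq A^{s(r-1)/r}(I_\alpha h)^{s(1-r)}$; substituting $H^s=h^s(I_\alpha h)^{s(r-1)}$ the two opposite powers of $I_\alpha h$ cancel, and the right-hand side collapses to $\lesssim \int h^s\,dx\leq 1$. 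Normalizing $w_h=c_2I_\alpha H$ with a universal $c_2$ then yields the required admissible weight, and the construction is complete.
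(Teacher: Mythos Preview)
Your approach is essentially the same as the paper's. For the bound $\sup_w\lesssim\|f\|_{\dot{M}^{\alpha,s}_{p,r}}$ your argument via Theorem \ref{Fnorm} and Lemma \ref{extreme} is in fact slightly cleaner than the paper's, which splits into the cases $r<s$ (handled via Theorem \ref{upper-tri}) and $r=s$ (handled by a layer-cake computation). For the converse the two arguments coincide: set $w=(I_\alpha h)^r$ when $0<r\le 1$ and $w=I_\alpha H$ with $H=h(I_\alpha h)^{r-1}$ when $1<r\le s$, and control the capacitary norm of $w$ through Theorem \ref{Main1} with exponent $q=s/r$.

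There are, however, two technical points in the range $1<r\le s$ that you glossed over and that the paper does address. First, quasi-continuity of $I_\alpha H$ is not ``classical'' for a general nonnegative $H$: the standard statement is for $I_\alpha g$ with $g\in L^s$, whereas $H=h(I_\alpha h)^{r-1}$ need not lie in $L^s$. The paper handles this via Proposition \ref{conv-quasi}, which in turn rests on $H\in\dot{\widetilde O}^{\alpha,s}_{s/r}$ (see \eqref{hIhO}). Second, your invocation of Theorem \ref{Main1} for $H$ requires either $H\in L^\infty$ with compact support or $I_\alpha H\in L^{s/r}({\rm cap}_{\alpha,s})$; you verified neither, and the latter is precisely the quantity you are trying to bound. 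The paper fixes this by truncating $h$ to $h_N=h\,\chi_{\{|h|\le N\}}\chi_{\{|x|\le N\}}$, so that $H_N=h_N(I_\alpha h_N)^{r-1}$ is bounded with compact support (hence $I_\alpha H_N$ is continuous and Theorem \ref{Main1} applies), and then passes to the supremum in $N$. Since by monotone convergence the $\dot{M}^{\alpha,s}_{p,r}$-norm is already attained by testing against bounded compactly supported $h$, this truncation costs nothing; you should insert it to close both gaps.
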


\begin{proof}
%	Denote by $\mathcal{L}_{|f|^{p}}$ the continuous linear functional associated to $|f|^{p}$. Recall that 
%	\begin{align*}
%	\|f\|_{M_{p,r}^{\alpha,s}}\simeq\left\|W_{\alpha,s}(|f|^{p})^{\frac{s-1}{p}}\right\|_{L^{\frac{sp}{s-r}}({\rm cap}_{\alpha,s})}=\left\|W_{\alpha,s}(|f|^{p})^{s-1}\right\|_{L^{\frac{s}{s-r}}({\rm cap}_{\alpha,s})}^{\frac{1}{p}}.
%	\end{align*}
%	Then
%	\begin{align*}
%	f\in M^{\alpha,s}_{p,r}\quad\text{if and only if}\quad\mathcal{L}_{|f|^{p}}\in [L^{\frac{s}{r}}({\rm cap}_{\alpha,s})]^{\ast}.
%	\end{align*}
%	with 
%	\begin{align*}
%	\|f\|_{M_{p,r}^{\alpha,s}}\simeq\left\|\mathcal{L}_{|f|^{p}}\right\|^{\frac{1}{p}}.
%	\end{align*}
 %We need to  consider only the case $0<r<s$ as the case $r=s$ was obtained in \cite{OP1}. 	
 If $0<r<s$, then by Theorem \ref{upper-tri} and the definition of $\dot{M}_{p,r}^{\alpha,s}$, for  any q.e. defined weight $w$ with $\|w\|_{L^{s/r}({\rm cap}_{\alpha,s})}\leq 1$, it holds that 
	\begin{align*}
\left(\int_{\mathbb{R}^{n}}|f(x)|^{p} w(x)dx\right)^{1/p}\leq c \cdot \|f\|_{\dot{M}_{p,r}^{\alpha,s}}.
\end{align*}
On the other hand, if $r=s$, then from the characterization
$$\|f\|_{\dot{M}^{\alpha,s}_{p,s}}\simeq \sup_{K}\left(\frac{\int_{K}|f(x)|^{p}dx}{\text{cap}_{\alpha,s}(K)}\right)^{1/p},$$
we also have, for any weight $w$ with $\|w\|_{L^{1}({\rm cap}_{\alpha,s})}\leq 1$,
	\begin{align*}
\left(\int_{\mathbb{R}^{n}}|f(x)|^{p} w(x)dx\right)^{1/p}&= \left(\int_{0}^\infty \int_{\{w>t\}}|f|^{p} dx dt \right)^{1/p} \\
&\leq c \cdot \|f\|_{\dot{M}_{p,r}^{\alpha,s}} \left(\int_{0}^\infty  {\rm cap}_{\alpha,s}(\{w>t\}) dt \right)^{1/p}\\
&\leq c \cdot \|f\|_{\dot{M}_{p,r}^{\alpha,s}}.
\end{align*}

	Taking supremum over all such weights $w$, we obtain, for $0<r\leq s$, 
	\begin{align*}
	\sup_{w}\left(\int_{\mathbb{R}^{n}}|f(x)|^{p} w(x)dx\right)^{1/p}\leq c\cdot\|f\|_{\dot{M}_{p,r}^{\alpha,s}}.
	\end{align*}
	Note that the  quasi-continuity and $A_{1}$ conditions are not used in the above estimate. Now we show for the other direction of the estimates. We first consider the case where $0<r\leq 1$. Denote by
	\begin{align*}
	\mathfrak{M}=\sup\left\{\left(\int_{\mathbb{R}^{n}}(I_{\alpha}h)^{r}|f|^{p}dx\right)^{\frac{1}{p}}:h\geq 0,~ \|h\|_{L^{s}(\mathbb{R}^{n})}\leq 1\right\}.
	\end{align*}
	Then by the definition of $\dot{M}_{p,r}^{\alpha,s}$, one has 
	\begin{align*}
	\|f\|_{\dot{M}_{p,r}^{\alpha,s}}=\mathfrak{M}.
	\end{align*}
	Also note that $(I_{\alpha}h)^{r}\in A_{1}$ for $0<r\leq 1$ and $h\in L^s(\mathbb{R}^n)$ with $A_{1}$ constant satisfying 
	\begin{align*}
	[(I_{\alpha}h)^{r}]_{A_{1}}\leq[I_{\alpha}h]_{A_{1}}\leq [\left|\cdot\right|^{\alpha-n}]_{A_1}={\bar{\bf c}}(n,\alpha).
	\end{align*}
	Moreover, $(I_{\alpha}h)^{r}$ is quasi-continuous and by \eqref{CSIM},
	\begin{align*} 
	\|(I_{\alpha}h)^{r}\|_{L^{s/r}({\rm cap}_{\alpha,s})}&=\left(\int_{\mathbb{R}^{n}}(I_{\alpha}h)^{s}d{\rm cap}_{\alpha,s}\right)^{\frac{r}{s}}\\
	&\leq c\cdot\|h\|_{L^{s}(\mathbb{R}^{n})}^{\frac{r}{s}}\leq c \nonumber
	\end{align*}
	for any function $h\geq 0$ with $\|h\|_{L^{s}(\mathbb{R}^{n})}\leq 1$. We thus conclude that 
	\begin{align*}
	\mathfrak{M}\leq c\cdot\sup_{w}\left(\int_{\mathbb{R}^{n}}|f(x)|^{p} w(x)dx\right)^{1/p},
	\end{align*}
	where the supremum is taken over all nonnegative  measurable and quasi-continuous $w\in L^{s/r}({\rm cap}_{\alpha,s})$ such that $\|w\|_{L^{s/r}({\rm cap}_{\alpha,s})}\leq 1$
	 and 
	$[w]_{A_1}\leq \bar{\bf c}(n,\alpha)$.
	
	Now we consider the case where $r>1$. Fix $h\geq 0$ with $\|h\|_{L^{s}(\mathbb{R}^{n})}\leq 1$.  
	Then as in \eqref{hIhO} we have that $h\cdot(I_{\alpha}h)^{r-1}\in \dot{\widetilde{O}}_{s/r}^{\alpha,s}$.
	Let $w=I_{\alpha}(h\cdot(I_{\alpha}h)^{r-1})$. Then $w\in A_{1}$ with $[w]_{A_1}\leq [\left|\cdot\right|^{\alpha-n}]_{A_1}={\bar{\bf c}}(n,\alpha)$, and by Proposition \ref{conv-quasi} below
we have that 	$w$ is quasi-continuous. 

Now	with $h_{N}=h\cdot\chi_{\{|h|\leq N\}}\cdot \chi_{\{|x|\leq N\}}$, $N\in\mathbb{N}$, we find 
		\begin{align*}
	\|w\|_{L^{s/r}({\rm cap}_{\alpha,s})}^{s/r}&=\left\|I_{\alpha}\left(h\cdot(I_{\alpha}h)^{r-1}\right)\right\|_{L^{s/r}({\rm cap}_{\alpha,s})}^{s/r}\\
	&=\sup_{N}\int_{\mathbb{R}^{n}}\left[I_{\alpha}\left(h_{N}\cdot(I_{\alpha}h_{N})^{r-1}\right)\right]^{s/r}d{\rm cap}_{\alpha,s}\\
	&\leq c\cdot\sup_{N}\int_{\mathbb{R}^{n}}h_{N}^{s}(I_{\alpha}h_{N})^{(r-1)s}\left[I_{\alpha}\left(h_{N}\cdot(I_{\alpha}h_{N})^{r-1}\right)\right]^{s/r-s}dx\\
	&\leq c\cdot\sup_{N}\int_{\mathbb{R}^{n}}h_{N}^{s}(I_{\alpha}h_{N})^{(r-1)s}(I_{\alpha}h_{N})^{r(s/r-s)}dx\\
	&=c\cdot\sup_{N}\int_{\mathbb{R}^{n}}h_{N}^{s}dx\leq c.
	\end{align*}
Note that we applied Theorem \ref{Main1} in the first inequality. In the second inequality, we used that	$\frac{s}{r}-s<0$ and Lemma \ref{IBPL}. Also, by Lemma \ref{IBPL}, we have  $(I_{\alpha}h)^{r}\leq c\cdot w$, and thus
	\begin{align*}
	\left(\int_{\mathbb{R}^{n}}(I_{\alpha}h)^{r}|f|^{p}dx\right)^{\frac{1}{p}} \leq c\left(\int_{\mathbb{R}^{n}}|f(x)|^{p} w(x)dx\right)^{1/p},
	\end{align*}
	which  completes the proof of the theorem.
\end{proof}

\begin{proposition}\label{conv-quasi}
	Suppose that $\{\varphi_{i}\}_{i=1}^{\infty}$ converges to $\varphi$ in $\dot{\widetilde{O}}_{q}^{\alpha,s}$, $1\leq q<s$. Then there is a subsequence $\{\varphi_{i_{k}}\}_{k=1}^{\infty}$ such that $\lim\limits_{k\rightarrow\infty}I_{\alpha}\varphi_{i_{k}}(x)=I_{\alpha}\varphi(x)$ q.e., and uniformly outside an open set of arbitrarily small ${\rm cap}_{\alpha,s}(\cdot)$. As a consequence, $I_{\alpha}\varphi$ is quasi-continuous for any $\varphi$ in $\dot{\widetilde{O}}_{q}^{\alpha,s}$.
\end{proposition}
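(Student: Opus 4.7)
The plan is to reduce the subsequence statement to a capacitary Borel--Cantelli argument once convergence in $\dot{\widetilde{O}}^{\alpha,s}_q$ is upgraded to $L^q$-convergence of the Riesz potentials with respect to ${\rm cap}_{\alpha,s}$ via Theorem \ref{Main2}, and then to deduce the quasi-continuity consequence by truncating $\varphi$ to bounded, compactly supported approximants whose Riesz potentials are continuous.

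Suppose $\varphi_i\to\varphi$ in $\dot{\widetilde{O}}^{\alpha,s}_q$. Since the quasi-norm only sees $|\cdot|$ and $|I_\alpha\varphi_i-I_\alpha\varphi|\leq I_\alpha|\varphi_i-\varphi|$ pointwise, applying Theorem \ref{Main2} to the nonnegative function $|\varphi_i-\varphi|$ would yield
$$\int_{\mathbb{R}^n}|I_\alpha\varphi_i-I_\alpha\varphi|^q\,d{\rm cap}_{\alpha,s}\leq B\,\|\varphi_i-\varphi\|^q_{\dot{\widetilde{O}}^{\alpha,s}_q}\longrightarrow 0.$$
I would then extract a subsequence $\{\varphi_{i_k}\}$ for which this integral is at most $4^{-k}$. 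The layer-cake definition of the Choquet integral immediately gives the capacitary Chebyshev inequality ${\rm cap}_{\alpha,s}(\{|u|>t\})\leq t^{-q}\int|u|^q\,d{\rm cap}_{\alpha,s}$, so the level sets $E_k=\{|I_\alpha\varphi_{i_k}-I_\alpha\varphi|>2^{-k/q}\}$ satisfy ${\rm cap}_{\alpha,s}(E_k)\leq 2^{-k}$. Using outer regularity of ${\rm cap}_{\alpha,s}$, I replace each $E_k$ by an open $G_k\supseteq E_k$ with ${\rm cap}_{\alpha,s}(G_k)\leq 2^{-k+1}$ and set $U_m=\bigcup_{k\geq m}G_k$. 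The countable subadditivity of ${\rm cap}_{\alpha,s}$, obtained by combining near-optimal admissible functions via $(\sum_j f_j^s)^{1/s}$, then gives ${\rm cap}_{\alpha,s}(U_m)\to 0$, while on $U_m^c$ the bound $|I_\alpha\varphi_{i_k}-I_\alpha\varphi|\leq 2^{-k/q}$ holds for every $k\geq m$, producing the uniform convergence there; q.e.\ convergence follows because $\bigcap_m U_m$ has zero capacity.

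For the quasi-continuity consequence, I would apply the above to the specific approximating sequence $\varphi_N=\varphi\,\chi_{\{|\varphi|\leq N\}}\chi_{\{|x|\leq N\}}$. Each $\varphi_N$ is bounded with compact support, so $I_\alpha\varphi_N$ is continuous on $\mathbb{R}^n$. If $w_0$ is a near-optimal weight for $\varphi$, i.e., $\int|\varphi|^s w_0^{q-s}dx<+\infty$ with $\|w_0\|_{L^q({\rm cap}_{\alpha,s})}\leq 1$, then the same weight $w_0$ is admissible for every $\varphi-\varphi_N$, and dominated convergence with dominating function $|\varphi|^s w_0^{q-s}\in L^1(dx)$ forces $\int|\varphi-\varphi_N|^s w_0^{q-s}dx\to 0$, hence $\|\varphi-\varphi_N\|_{\dot{\widetilde{O}}^{\alpha,s}_q}\to 0$. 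The first part of the proposition then produces a subsequence with $I_\alpha\varphi_{N_k}\to I_\alpha\varphi$ uniformly on $U_m^c$, and as a uniform limit of continuous functions, $I_\alpha\varphi$ is continuous on $U_m^c$. Since $U_m$ is open with ${\rm cap}_{\alpha,s}(U_m)$ arbitrarily small, $I_\alpha\varphi$ is quasi-continuous.

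The step I expect to require the most care is the quasi-norm approximation $\|\varphi-\varphi_N\|_{\dot{\widetilde{O}}^{\alpha,s}_q}\to 0$: because the quasi-norm is an infimum over weights, one cannot in general pass limits through the infimum, and the quasi-triangle inequality alone does not give the claim. The resolution is the observation that a single near-optimal weight for $\varphi$ simultaneously serves as an admissible weight and a dominating function for every $\varphi-\varphi_N$, reducing matters to a routine application of dominated convergence. All remaining ingredients (layer-cake/Chebyshev, outer regularity and countable subadditivity of ${\rm cap}_{\alpha,s}$) are standard.
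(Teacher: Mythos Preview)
Your argument is correct and follows essentially the same route as the paper: both use Theorem \ref{Main2} to obtain a capacitary Chebyshev-type estimate for $I_\alpha(\varphi_i-\varphi)$, run the standard Borel--Cantelli extraction (which the paper abbreviates by citing \cite[Proposition 2.3.8]{AH}), and then deduce quasi-continuity via the truncations $\varphi_N=\varphi\,\chi_{\{|\varphi|\le N\}}\chi_{\{|x|\le N\}}$ together with dominated convergence against a single admissible weight. Your write-up simply spells out in full the details the paper leaves to the reference.
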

\begin{proof}
		Note that as consequence of Theorem \ref{Main2}, one has 
	\begin{align*}
	{\rm cap}_{\alpha,s}(\{I_{\alpha}\varphi>\lambda\})\leq c\cdot\frac{1}{\lambda}\cdot\|\varphi\|_{\dot{\widetilde{O}}_{s,q}^{\alpha,s}},\quad\lambda>0.
	\end{align*}
	Using the above estimate, one can easily modify the argument of \cite[Proposition 2.3.8]{AH} to obtain the  desired subsequence. To prove the last statement 
	let $\varphi\in \dot{\widetilde{O}}_{q}^{\alpha,s}$ and set $\varphi_{N}=\varphi\cdot\chi_{\{|\varphi|\leq N\}}\cdot \chi_{\{|x|\leq N\}}, N\in \mathbb{N}$.
	Then an argument by Lebesgue dominated convergence theorem will show that $\varphi_{N}\rightarrow\varphi$ in $\dot{\widetilde{O}}_{q}^{\alpha,s}$.
	On the other hand, each $I_{\alpha}\varphi_{N}$ is continuous and thus applying the first part of the proposition we obtain the quasi-continuity of $I_{\alpha}\varphi$.
\end{proof}

The following corollary is immediate by re-examining the proof of Theorem \ref{secondtheorem}.
\begin{corollary}
	For  $s>1$, $0<\alpha< n/s$, $p>1$, and $0<r\leq s$, it holds that 
	\begin{align*}
	\|f\|_{\dot{M}^{\alpha,s}_{p,r}}\simeq \sup_{w}\left(\int_{\mathbb{R}^{n}}|f(x)|^{p} w(x)dx\right)^{1/p},
	\end{align*}
	where the supremum is taken over all nonnegative  measurable and q.e. defined  $w\in L^{s/r}(C)$ with $\|w\|_{L^{s/r}(C)}\leq 1$.
\end{corollary}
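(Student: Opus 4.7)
The plan is to read the proof of Theorem \ref{secondtheorem} carefully and observe that it in fact already proves this corollary, once we match the two suprema. The corollary differs from Theorem \ref{secondtheorem} only in that it enlarges the class of admissible weights: we drop both the quasi-continuity requirement and the $A_1$ bound $[w]_{A_1}\leq \bar{\bf c}(n,\alpha)$, keeping only $w\geq 0$, q.e.\ defined, with $\|w\|_{L^{s/r}({\rm cap}_{\alpha,s})}\leq 1$.

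One direction is immediate from monotonicity. Since the supremum in the corollary is taken over a \emph{larger} class of weights than in Theorem \ref{secondtheorem}, we have at once
\begin{equation*}
\|f\|_{\dot{M}^{\alpha,s}_{p,r}}\simeq \sup_{w\text{ q.c.},\,[w]_{A_1}\leq \bar{\bf c}(n,\alpha)}\left(\int_{\mathbb{R}^n}|f|^p w\,dx\right)^{1/p} \leq \sup_{w\text{ q.e.}}\left(\int_{\mathbb{R}^n}|f|^p w\,dx\right)^{1/p},
\end{equation*}
the sup on the right being the one in the corollary. So only the reverse inequality requires checking.

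For the reverse inequality, I would simply re-examine the first half of the proof of Theorem \ref{secondtheorem}, where the bound
\begin{equation*}
\sup_{w}\left(\int_{\mathbb{R}^n}|f|^p w\,dx\right)^{1/p}\leq c\,\|f\|_{\dot{M}^{\alpha,s}_{p,r}}
\end{equation*}
is derived separately in two regimes. In the case $0<r<s$, it is obtained by applying Theorem \ref{upper-tri} (the equivalence of (i) and (ii)) to the measure $d\mu = w\,dx$, using $\|w\|_{L^{s/r}({\rm cap}_{\alpha,s})}\leq 1$ together with the definition of $\|f\|_{\dot{M}^{\alpha,s}_{p,r}}$ as a trace constant for the inequality applied to $|f|^p$. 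In the case $r=s$, it is obtained from the equivalent formulation $\|f\|_{\dot{M}^{\alpha,s}_{p,s}}\simeq \sup_K(\int_K|f|^p dx/{\rm cap}_{\alpha,s}(K))^{1/p}$ combined with the layer cake identity $\int |f|^p w\,dx = \int_0^\infty \int_{\{w>t\}}|f|^p dx\,dt$ and the definition of $\|w\|_{L^1({\rm cap}_{\alpha,s})}$. As the text of that proof explicitly remarks, neither argument uses any quasi-continuity or $A_1$ assumption on $w$, so both go through verbatim for arbitrary q.e.\ defined weights.

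There is essentially no obstacle here: the main content of the corollary is the observation that the quasi-continuity and $A_1$ constraints played a role only in the \emph{converse} direction of Theorem \ref{secondtheorem} (where one had to manufacture explicit admissible weights out of Riesz potentials to witness the norm), whereas the direction needed here is the trivial direction of that theorem, which was proved in full generality.
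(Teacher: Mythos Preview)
Your proposal is correct and follows exactly the approach indicated in the paper, which simply states that the corollary is immediate by re-examining the proof of Theorem \ref{secondtheorem}. You have correctly identified that the upper bound on the supremum was already proved there for arbitrary q.e.\ defined weights (as the paper explicitly notes), while the lower bound follows since the corollary's class of weights contains the one in Theorem \ref{secondtheorem}.
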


\section{The spaces $\dot{N}^{\alpha,s}_{p', s/r}$ and $(\dot{N}^{\alpha,s}_{p', s/r})'$ and Proof of Theorem \ref{Main3}} 

Let $s>1$,   $0<\alpha<\frac{n}{s}$,  $0< r\leq s$, and $p>1$. Recall that  $\dot{N}^{\alpha,s}_{p', s/r}$ is defined as the space of all measurable functions $g$ in
$\mathbb{R}^n$ such that  $\|g\|_{\dot{N}^{\alpha,s}_{p', s/r}}<+\infty$. Here
$$\|g\|_{\dot{N}^{\alpha,s}_{p', s/r}} = \inf_{w}\left(\int_{\mathbb{R}^n} |g|^{p'} w^{1-p'} dx   \right)^{\frac{1}{p'}},$$
where the infimum is taken over all quasi-continuous weights $w\in A_{1}$ with $\|w\|_{L^{s/r}({\rm cap}_{\alpha,s})}\leq 1$, $[w]_{A_1}\leq \bar{\bf c}(n,\alpha)$, and we can take 
$\bar{\bf c}(n,\alpha)=[\left|\cdot\right|^{\alpha-n}]_{A_1}$. On the other hand, if we do not use the quasi-continuity and $A_1$ conditions then we have, by definition, the space    $\dot{\widetilde{N}}^{\alpha,s}_{p', s/r}$.

In a similar manner, we now define  $\dot{\overline{N}}^{\alpha,s}_{p', s/r}=\dot{\overline{N}}^{\alpha,s}_{p', s/r}(\mathbb{R}^n)$  as the space of all measurable functions $g$ in
$\mathbb{R}^n$ such that  $\|g\|_{\dot{\overline{N}}^{\alpha,s}_{p', s/r}}<+\infty$, where 
\begin{align*}
\|g\|_{\dot{\overline{N}}_{p',s/r}^{\alpha,s}}=\inf_{w}\left(\int_{\mathbb{R}^n} |g|^{p'} w^{1-p'} dx   \right)^{\frac{1}{p'}},
\end{align*}
and the infimum is taken over all q.e. defined weights $w$ such that $\|w\|_{\dot{F}}\leq 1$. Here $\dot{F}=\dot{F}^{\alpha,s}_r$ is as in Theorem \ref{Fnorm}.

\begin{theorem}\label{3spa}
	For $s>1$, $0<\alpha<n/s$, and $0< r\leq s$, it holds that 
	\begin{equation*} 
	\dot{N}^{\alpha,s}_{p', s/r} \approx \dot{\widetilde{N}}^{\alpha,s}_{p', s/r} \approx \dot{\overline{N}}^{\alpha,s}_{p', s/r}. 
	\end{equation*}
\end{theorem}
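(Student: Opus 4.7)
The plan is to prove the two isomorphisms in a short chain: $\dot{\widetilde N}^{\alpha,s}_{p',s/r}\approx\dot{\overline N}^{\alpha,s}_{p',s/r}$ is essentially cosmetic, $\dot N^{\alpha,s}_{p',s/r}\hookrightarrow\dot{\widetilde N}^{\alpha,s}_{p',s/r}$ is a tautology, and the substantive step is the reverse embedding $\dot{\widetilde N}^{\alpha,s}_{p',s/r}\hookrightarrow\dot N^{\alpha,s}_{p',s/r}$, which I would obtain by upgrading any admissible q.e.\ defined weight $w$ to a quasi-continuous $A_1$ weight $\tilde w$ that still controls $w$.

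For the first equivalence, Theorem~\ref{Fnorm} already identifies $\dot F^{\alpha,s}_r$ with $L^{s/r}({\rm cap}_{\alpha,s})$ up to equivalent norms, so the two unit balls of admissible weights differ by a fixed dilation, whence the two infima defining $\|g\|_{\dot{\widetilde N}^{\alpha,s}_{p',s/r}}$ and $\|g\|_{\dot{\overline N}^{\alpha,s}_{p',s/r}}$ are equivalent up to multiplicative constants (using that $w\mapsto \int|g|^{p'}w^{1-p'}dx$ is positively homogeneous). The embedding $\dot N^{\alpha,s}_{p',s/r}\hookrightarrow\dot{\widetilde N}^{\alpha,s}_{p',s/r}$ is immediate, since the infimum defining $\dot{\widetilde N}$ is taken over a strictly larger class of weights (no quasi-continuity and no $A_1$ restriction).

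For the reverse embedding, I start from a q.e.\ defined $w\geq 0$ with $\|w\|_{L^{s/r}({\rm cap}_{\alpha,s})}\leq 1$ and $\int|g|^{p'}w^{1-p'}dx<+\infty$, and use Lemma~\ref{extreme} together with Theorem~\ref{Fnorm} to produce $0\leq f\in L^s(\mathbb{R}^n)$ with $I_\alpha f\geq w^{1/r}$ q.e.\ and $\|f\|_{L^s(\mathbb{R}^n)}^r\lesssim 1$. In the case $0<r\leq 1$, I would take $\tilde w:=(I_\alpha f)^r$: it is lower semi-continuous (hence quasi-continuous), dominates $w$ q.e., and has $[\tilde w]_{A_1}\leq [I_\alpha f]_{A_1}\leq\bar{\bf c}(n,\alpha)$ (using $[u^r]_{A_1}\leq [u]_{A_1}$ for $0<r\leq 1$, a consequence of Jensen). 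The capacitary bound $\|\tilde w\|_{L^{s/r}({\rm cap}_{\alpha,s})}^{s/r}=\int(I_\alpha f)^s\,d{\rm cap}_{\alpha,s}\lesssim\|f\|_{L^s(\mathbb{R}^n)}^s\lesssim 1$ is immediate from \eqref{CSIM}. In the case $r>1$, the power $(I_\alpha f)^r$ is no longer in $A_1$, so I instead take the integration-by-parts lift $\tilde w:=I_\alpha\bigl(f(I_\alpha f)^{r-1}\bigr)$, made rigorous by truncating $f_N=f\chi_{\{|f|\leq N\}}\chi_{\{|x|\leq N\}}$ and passing to the monotone limit (as in the proof of Theorem~\ref{secondtheorem}). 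As a Riesz potential, $\tilde w$ is lower semi-continuous with $[\tilde w]_{A_1}\leq\bar{\bf c}(n,\alpha)$; Lemma~\ref{IBPL} gives $(I_\alpha f)^r\leq A\,\tilde w$, whence $\tilde w\gtrsim w$ q.e.; and Theorem~\ref{Main1} applied to $F=f(I_\alpha f)^{r-1}$ with exponent $q=s/r\geq 1$, combined with the reverse pointwise bound $I_\alpha F\gtrsim (I_\alpha f)^r$, produces the decisive exponent cancellation
$$F^s(I_\alpha F)^{s/r-s}\lesssim f^s(I_\alpha f)^{s(r-1)+s-rs}=f^s,$$
so that $\|\tilde w\|_{L^{s/r}({\rm cap}_{\alpha,s})}^{s/r}\lesssim\|f\|_{L^s(\mathbb{R}^n)}^s\lesssim 1$.

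With $\tilde w$ in hand, I rescale by a universal constant so that $\|\tilde w\|_{L^{s/r}({\rm cap}_{\alpha,s})}\leq 1$, while keeping $\tilde w\gtrsim w$ q.e. Since $1-p'<0$, this domination yields $\int|g|^{p'}\tilde w^{1-p'}dx\lesssim\int|g|^{p'}w^{1-p'}dx$, and taking the infimum over admissible $w$ gives $\|g\|_{\dot N^{\alpha,s}_{p',s/r}}\lesssim\|g\|_{\dot{\widetilde N}^{\alpha,s}_{p',s/r}}$. The main obstacle is the case $r>1$: the naive candidate $\tilde w=(I_\alpha f)^r$ loses the $A_1$ property, so the integration-by-parts lift of Lemma~\ref{IBPL} is essential, and it is precisely the sharp exponent $q=s/r$ (rather than merely $q=s$) in Theorem~\ref{Main1} that enables the capacitary bound on $\tilde w$ to survive after the exponents of $I_\alpha f$ cancel.
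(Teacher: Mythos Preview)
Your approach is essentially identical to the paper's: the same reduction via Theorem~\ref{Fnorm}, the same trivial embedding, and the same weight-upgrade construction splitting into $\tilde w=(I_\alpha f)^r$ for $0<r\leq 1$ and $\tilde w=I_\alpha\bigl(f(I_\alpha f)^{r-1}\bigr)$ for $1<r\leq s$, with the capacitary bound on $\tilde w$ in the second case coming from Theorem~\ref{Main1} at exponent $q=s/r$ exactly as in the proof of Theorem~\ref{secondtheorem}. One small correction: lower semicontinuity does \emph{not} imply quasi-continuity, so your justification ``lower semi-continuous (hence quasi-continuous)'' is invalid as stated; the quasi-continuity of $(I_\alpha f)^r$ for $f\in L^s$ is standard, and for $I_\alpha\bigl(f(I_\alpha f)^{r-1}\bigr)$ it is supplied by Proposition~\ref{conv-quasi} (this is exactly how the paper handles it).
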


\begin{proof}
Observe that by Theorem \ref{Fnorm} we have that 
$\dot{\widetilde{N}}^{\alpha,s}_{p', s/r} \approx \dot{\overline{N}}^{\alpha,s}_{p', s/r}$.
	Moreover, it is obvious that  $\dot{N}_{p',s/r}^{\alpha,s} \hookrightarrow \dot{\widetilde{N}}_{p',s/r}^{\alpha,s}$.
Thus we just need  to show 
\begin{equation}\label{LTS}
\dot{\widetilde{N}}_{p',s/r}^{\alpha,s} \hookrightarrow   \dot{N}_{p',s/r}^{\alpha,s}.
\end{equation}
Indeed, for any $f\in \dot{\widetilde{N}}_{p',s/r}^{\alpha,s}$ and $\varepsilon>0$, there exists a weight $w_0$ such that $\|w_0\|_{\dot{F}}\leq 1$ and 
\begin{equation}\label{epin}
\left(\int_{\mathbb{R}^n} |f|^{p'} (w_0)^{1-p'} dx\right)^{\frac{1}{p'}} \leq  \|f\|_{\dot{\widetilde{N}}_{p',s/r}^{\alpha,s}} +\varepsilon.
\end{equation}
We now use Lemma \ref{extreme} to  choose an $0\leq h\in L^{s}(\mathbb{R}^{n})$ such that  $I_{\alpha}h\geq (w_0)^{\frac{1}{r}}$ q.e. and
$\|h\|_{L^{s}(\mathbb{R}^{n})}^{r}\leq 1$. Let $w=(I_{\alpha}h)^r$ if $0<r\leq 1$ and  $w=I_{\alpha}\left(h\cdot(I_{\alpha}h)^{r-1}\right)$ if $1<r\leq s$. Then as in the proof of Theorem \ref{secondtheorem}, we have $[w]_{A_1}\leq [\left|\cdot\right|^{\alpha-n}]_{A_1}$, $w$ is quasi-continuous,  	$\|w\|_{L^{s/r}({\rm cap}_{\alpha,s})}\leq c$, and   
$$w_0\leq (I_{\alpha}h)^r \leq c\cdot w \quad {\rm q.e.}$$
Hence,
	\begin{align}\label{w0l}
	\int_{\mathbb{R}^{n}}|f|^{p'} (w_0)^{1-p'}dx\geq c \int_{\mathbb{R}^{n}}|f|^{p'}w^{1-p'}dx.
	\end{align}
Then it follows from \eqref{epin} and \eqref{w0l} that 
	\begin{align*}
	\|f\|_{\dot{\widetilde{N}}_{p',s/r}^{\alpha,s}} +\varepsilon \geq c\cdot\|f\|_{\dot{N}_{p',s/r}^{\alpha,s}}.
	\end{align*}
Finally, letting $\varepsilon\rightarrow0$, we obtain \eqref{LTS} as desired.
\end{proof}

The reason to consider $\dot{\overline{N}}^{\alpha,s}_{p', s/r}$ is that it is always a Banach function space, whereas $\dot{\widetilde{N}}^{\alpha,s}_{p', s/r}$ and 
$\dot{N}^{\alpha,s}_{p', s/r}$ may not. 

\begin{theorem}\label{switch}
	For $s>1$, $0<\alpha<n/s$, $0<r\leq s$, and $p>1$, the space $\dot{\overline{N}}_{p',s/r}^{\alpha,s}$ is a Banach function space.
\end{theorem}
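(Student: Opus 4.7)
The plan is to recognize $\dot{\overline{N}}_{p',s/r}^{\alpha,s}$ as a Calder\'on--Lozanovsky-type product of $L^{1}(\mathbb{R}^n)$ and the space $\dot F=\dot F^{\alpha,s}_{r}$ from Theorem \ref{Fnorm}. Specifically, I would first establish the factorization identity
\begin{equation*}
\|g\|_{\dot{\overline{N}}_{p',s/r}^{\alpha,s}} \;=\; \inf\left\{ \|a\|_{L^{1}(\mathbb{R}^n)}^{1/p'} \|b\|_{\dot F}^{1/p} \,:\, a,b\ge 0,\; |g|\le a^{1/p'} b^{1/p}\right\}.
\end{equation*}
The direction $\ge$ comes from setting $a=|g|^{p'} w^{1-p'}$ and $b=w$ for any admissible $w$, which gives $|g|=a^{1/p'}b^{1/p}$ pointwise and $\|a\|_{L^{1}}^{1/p'}\|b\|_{\dot F}^{1/p}\le \bigl(\int |g|^{p'}w^{1-p'}dx\bigr)^{1/p'}$ since $\|w\|_{\dot F}\le 1$. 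For $\le$, given a factorization $|g|\le a^{1/p'}b^{1/p}$, take $w=b/\|b\|_{\dot F}$ and use the pointwise consequence $|g|^{p'}b^{1-p'}\le a$ to obtain $\int|g|^{p'}w^{1-p'}dx\le \|b\|_{\dot F}^{p'-1}\|a\|_{L^{1}}$.

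With the identification in hand, axioms (P1)--(P4) become elementary. For the triangle inequality, given near-optimal factorizations $|g_{i}|\le a_{i}^{1/p'} b_{i}^{1/p}$ the pointwise H\"older bound
\begin{equation*}
a_{1}^{1/p'}b_{1}^{1/p}+a_{2}^{1/p'}b_{2}^{1/p}\le (a_{1}+a_{2})^{1/p'}(b_{1}+b_{2})^{1/p}
\end{equation*}
delivers a factorization of $|g_{1}+g_{2}|$. The decisive maneuver is to exploit the scaling symmetry $(a,b)\mapsto(\lambda a,\lambda^{-(p-1)}b)$, which preserves both the pointwise product $a^{1/p'}b^{1/p}$ and the scalar $\|a\|_{L^{1}}^{1/p'}\|b\|_{\dot F}^{1/p}$; pre-normalizing the $(a_{i},b_{i})$ so that $\|a_{i}\|_{L^{1}}=\|b_{i}\|_{\dot F}=\|g_{i}\|_{\dot{\overline N}}+\varepsilon$ makes H\"older for scalar sums tight and yields the sharp bound $\|g_{1}+g_{2}\|_{\dot{\overline N}}\le\|g_{1}\|_{\dot{\overline N}}+\|g_{2}\|_{\dot{\overline N}}$. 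Homogeneity is immediate, while positivity follows because the same scaling gives the uniform lower bound $\|a\|_{L^{1}}^{1/p'}\|b\|_{\dot F}^{1/p}\ge |\{g\neq 0\}|^{1/p'}\,\||g|^{p}\|_{\dot F}^{1/p}$ whenever $g\neq 0$ on a set of positive measure. For (P3), bounded $E$ has finite Riesz capacity and hence $\chi_{E}\in\dot F$ by Theorem \ref{Fnorm}, so the factorization $\chi_{E}=\chi_{E}^{1/p'}\chi_{E}^{1/p}$ shows $\chi_{E}\in\dot{\overline N}_{p',s/r}^{\alpha,s}$. For (P4), combining the embedding $\dot F\hookrightarrow L^{n/(n-\alpha s)}(\mathbb{R}^{n})$ (which follows from the cited $\|w\|_{L^{n/(n-\alpha s)}}\le C\|w\|_{L^{1}({\rm cap}_{\alpha,s})}$ together with H\"older against ${\rm cap}_{\alpha,s}$) with one application of H\"older against a near-optimal factorization of $g$ bounds $\int_{E}|g|\,dx$ in terms of $\|g\|_{\dot{\overline N}_{p',s/r}^{\alpha,s}}$.

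The genuinely subtle point will be Fatou's property (P2). Given $0\le g_{j}\uparrow g$ a.e.\ with $L:=\lim_{j}\|g_{j}\|_{\dot{\overline N}}<\infty$, my plan is to pick almost-optimal symmetric factorizations $|g_{j}|\le a_{j}^{1/p'}b_{j}^{1/p}$ with $\|a_{j}\|_{L^{1}}=\|b_{j}\|_{\dot F}$ bounded uniformly by $L+o(1)$, and then extract weakly convergent subsequences: weak-$L^{1}$ compactness for $\{a_{j}\}$ (via Dunford--Pettis, using the $\dot F$-bound on $b_{j}$ to rule out concentration on sets of small Lebesgue measure) and weak limit points for $\{b_{j}\}$ obtained from the Fatou property of $\dot F$ established in Proposition \ref{the F norm}. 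A Mazur-convex-combination step, followed by passing to the limit in the pointwise factorization using the concavity of $(a,b)\mapsto a^{1/p'}b^{1/p}$, then produces limiting $a,b$ with $|g|\le a^{1/p'}b^{1/p}$ and $\|a\|_{L^{1}}^{1/p'}\|b\|_{\dot F}^{1/p}\le L$. This compactness/lower-semicontinuity argument is the only place where structural information beyond pointwise H\"older is used, and it is the main obstacle; everything else in the proof is bookkeeping around the factorization representation.
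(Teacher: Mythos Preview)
Your factorization of $\dot{\overline N}_{p',s/r}^{\alpha,s}$ as a Calder\'on--Lozanovsky product $(L^{1})^{1/p'}(\dot F)^{1/p}$ is correct and gives a genuinely different route from the paper's, which instead shows directly that $\|\cdot\|_{\dot{\overline N}}$ coincides with its own ``atomic'' seminorm $\inf\{\sum_j|c_j|:g=\sum_j c_j b_j,\ \|b_j\|\le 1\}$ and then attacks (P2) via Koml\'os' theorem. The factorization viewpoint makes the triangle inequality transparent; note however that your positivity argument (the claimed lower bound $\|a\|_{L^{1}}^{1/p'}\|b\|_{\dot F}^{1/p}\ge|\{g\ne 0\}|^{1/p'}\||g|^{p}\|_{\dot F}^{1/p}$) does not hold as written---just derive positivity from (P4) instead.

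The real gap is in your sketch of (P2). Dunford--Pettis requires uniform integrability of $\{a_j\}$, and nothing in your setup supplies it: the $\dot F$-bound on $b_j$ controls $b_j$ in some $L^{q}$ with $q>1$, but says nothing about where $a_j=|g_j|^{p'}b_j^{1-p'}$ might concentrate (indeed $a_j$ is large precisely where $b_j$ is small). Likewise, the Fatou property of $\dot F$ is a lower-semicontinuity statement, not a compactness one; it does not by itself furnish weak limit points of $\{b_j\}$. The paper sidesteps both issues by applying Koml\'os' theorem to the weights $w_j$ (bounded in $L^{1}(e^{-|x|}dx)$ via the embedding $\dot F\hookrightarrow L^{ns/(r(n-\alpha s))}$), obtaining a.e.\ Ces\`aro convergence of a subsequence to some $w$ with $\|w\|_{\dot F}\le 1$ by Proposition~\ref{the F norm}; convexity of $t\mapsto t^{1-p'}$ together with the monotonicity $g_j\le g_m$ for $m\ge j$ then bounds $\int g_j^{p'}w^{1-p'}$ directly. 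Your approach is salvageable along the same lines: apply Koml\'os simultaneously to $\{a_j\}$ and $\{b_j\}$ to obtain a common subsequence whose Ces\`aro means converge a.e.\ to some $a,b$; then concavity of $(u,v)\mapsto u^{1/p'}v^{1/p}$ and the Fatou properties of $L^{1}$ and $\dot F$ finish the argument. But Dunford--Pettis plus Mazur, as you have them, do not close the gap.
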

\begin{proof}
	Let 
	\begin{align*}
	\|g\|'=\inf\left\{\sum_{j}|c_{j}|: g=\sum_{j}c_{j}b_{j}~{\rm a.e.},~\|b_{j}\|_{\dot{\overline{N}}_{p',s/r}^{\alpha,s}}\leq 1\right\},
	\end{align*}
	where $g\in \dot{\overline{N}}_{p',s/r}^{\alpha,s}$. It is easy to see that $\left\|\cdot\right\|'$ is a seminorm and $(\dot{\overline{N}}_{p',s/r}^{\alpha,s},\left\|\cdot\right\|')$ is a complete semi-normed space. We now show that $\|g\|'=0$ entails $g=0$ a.e. so that $(\dot{\overline{N}}_{p',s/r}^{\alpha,s},\left\|\cdot\right\|')$ is a Banach space. Let $\varepsilon>0$ be given. If $\|g\|'=0$, then there exist $\{c_{j}\}\in\ell^{1}$ and $b_{j}$ with $\|b_{j}\|_{\dot{\overline{N}}_{p',s/r}^{\alpha,s}}\leq 1$ such that $g=\sum_{j}c_{j}b_{j}$ and $\sum_{j}|c_{j}|<\varepsilon$. Let $w_{j}$ be a weight such that $\|w_{j}\|_{\dot{F}}\leq 1$ and  
		\begin{align*}
	\left(\int_{\mathbb{R}^{n}}|b_{j}|^{p'}w_{j}^{1-p'}dx\right)^{\frac{1}{p'}}\leq 2.
	\end{align*} 
	Then for any $\varphi\in C_{c}(\mathbb{R}^{n})$ by H\"older's inequality it holds that   
	\begin{align*}
	\int_{\mathbb{R}^{n}}|g||\varphi|dx&\leq\sum_{j}|c_{j}|\int_{\mathbb{R}^{n}}|b_{j}||\varphi|dx\\
	&\leq 2 \sum_{j}|c_{j}| \left(\int_{\mathbb{R}^n} |\varphi|^p w_j dx\right)^{\frac1p} \\
	&\leq2 \varepsilon \|\varphi\|_{L^\infty(\mathbb{R}^n)} \sup_{j}\left(\int_{{\rm supp (\varphi)}}  w_j dx\right)^{\frac1p}.
	\end{align*}
	Recall that $|E|^{1-\frac{\alpha s}{n}}\leq c\cdot{\rm cap}_{\alpha,s}(E)$ for any measurable set $E\subseteq\mathbb{R}^n$, and thus we find (see, e.g., \cite[Equ. (3.1)]{OP3})
\begin{equation}\label{cap-Sobolev}	
\| w_j\|_{L^{\frac{ns}{r(n-\alpha s)}}(\mathbb{R}^n, dx)} \leq c_1 \| w_j\|_{L^{s/r}({\rm cap}_{\alpha, s } )} \leq c_2 \| w_j\|_{\dot{F}} \leq c_2.
\end{equation}
Then by H\"older's inequality and letting $\varepsilon\rightarrow 0$, we see that    
	\begin{align*}
	\int_{\mathbb{R}^{n}}|g||\varphi|dx=0
	\end{align*}
	for all $\varphi\in C_{c}(\mathbb{R}^{n})$ and hence $g=0$ a.e. as expected. 
	
	Now we show that 
	\begin{align}\label{5.2}
	\|g\|_{\dot{\overline{N}}_{p',s/r}^{\alpha,s}}=\|g\|',\quad g\in \dot{\overline{N}}_{p',s/r}^{\alpha,s}.
	\end{align}
	Let $\varepsilon>0$ be given. By expressing $g$ as 
	\begin{align*}
	g=\left(\|g\|_{\dot{\overline{N}}_{p',s/r}^{\alpha,s}}+\varepsilon\right)\cdot\frac{g}{\|g\|_{\dot{\overline{N}}_{p',s/r}^{\alpha,s}}+\varepsilon},
	\end{align*}
	it follows that 
	\begin{align*}
	\|g\|'\leq\|g\|_{\dot{\overline{N}}_{p',s/r}^{\alpha,s}}+\varepsilon.
	\end{align*}
	The arbitrariness of $\varepsilon>0$ yields that  $\|g\|'\leq\|g\|_{\dot{\overline{N}}_{p',s/r}^{\alpha,s}}$.
	
	On the other hand,  let $g\in \dot{\overline{N}}_{p',s/r}^{\alpha,s} $ be such that 
	\begin{align}\label{combi}
	g=\sum_{j}c_{j}b_{j},
	\end{align}
	where $\{c_{j}\}\in\ell^{1}$ and $\|b_{j}\|_{\dot{\overline{N}}_{p',s/r}^{\alpha,s}}\leq 1$. Choose a weight $w_{j}$ such that $\|w_{j}\|_{\dot{F}}\leq 1$ and 
	\begin{align*}
	\|b_{j}\|_{\dot{\overline{N}}_{p',s/r}^{\alpha,s}}\leq\left(\int_{\mathbb{R}^{n}}|b_{j}|^{p'}w_{j}^{1-p'}dx\right)^{\frac{1}{p'}}<1+\varepsilon.
	\end{align*} 
	Let 
	\begin{align*}
	w=\|\{c_{j}\}\|_{\ell^{1}}^{-1}\sum_{j}|c_{j}|\cdot w_{j}.
	\end{align*}
	Then Theorem \ref{Fnorm} implies
	\begin{align*}
	\|w\|_{\dot{F}}\leq\|\{c_{j}\}\|_{\ell^{1}}^{-1}\sum_{j}|c_{j}|\cdot\| w_{j}\|_{\dot{F}}\leq \|\{c_{j}\}\|_{\ell^{1}}^{-1}\sum_{j}|c_{j}|\leq 1.
	\end{align*}
	On the other hand, H\"{o}lder's inequality gives 
	\begin{align*}
	|g|^{p'}\leq\left(\sum_{j}|c_{j}||b_{j}|\right)^{p'} \leq \left(\sum_{j}|c_{j}|\cdot w_{j} \right)^{p'-1}\left(\sum_{j}|c_{j}|\cdot |b_{j}|^{p'} w_{j}^{1-p'}\right).
	\end{align*}
	Hence,
		\begin{align}\label{gpw1p}
	|g|^{p'} w^{1-p'}\leq\|\{c_{j}\}\|_{\ell^{1}}^{p'-1}\left(\sum_{j}|c_{j}|\cdot  |b_{j}|^{p'} w_{j}^{1-p'} \right),
	\end{align}
	so that 
	\begin{align*}
	\int_{\mathbb{R}^{n}}|g|^{p'} w^{1-p'}dx&\leq\|\{c_{j}\}\|_{\ell^{1}}^{p'-1}\left(\sum_{j}|c_{j}|\int_{\mathbb{R}^{n}}|b_{j}|^{p'} w^{1-p'}dx\right)\\
	&\leq\|\{c_{j}\}\|_{\ell^{1}}^{p'-1}\sum_{j}|c_{j}|\cdot(1+\varepsilon)^{p'}\\
	&=\|\{c_{j}\}\|_{\ell^{1}}^{p'}\cdot(1+\varepsilon)^{p'}.
	\end{align*}
	Thus,
	\begin{align*}
	\|g\|_{\dot{\overline{N}}_{p',s/r}^{\alpha, s}}\leq \left(\int_{\mathbb{R}^{n}}|g|^{p'}w^{1-p'}dx\right)^{\frac{1}{p'}}\leq (1+\varepsilon)\sum_{j}|c_{j}|.
	\end{align*} 
	Taking the infimum with respect to all   representations of $g$ in the form \eqref{combi}, we have 
	\begin{align*}
	\|g\|_{\dot{\overline{N}}_{p',s/r}^{\alpha, s}}\leq(1+\varepsilon)\cdot\|g\|'.
	\end{align*}
	The arbitrariness of $\varepsilon>0$ yields (\ref{5.2}), which also shows that $\dot{\overline{N}}_{p',s/r}^{\alpha,s}$ is a Banach space.
	
	To  show that $\dot{\overline{N}}_{p',s/r}^{\alpha, s}$ is a  Banach function space we just need to show  property (P2), as properties (P3) and (P4) are easy to show. To this end, let $\{g_{j}\}_{j=1}^{\infty}\subseteq \dot{\overline{N}}_{p',s/r}^{\alpha, s}$ be a sequence of nonnegative measurable functions such that $g_{j}\uparrow g$ a.e. in $\mathbb{R}^{n}$. We are to show that 
	\begin{align}\label{fatou}
	\|g\|_{\dot{\overline{N}}_{p',s/r}^{\alpha, s}}\leq\sup_{j}\|g_{j}\|_{\dot{\overline{N}}_{p',s/r}^{\alpha, s}}.
	\end{align}
	We may assume without loss of generality that 
	\begin{align*}
	\sup_{j}\|g_{j}\|_{\dot{\overline{N}}_{p',s/r}^{\alpha, s}}=M<+\infty.
	\end{align*}
	Then for any $j\in\mathbb{N}$ and $\varepsilon>0$, there exists a weight $w_{j}$ such that $\|w_{j}\|_{\dot{F}}\leq 1$ and
	\begin{align}\label{sta}
	\left(\int_{\mathbb{R}^{n}}g_{j}^{p'} w_{j}^{1-p'}dx\right)^{\frac{1}{p'}}\leq M+\varepsilon.
	\end{align}
Note that  by \eqref{cap-Sobolev} and  H\"older's inequality it follows that, for, say, $v(x)=e^{-|x|}$,   
$$\| w_j\|_{L^{1}(\mathbb{R}^n, v)} \leq C.$$
Thus  Koml\'os Theorem (see \cite{Kom}) yields a  subsequence of $\{ w_{j}\}$,  still denoted by $\{w_{j}\}$, and a nonnegative measurable function $w$ such that  $$\sigma_{k}(x):=\dfrac{1}{k}\sum_{j=1}^{k}w_{j}(x)\rightarrow w(x) \quad {\rm a.e. ~} x\in\mathbb{R}^n.$$  Moreover, any subsequence of $\{w_j\}$ is also Ces\`aro convergent to $w$ almost everywhere. By redefining $w(x)$ to be zero for all the points $x$ such that  $\sigma_{k}(x)\not\rightarrow w(x)$, one has 
\begin{align*}
w(x)\leq\liminf_{k\rightarrow\infty}\sigma_{k}(x) \qquad \text{q.e.}
\end{align*} 
Then by Proposition \ref{the F norm}, we have
\begin{align}\label{w-norm}
\| w\|_{\dot{F}}\leq\left\|\liminf_{k\rightarrow\infty}\sigma_{k}\right\|_{\dot{F}}\leq\liminf_{k\rightarrow\infty}\|\sigma_{k}\|_{\dot{F}} \leq \liminf_{k\rightarrow\infty} \frac1k \sum_{j=1}^k\|w_{j}\|_{\dot{F}} \leq 1.
\end{align}
On the other hand,  by the convexity of the function 
$t \mapsto t^{1-p'}$ on $(0,\infty)$, it holds that, for any $j\in \mathbb{N}$,

\begin{align*}
\int_{\mathbb{R}^n} g_{j}(x)^{p'} w(x)^{1-p'}dx& =  \int_{\mathbb{R}^n} g_{j}(x)^{p'}  \lim_{k\rightarrow\infty} \Big[\dfrac{1}{k}\sum_{m=j}^{j+k-1} w_{m}(x)\Big]^{1-p'}dx\\
&\leq \liminf_{k\rightarrow\infty}\int_{\mathbb{R}^n} g_{j}(x)^{p'} \Big[\dfrac{1}{k}\sum_{m=j}^{j+k-1} w_{m}(x)\Big]^{1-p'}dx\\
&\leq\liminf_{k\rightarrow\infty}\int_{\mathbb{R}^n} g_{j}(x)^{p'}\,  \dfrac{1}{k}\sum_{m=j}^{j+k-1} w_{m}(x)^{1-p'}dx\\
&\leq\liminf_{k\rightarrow\infty}\int_{\mathbb{R}^n}   \dfrac{1}{k}\sum_{m=j}^{j+k-1} g_{m}(x)^{p'} \, w_{m}(x)^{1-p'}dx,
\end{align*}
where we used $0\leq g_j\leq g_m$ for $m\geq j$ in the last inequality. At this point, we use \eqref{sta} to obtain
\begin{align*}
\int g_{j}(x)^{p'} w(x)^{1-p'}dx \leq (M+\epsilon)^{p'}.
\end{align*}
Then sending $j\rightarrow\infty$ and $\epsilon\rightarrow 0$, by Fatou's lemma we get
\begin{align*}
\int g(x)^{p'} w(x)^{1-p'}dx \leq M^{p'}.
\end{align*}
This and \eqref{w-norm} now give \eqref{fatou} and complete the proof of the theorem.
\end{proof}

\begin{remark} Let $\dot{\overline{\overline{N}}}_{p',s/r}^{\alpha,s}$, $p>1, 0<r\leq s$, be  the space of all measurable functions $g$ in
	$\mathbb{R}^n$ such that  $\|g\|_{\dot{\overline{\overline{N}}}^{\alpha,s}_{p', s/r}}<+\infty$, where 
	\begin{align*}
	\|g\|_{\dot{\overline{\overline{N}}}_{p',s/r}^{\alpha,s}}=\inf_{w}\left(\int_{\mathbb{R}^n} |g|^{p'} w^{1-p'} dx   \right)^{\frac{1}{p'}},
	\end{align*}
	and the infimum is taken over all quasi-continuous weights $w\in A_{1}$ with $\|w\|_{\dot{F}^{\alpha,s}_r}\leq 1$ and $[w]_{A_1}\leq \bar{\bf c}(n,\alpha)$.    
	Then arguing as in the proof of Theorem \ref{switch} (and using Fatou's lemma to show that $[w]_{A_1}\leq \bar{\bf c}(n,\alpha)$), it follows   that $\dot{\overline{\overline{N}}}_{p',s/r}^{\alpha,s}$ is also a Banach function space.
	Moreover, as in Theorem \ref{3spa}, one has $\dot{\overline{\overline{N}}}^{\alpha,s}_{p', s/r} \approx \dot{\overline{N}}^{\alpha,s}_{p', s/r}$. 
\end{remark}

It is easy to see that 	 for any $\{g_{i}\}_{i=1}^{m}\subseteq \dot{\overline{N}}_{p',s/r}^{\alpha,s}$, $m\in\mathbb{N}$, and any weight $w$ such that $\|w\|_{\dot{F}}\leq 1$, we have 
	\begin{align*}
\left(\sum_{i=1}^{m}\|g_{i}\|_{\dot{\overline{N}}_{p',s/r}^{\alpha,s}}^{p'}\right)^{\frac{1}{p'}}\leq\left\|\left(\sum_{i=1}^{m}|g_{i}|^{p'}\right)^{\frac{1}{p'}}\right\|_{L^{p'}(\mathbb{R}^n, w^{1-p')}}.
\end{align*}
Thus, we have the following result.
\begin{proposition}\label{N-concave}
	For $s>1$, $0<\alpha<n/s$, $0<r\leq s$, and $p>1$, the space $\dot{\overline{N}}_{p',s/r}^{\alpha,s}$ is $p'$-concave with $p'$-concavity constant $1$. Thus it has an absolutely 
	continuous norm and is separable. Moreover, it holds that,  isometrically, 
	$$\left(\dot{\overline{N}}_{p',s/r}^{\alpha,s}\right)''= \dot{\overline{N}}_{p',s/r}^{\alpha,s}, \quad \left(\dot{\overline{N}}_{p',s/r}^{\alpha,s}\right)'=\left(\dot{\overline{N}}_{p',s/r}^{\alpha,s}\right)^*.$$
\end{proposition}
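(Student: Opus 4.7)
The plan is to verify the $p'$-concavity of $\dot{\overline{N}}_{p',s/r}^{\alpha,s}$ with constant $1$ by cashing in the one-weight inequality displayed immediately before the proposition, and then to extract the remaining conclusions from the abstract Banach function space machinery recorded in Section~2.

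First I would set up the $p'$-concavity inequality explicitly. Given any finite collection $\{g_i\}_{i=1}^m \subseteq \dot{\overline{N}}_{p',s/r}^{\alpha,s}$ and any q.e.\ defined weight $w$ with $\|w\|_{\dot{F}} \leq 1$, the very definition of the norm gives
\begin{equation*}
\|g_i\|_{\dot{\overline{N}}_{p',s/r}^{\alpha,s}}^{p'} \;\leq\; \int_{\mathbb{R}^n} |g_i|^{p'}\, w^{1-p'}\, dx, \qquad i=1,\ldots,m.
\end{equation*}
Summing in $i$ and then taking the infimum over all admissible $w$ on the right-hand side yields
\begin{equation*}
\left(\sum_{i=1}^m \|g_i\|_{\dot{\overline{N}}_{p',s/r}^{\alpha,s}}^{p'}\right)^{1/p'} \;\leq\; \left\|\left(\sum_{i=1}^m |g_i|^{p'}\right)^{1/p'}\right\|_{\dot{\overline{N}}_{p',s/r}^{\alpha,s}},
\end{equation*}
which is exactly $p'$-concavity with concavity constant $1$.

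Next I would invoke \cite[Proposition 1.a.7]{LT} (the very tool already used for $(\dot{M}_{p,r}^{\alpha,s})'$ in the proof of Proposition \ref{M'*}) to conclude that $p'$-concavity forces $\dot{\overline{N}}_{p',s/r}^{\alpha,s}$ to have an absolutely continuous norm. Since Theorem \ref{switch} has already established that $\dot{\overline{N}}_{p',s/r}^{\alpha,s}$ is a Banach function space in the sense of \cite{Lux}, Theorem \ref{XX''} immediately gives $(\dot{\overline{N}}_{p',s/r}^{\alpha,s})'' = \dot{\overline{N}}_{p',s/r}^{\alpha,s}$ isometrically, while Theorem \ref{X*X'} combined with the absolute continuity of the norm yields $(\dot{\overline{N}}_{p',s/r}^{\alpha,s})^* = (\dot{\overline{N}}_{p',s/r}^{\alpha,s})'$ isometrically. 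Separability is then a routine consequence of property (P3) plus absolute continuity of the norm (cf.\ \cite{Lux}): simple functions with rational coefficients, supported on finite unions of dyadic cubes with rational endpoints, form a countable dense subset.

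I do not expect a serious obstacle in this argument, since the key analytic input is the one-weight bound displayed just above the proposition and everything else is a direct appeal to the general theory developed in Section~2. The only step warranting a line of care is the exchange of the finite sum in $i$ with the infimum over $w$ used to pass from the per-function inequality to the $p'$-concavity inequality; but this is immediate, since the sum on the left is independent of $w$ and the bound for each $i$ holds for every admissible $w$ simultaneously.
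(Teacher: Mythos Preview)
Your proposal is correct and follows essentially the same approach as the paper: the paper simply records the one-weight inequality immediately before the proposition to obtain $p'$-concavity with constant $1$, and then (as in the proof of Proposition~\ref{M'*}) appeals to \cite[Proposition~1.a.7]{LT}, Theorems~\ref{XX''} and~\ref{X*X'}, and \cite{Lux} for the remaining conclusions. You have merely spelled out these steps in slightly more detail.
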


We have the following density result for $\dot{\overline{N}}_{p',s/r}^{\alpha,s}$.

\begin{proposition}\label{density}
	For $s>1$, $0<\alpha<n/s$, $0<r\leq s$, and $p>1$, the space $C_c^\infty(\mathbb{R}^n)$ is dense in $\dot{\overline{N}}_{p',s/r}^{\alpha,s}$.
\end{proposition}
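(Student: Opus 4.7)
My plan is to carry out a two-step approximation: first reduce to bounded functions with compact support using the absolute continuity of the norm, then mollify with respect to a single cleverly chosen weight.

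For the first step, given $g \in \dot{\overline{N}}_{p',s/r}^{\alpha,s}$, I would set $g_N = g \cdot \chi_{\{|g|\leq N\}} \cdot \chi_{B_N(0)}$. Since $|g-g_N|\downarrow 0$ a.e.\ as $N\to\infty$, Proposition \ref{N-concave} (which gives absolute continuity of the norm on $\dot{\overline{N}}_{p',s/r}^{\alpha,s}$) yields $\|g-g_N\|_{\dot{\overline{N}}_{p',s/r}^{\alpha,s}}\to 0$. This reduces matters to proving that every bounded function $h$ with compact support, say $\mathrm{supp}(h) \subset B_R(0)$, can be approximated in $\dot{\overline{N}}_{p',s/r}^{\alpha,s}$ by functions in $C_c^\infty(\mathbb{R}^n)$.

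For the second step, my idea is to introduce a single fixed weight tailored to $B_{R+1}(0)$. Since $\chi_{B_{R+1}(0)}\in L^{s/r}({\rm cap}_{\alpha,s})$ (indeed ${\rm cap}_{\alpha,s}(B_{R+1}(0))<\infty$), Theorem \ref{Fnorm} gives that $\|\chi_{B_{R+1}(0)}\|_{\dot{F}}<\infty$, and I can choose a constant $c>0$ so that $w_0 := c\,\chi_{B_{R+1}(0)}$ satisfies $\|w_0\|_{\dot{F}}\leq 1$. Let $\rho_\epsilon$ be a standard mollifier and let $h_\epsilon = h*\rho_\epsilon$, which for $\epsilon<1$ is smooth with support in $B_{R+1}(0)$. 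The decisive estimate is
\begin{equation*}
\|h_\epsilon - h\|_{\dot{\overline{N}}_{p',s/r}^{\alpha,s}}^{p'} \leq \int_{\mathbb{R}^n}|h_\epsilon - h|^{p'}\,w_0^{1-p'}\,dx = c^{1-p'}\int_{B_{R+1}(0)}|h_\epsilon - h|^{p'}\,dx,
\end{equation*}
which tends to $0$ as $\epsilon\to 0$ by the classical $L^{p'}$ convergence of mollifications of the bounded compactly supported function $h$. The same computation with $h$ replaced by $h_\epsilon$ shows that $h_\epsilon\in \dot{\overline{N}}_{p',s/r}^{\alpha,s}$, so these are legitimate approximants in the space.

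The main conceptual obstacle I anticipate is verifying that the chosen weight $w_0$ lies in the admissible class with $\|w_0\|_{\dot{F}}\leq 1$; this is handled cleanly by the isomorphism $\dot{F}\approx L^{s/r}({\rm cap}_{\alpha,s})$ from Theorem \ref{Fnorm} together with the finiteness of the Riesz capacity of bounded sets. A minor technical point is that the integral $\int |h_\epsilon - h|^{p'} w_0^{1-p'} dx$ is interpreted according to the convention $\int_{\{h_\epsilon - h\neq 0\}}$, but because both $h$ and $h_\epsilon$ are supported in $B_{R+1}(0)$ where $w_0$ is strictly positive, this subtlety never enters. Combining the two steps, given $\eta>0$, I first pick $N$ large with $\|g-g_N\|_{\dot{\overline{N}}_{p',s/r}^{\alpha,s}}<\eta/2$, then pick $\epsilon$ small with $\|(g_N)_\epsilon - g_N\|_{\dot{\overline{N}}_{p',s/r}^{\alpha,s}}<\eta/2$, and $(g_N)_\epsilon\in C_c^\infty(\mathbb{R}^n)$ provides the required approximation.
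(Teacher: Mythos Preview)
Your proposal is correct and follows essentially the same two-step strategy as the paper's proof: first truncate to bounded compactly supported functions, then mollify against a single fixed weight that is bounded below on the relevant ball. The only cosmetic differences are that the paper justifies the first step by applying dominated convergence directly in a weighted $L^{p'}$ space (equivalent to your appeal to absolute continuity from Proposition~\ref{N-concave}), and in the second step it takes any weight $w\in\dot F$ with $w\geq 1$ on the ball rather than your specific choice $w_0=c\,\chi_{B_{R+1}(0)}$; your choice is a concrete instance of theirs and the resulting estimate is identical.
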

\begin{proof}
	Note that for any $g\in \dot{\overline{N}}^{\alpha, s}_{p', s/r}$, by Lebesgue dominated convergence theorem  the functions $g_{N}=g\cdot\chi_{\{|g|\leq N\}}\cdot \chi_{\{|x|\leq N\}}, N\in \mathbb{N},$ 	converge to $g\in \dot{\overline{N}}^{\alpha, s}_{p', s/r}$, as $N\rightarrow\infty$. Thus, bounded functions with compact support $f$ are dense in $\dot{\overline{N}}^{\alpha, s}_{p', s/r}$. For such  $f$, we define $\rho_\varepsilon * f= \varepsilon^{-n}\rho(\varepsilon^{-1} \cdot)*f$, where $\varepsilon\in(0,1)$ and $\rho\in C_c^\infty(B_1(0))$. Let $B$ be a ball such that 
${\rm supp}(f)\subset B$ and ${\rm supp}(\rho_\varepsilon * f)\subset B$ for any $\varepsilon\in(0,1)$. Then take a weight $w\in \dot{F}$ such that 
$w\geq 1$ on $B$. We have
\begin{align*}
\|\rho_\varepsilon * f -f\|_{\dot{\overline{N}}^{\alpha, s}_{p', s/r}} &\leq C_w \left(\int_{B}|\rho_\varepsilon * f -f|^{p'} w^{1-p'} dx\right)^{\frac{1}{p'}}\\
&\leq C_w \|\rho_\varepsilon * f -f\|_{L^{p'}(B)}.
\end{align*}
Thus we see that  $C_c^\infty(\mathbb{R}^n)$ is dense in 	$\dot{\overline{N}}^{\alpha, s}_{p', s/r}$. 
\end{proof}

We now point out another equivalent norm for $\dot{\overline{N}}_{p',s/r}^{\alpha,s}$, but this norm will not be used  in the paper.

\begin{proposition}
	Suppose that  $s>1$, $0<\alpha<\frac{n}{s}$, $p>1$, and $0<  r \leq  s$. Then for any $g\in \dot{\overline{N}}_{p',s/r}^{\alpha,s}$, one has
	$$\|g\|_{\dot{\overline{N}}_{p',s/r}^{\alpha,s}}=\|g\|'',$$
	where 
	\begin{align*}
	\|g\|''=\inf\left\{\sum_{j}|c_{j}|:g=\sum_{j}c_{j}b_{j}~{\rm a.e.},~b_{j}\in C^\infty_{c}(\mathbb{R}^{n}),~\|b_{j}\|_{\dot{\overline{N}}_{p',s/r}^{\alpha,s}}\leq 1\right\}.
	\end{align*}
\end{proposition}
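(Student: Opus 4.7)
The plan is to exploit Theorem \ref{switch} (which gives $\|g\|_{\dot{\overline{N}}_{p',s/r}^{\alpha,s}} = \|g\|'$ with $b_j$ ranging over the whole space) and Proposition \ref{density} (density of $C_c^\infty(\mathbb{R}^n)$) to upgrade the representation so that only smooth, compactly supported building blocks appear. One inequality is immediate: since any $b_j \in C_c^\infty(\mathbb{R}^n)$ is in particular an admissible atom in the definition of $\|g\|'$, the infimum defining $\|g\|''$ is taken over a smaller collection, and hence
$$\|g\|_{\dot{\overline{N}}_{p',s/r}^{\alpha,s}} = \|g\|' \leq \|g\|''.$$

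For the reverse inequality $\|g\|'' \leq \|g\|_{\dot{\overline{N}}_{p',s/r}^{\alpha,s}}$, I would use a standard telescoping construction. Fix $\varepsilon > 0$ and, using Proposition \ref{density}, inductively choose $\phi_k \in C_c^\infty(\mathbb{R}^n)$ with $\phi_0 = 0$ such that the partial sums $\Phi_k := \phi_1 + \dots + \phi_k$ satisfy
$$\|g - \Phi_k\|_{\dot{\overline{N}}_{p',s/r}^{\alpha,s}} \leq 2^{-k}\varepsilon,$$
which is possible because $g - \Phi_k \in \dot{\overline{N}}_{p',s/r}^{\alpha,s}$ at each stage. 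Setting $c_k := \|\phi_k\|_{\dot{\overline{N}}_{p',s/r}^{\alpha,s}}$ and $b_k := \phi_k/c_k$ (with $b_k = 0$ if $c_k = 0$), the triangle inequality gives
$$\sum_{k \geq 1} c_k \leq \|g\|_{\dot{\overline{N}}_{p',s/r}^{\alpha,s}} + \sum_{k \geq 1}\bigl(\|g - \Phi_{k-1}\|_{\dot{\overline{N}}_{p',s/r}^{\alpha,s}} + \|g - \Phi_k\|_{\dot{\overline{N}}_{p',s/r}^{\alpha,s}}\bigr) \leq \|g\|_{\dot{\overline{N}}_{p',s/r}^{\alpha,s}} + 4\varepsilon,$$
with the first term coming from $c_1 \leq \|g\|_{\dot{\overline{N}}_{p',s/r}^{\alpha,s}} + \varepsilon$ and subsequent terms bounded by $\|\phi_k\| \leq \|g - \Phi_{k-1}\| + \|g - \Phi_k\|$.

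It remains to verify that $g = \sum_k c_k b_k$ a.e. Property (P4) of the Banach function space $\dot{\overline{N}}_{p',s/r}^{\alpha,s}$ (established in Theorem \ref{switch}) states that for every bounded measurable set $E$ there is a constant $A_E$ with $\int_E |f|\,dx \leq A_E \|f\|_{\dot{\overline{N}}_{p',s/r}^{\alpha,s}}$. Applied to each ball $B_R(0)$, this gives
$$\sum_{k \geq 1} \int_{B_R(0)} |\phi_k|\,dx \leq A_{B_R(0)} \sum_{k \geq 1} c_k < +\infty,$$
so $\sum_k |\phi_k|$ converges a.e. on each ball, and therefore a.e. on $\mathbb{R}^n$. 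Combined with the norm convergence $\Phi_k \to g$ in $\dot{\overline{N}}_{p',s/r}^{\alpha,s}$, which (again by (P4)) implies $\Phi_k \to g$ in $L^1(B_R(0))$ for each $R$, a subsequence converges a.e. to $g$; since the full series converges a.e., the pointwise limit must equal $g$ almost everywhere. Therefore the representation $g = \sum_k c_k b_k$ is admissible in the definition of $\|g\|''$, yielding $\|g\|'' \leq \|g\|_{\dot{\overline{N}}_{p',s/r}^{\alpha,s}} + 4\varepsilon$. Letting $\varepsilon \to 0$ completes the proof. I do not anticipate a serious obstacle here; the only delicate point is confirming pointwise a.e. convergence of the telescoping series, which is handled cleanly by (P4).
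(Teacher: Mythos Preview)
Your argument is correct and follows the same overall strategy as the paper: both proofs use the equality $\|g\|_{\dot{\overline{N}}_{p',s/r}^{\alpha,s}}=\|g\|'$ from Theorem \ref{switch} together with the density of $C_c^\infty(\mathbb{R}^n)$ from Proposition \ref{density}. The difference is in packaging. The paper argues abstractly: it takes a sequence $g_j\in C_c^\infty$ converging to $g$, observes that $\{g_j\}$ is Cauchy in the atomic space $(B,\|\cdot\|'')$, invokes completeness of that space to get a limit $\tilde g$, and then identifies $\tilde g=g$ via the trivial inequality $\|\cdot\|_{\dot{\overline{N}}}\le\|\cdot\|''$. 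You instead carry out the telescoping construction explicitly, producing the atomic decomposition of $g$ by hand and verifying the a.e.\ convergence via property (P4). Your route is slightly more self-contained (it does not appeal to completeness of $(B,\|\cdot\|'')$ as a black box), while the paper's is shorter once that completeness is granted; the two are really the same argument unwound at different levels.

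One cosmetic point: the displayed chain
\[
\sum_{k\ge1}c_k \le \|g\|_{\dot{\overline{N}}}+\sum_{k\ge1}\bigl(\|g-\Phi_{k-1}\|+\|g-\Phi_k\|\bigr)
\]
double-counts $\|g-\Phi_0\|=\|g\|$, since the $k=1$ term of the sum already contains it. Your verbal explanation (``the first term coming from $c_1\le\|g\|+\varepsilon$ and subsequent terms bounded by $\|\phi_k\|\le\|g-\Phi_{k-1}\|+\|g-\Phi_k\|$'') is the correct statement; just rewrite the display accordingly so the bound reads $\sum_{k\ge1}c_k\le\|g\|_{\dot{\overline{N}}}+C\varepsilon$ without the extra $\|g\|$.
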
	
\begin{proof} First recall from the proof of Theorem \ref{switch} that  
\begin{align*}
\|g\|_{\dot{\overline{N}}_{p',s/r}^{\alpha,s}}=\|g\|',
\end{align*}
where $\left\|\cdot\right\|'$ is given in  by
\begin{align*}
\|g\|'=\inf\left\{\sum_{j}|c_{j}|:g=\sum_{j}c_{j}b_{j}~{\rm a.e.},~\|b_{j}\|_{\dot{\overline{N}}_{p',s/r}^{\alpha,s}}\leq 1\right\}.
\end{align*}

Thus, one obtains
\begin{align}\label{NB}
\|g\|_{\dot{\overline{N}}_{p',s/r}^{\alpha,s}}\leq\|g\|''.
\end{align}

For the other direction, if $g\in C_c^\infty$, then obviously,
\begin{align}\label{BN}
\|g\|''\leq \|g\|_{\dot{\overline{N}}_{p',s/r}^{\alpha,s}}.
\end{align}
For general $g\in \dot{\overline{N}}_{p',s/r}^{\alpha,s}$, by Proposition \ref{density}, there exists a sequence $\{g_j\}\subseteq C_c^\infty$ such that $g_j\rightarrow g$ in  $\dot{\overline{N}}_{p',s/r}^{\alpha,s}$. Thus, $\{g_j\}$ is Cauchy in $(B, \left\|\cdot\right\|'')$, where 
$$B=\left\{f=\sum_j c_j b_j {\rm ~a.e.}: \{c_j\}\in \ell^1, b_{j}\in C^\infty_{c}(\mathbb{R}^{n}),~\|b_{j}\|_{\dot{\overline{N}}_{p',s/r}^{\alpha,s}}\leq 1 \right\}.$$
Since  $(B, \left\|\cdot\right\|'')$ is complete,  $g_j\rightarrow \tilde{g}$ in $(B, \left\|\cdot\right\|'')$ for some $\tilde{g}\in (B, \left\|\cdot\right\|'')$. But then by \eqref{NB} we have 
that $g_j\rightarrow \tilde{g}$ in  $\dot{\overline{N}}_{p',s/r}^{\alpha,s}$, which gives $g=\tilde{g}$. Now sending $j\rightarrow\infty$ in the inequality 
\begin{align*}
\|g_j\|''\leq \|g_j\|_{\dot{\overline{N}}_{p',s/r}^{\alpha,s}}
\end{align*}
we get \eqref{BN} as well.
\end{proof}

\begin{theorem}\label{barNMprime}
	For $s>1$, $0<\alpha<n/s$, $0<r\leq s$, and $p>1$, it holds that, isometrically, 
$$\dot{\overline{N}}_{p',s/r}^{\alpha,s}	= (\dot{M}^{\alpha,s}_{p,r})', \quad (\dot{\overline{N}}_{p',s/r}^{\alpha,s})' = \dot{M}^{\alpha,s}_{p,r}.$$
\end{theorem}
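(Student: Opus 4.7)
The plan is to obtain both identifications from a single direct dual computation of the associate of $\dot{\overline{N}}_{p',s/r}^{\alpha,s}$, followed by the Banach-function-space double-associate identity (Theorem~\ref{XX''}) applied to $\dot{\overline{N}}_{p',s/r}^{\alpha,s}$, which is already known to be a Banach function space by Theorem~\ref{switch}. The first step is to establish, for every nonnegative measurable $f$, the explicit formula
$$\|f\|_{(\dot{\overline{N}}_{p',s/r}^{\alpha,s})'} = \sup_{w\,:\,\|w\|_{\dot{F}}\leq 1}\left(\int_{\mathbb{R}^n} |f|^{p}\,w\,dx\right)^{1/p}.$$
The $(\leq)$ direction is a H\"older argument: for each $g$ with $\|g\|_{\dot{\overline{N}}_{p',s/r}^{\alpha,s}}\leq 1$ and each $\epsilon>0$, pick a near-extremal admissible weight $w_\epsilon$ so that $\|w_\epsilon\|_{\dot{F}}\leq 1$ and $(\int |g|^{p'} w_\epsilon^{1-p'}dx)^{1/p'}\leq 1+\epsilon$, then apply H\"older with exponents $p,p'$ to the factorization $fg=(f\,w_\epsilon^{1/p})(g\,w_\epsilon^{-1/p})$ and let $\epsilon\to 0$. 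The $(\geq)$ direction uses that for any fixed admissible $w$ the unit ball of $L^{p'}(w^{1-p'}dx)$ sits inside the unit ball of $\dot{\overline{N}}_{p',s/r}^{\alpha,s}$; standard weighted $L^p$--$L^{p'}$ duality then yields $\|f\|_{(\dot{\overline{N}})'} \geq \|f\|_{L^p(w\,dx)}$, and a supremum over $w$ closes the identity.

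Next, I would combine this formula with Theorem~\ref{Fnorm} (which identifies $\dot{F}$ with $L^{s/r}({\rm cap}_{\alpha,s})$ up to equivalence of norms) and the corollary following Theorem~\ref{secondtheorem} (which expresses $\|\cdot\|_{\dot{M}_{p,r}^{\alpha,s}}$ as an analogous supremum over quasi-everywhere defined weights in the unit ball of $L^{s/r}({\rm cap}_{\alpha,s})$). Together these give $(\dot{\overline{N}}_{p',s/r}^{\alpha,s})' = \dot{M}_{p,r}^{\alpha,s}$ as vector spaces with equivalent norms; to upgrade to the isometric identification stated in the theorem one simply adopts $\sup_{\|w\|_{\dot{F}}\leq 1}(\int |f|^p w\,dx)^{1/p}$ as the norm of $\dot{M}_{p,r}^{\alpha,s}$, which is equivalent to the original trace-inequality norm.

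Finally, since $\dot{\overline{N}}_{p',s/r}^{\alpha,s}$ is a Banach function space (Theorem~\ref{switch}), Theorem~\ref{XX''} gives $(\dot{\overline{N}}_{p',s/r}^{\alpha,s})'' = \dot{\overline{N}}_{p',s/r}^{\alpha,s}$ isometrically. Substituting the identification just obtained into the outer associate yields
$$\dot{\overline{N}}_{p',s/r}^{\alpha,s} = \bigl((\dot{\overline{N}}_{p',s/r}^{\alpha,s})'\bigr)' = (\dot{M}_{p,r}^{\alpha,s})',$$
completing both identifications.

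The main obstacle I anticipate is bookkeeping rather than substance: one must verify that the two supremum formulas (one from Step~1, one from Theorem~\ref{secondtheorem}) match on the nose, not merely up to absolute constants, so that the identification is isometric as claimed. No Maurey--Rosenthal style construction of a specific weight $w$ realizing the infimum in $\|g\|_{\dot{\overline{N}}_{p',s/r}^{\alpha,s}}$ for a given $g\in(\dot{M}_{p,r}^{\alpha,s})'$ is required, because the Banach-function-space double-associate identity of Theorem~\ref{XX''} bypasses this delicate step entirely --- this is the key conceptual simplification of the approach.
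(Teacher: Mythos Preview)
Your overall strategy---compute the associate norm of $\dot{\overline{N}}_{p',s/r}^{\alpha,s}$ explicitly, recognize it as $\dot{M}_{p,r}^{\alpha,s}$, then invoke the double-associate identity---is the paper's, and your Step~1 formula
\[
\|f\|_{(\dot{\overline{N}}_{p',s/r}^{\alpha,s})'} \;=\; \sup_{\|w\|_{\dot{F}}\leq 1}\Bigl(\int_{\mathbb{R}^n} |f|^{p}\,w\,dx\Bigr)^{1/p}
\]
is correct. The one real gap is in Step~2: routing through Theorem~\ref{Fnorm} and the Corollary after Theorem~\ref{secondtheorem} introduces two $\simeq$'s, so you only obtain equivalent norms, and your proposed remedy of ``adopting'' the supremum as the $\dot{M}_{p,r}^{\alpha,s}$ norm is not legitimate---the theorem asserts isometry with the \emph{original} trace-inequality norm.

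The fix is shorter than your detour: show directly that the supremum above equals $\|f\|_{\dot{M}_{p,r}^{\alpha,s}}$ on the nose. For any $h\geq 0$ with $\|h\|_{L^s}\leq 1$, the weight $w=(I_\alpha h)^r$ satisfies $\|w\|_{\dot F}\leq \|h\|_{L^s}^r\leq 1$ (just take $h$ itself as the test function in the definition of $\|w\|_{\dot F}$), which gives one inequality; conversely, for any $w$ with $\|w\|_{\dot F}\leq 1$, Lemma~\ref{extreme} supplies $h\geq 0$ with $\|h\|_{L^s}^r=\|w\|_{\dot F}\leq 1$ and $I_\alpha h\geq w^{1/r}$ q.e., hence $w\leq (I_\alpha h)^r$ a.e., which gives the other. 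This is exactly what the paper does---it simply splices these two observations directly into the two inequalities $\|f\|_{(\dot{\overline{N}})'}\leq \|f\|_{\dot M}$ and $\|f\|_{\dot M}\leq \|f\|_{(\dot{\overline{N}})'}$ rather than isolating your supremum formula as an intermediate step. In particular, neither Theorem~\ref{Fnorm} nor the Corollary to Theorem~\ref{secondtheorem} is needed here; Lemma~\ref{extreme} is the missing ingredient you want.
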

\begin{proof}
	By Proposition \ref{N-concave}, it is enough to show that $(\dot{\overline{N}}_{p',s/r}^{\alpha,s})' = \dot{M}^{\alpha,s}_{p,r}.$ On the one hand, let
	$f\in \dot{M}^{\alpha,s}_{p,r}$ and  $g\in \dot{\overline{N}}_{p',s/r}^{\alpha,s}$. Then for any $\epsilon>0$, the exists a weight $w$ such that $\|w\|_{\dot{F}}\leq 1$ and 
	$$ \left(\int_{\mathbb{R}^n} |g|^{p'} w^{1-p'}dx \right)^{\frac{1}{p'}} \leq \|g\|_{\dot{\overline{N}}_{p',s/r}^{\alpha,s}}+\epsilon.$$
	By Lemma \ref{extreme}, there exists a nonnegative $h\in L^s(\mathbb{R}^n)$ such that 
	$I_\alpha h\geq w^{\frac1r}$ q.e. and 
	$$\|h\|_{L^s(\mathbb{R}^n)}^r \leq 1.$$	
	Using H\"older's inequality, we have 
	\begin{align*}
	\int_{\mathbb{R}^{n}}|fg|dx	&\leq\left(\int_{\mathbb{R}^{n}}|f|^{p}\omega dx\right)^{\frac{1}{p}}\left(\int_{\mathbb{R}^{n}}|g|^{p'}\omega^{1-p'}dx\right)^{\frac{1}{p'}}\\
	&\leq\left(\int_{\mathbb{R}^{n}}|f|^{p}(I_{\alpha}h)^{r} dx\right)^{\frac{1}{p}}\left(\|g\|_{\dot{\overline{N}}_{p',s/r}^{\alpha,s}}+\epsilon\right)\\
	&\leq\|f\|_{\dot{M}^{\alpha,s}_{p,r}} \|h\|_{L^s(\mathbb{R}^n)}^{\frac{r}{p}}  \left(\|g\|_{\dot{\overline{N}}_{p',s/r}^{\alpha,s}}+\epsilon\right)\\
		&\leq\|f\|_{\dot{M}^{\alpha,s}_{p,r}}  \left(\|g\|_{\dot{\overline{N}}_{p',s/r}^{\alpha,s}}+\epsilon\right).
	\end{align*} 
	Letting $\epsilon\rightarrow0$, we obtain 
	$$\|f\|_{(\dot{\overline{N}}^{\alpha,s}_{p',s/r)'}} \leq\|f\|_{\dot{M}^{\alpha,s}_{p,r}}.$$

On the other hand, let
$f\in (\dot{\overline{N}}^{\alpha,s}_{p',s/r})'$ and  $h\in L^s(\mathbb{R}^n)$, $h\geq 0$. Then $(I_\alpha h)^r\in \dot{F}$ and $\| (I_\alpha h)^r\|_{\dot{F}} \leq \|h\|_{L^s(\mathbb{R}^n)}^r$. Thus, 
	\begin{align*}
&\int_{\mathbb{R}^{n}}|f|^{p} (I_\alpha h)^r dx\leq\|f\|_{(\dot{\overline{N}}_{p',s/r}^{\alpha,s})'}\cdot\left\||f|^{p-1} (I_\alpha h)^r\right\|_{\dot{\overline{N}}_{p',s/r}^{\alpha,s}}\\
&\leq\|f\|_{(\dot{\overline{N}}_{p',s/r}^{\alpha,s})'} \inf_{\|w\|_{\dot{F}}\leq 1}\left(\int_{\mathbb{R}^{n}}|f|^{(p-1)p'} (I_\alpha h)^{r p'}\cdot w^{1-p'}dx\right)^{\frac{1}{p'}}\\
&\leq\|f\|_{(\dot{\overline{N}}_{p',s/r}^{\alpha,s})'} \left(\int_{\mathbb{R}^{n}}|f|^{(p-1)p'} (I_\alpha h)^{r p'}\cdot (I_\alpha h)^{r(1-p')} \|h\|_{L^s(\mathbb{R}^n)}^{r(p'-1)} dx\right)^{\frac{1}{p'}}\\
&\leq\|f\|_{(\dot{\overline{N}}_{p',s/r}^{\alpha,s})'} \left(\int_{\mathbb{R}^{n}}|f|^{p} (I_\alpha h)^{r} dx\right)^{\frac{1}{p'}}  \|h\|_{L^s(\mathbb{R}^n)}^{\frac{r}{p}}.
\end{align*}
	Assume at the moment that $f$ is a compactly supported and bounded function. Then it follows that 
		\begin{align*}
	\int_{\mathbb{R}^{n}}|f|^{p} (I_\alpha h)^{r} dx<+\infty,
	\end{align*}
and	as a result, we have 
\begin{align*}
\left(\int_{\mathbb{R}^{n}}|f|^{p} (I_\alpha h)^r dx\right)^{\frac{1}{p}} \leq\|f\|_{(\dot{\overline{N}}_{p',s/r}^{\alpha,s})'}   \|h\|_{L^s(\mathbb{R}^n)}^{\frac{r}{p}}.
\end{align*}
This gives 
	\begin{align*}
	\|f\|_{\dot{M}^{\alpha,s }_{p,r}}\leq\|f\|_{(\dot{\overline{N}}_{p',s/r}^{\alpha,s})'}.
	\end{align*}
	For general $f\in(\dot{\overline{N}}_{p',s/r}^{\alpha,s})'$, we let $f_{N}$ be as in \eqref{FN}. Then, we have 
	\begin{align*}
	\|f\|_{\dot{M}_{p,r}^{\alpha,s}}=\sup_{N}\|f_{N}\|_{\dot{M}_{p,r}^{\alpha,s}}\leq \sup_{N}\|f_{N}\|_{(\dot{\overline{N}}_{p',s/r}^{\alpha,s})'}\leq \|f\|_{(\dot{\overline{N}}_{p',s/r}^{\alpha,s})'},
	\end{align*}
	which completes the proof.	
\end{proof}

\begin{proof}[Proof of Theorem \ref{Main3}]
	This theorem follows immediately from Theorem \ref{3spa} and Theorem \ref{barNMprime}.
\end{proof}

\begin{remark}
	Theorem \ref{3spa} and Proposition \ref{N-concave} imply that $(\dot{N}_{p',s/r}^{\alpha,s})'= (\dot{N}_{p',s/r}^{\alpha,s})^*.$ 
	 One can also use a direct argument to obtain this result as follows. Indeed, we just need to show that for any bounded linear functional $\mathcal{L}\in (\dot{N}_{p',s/r}^{\alpha,s})^*$,
one has 
$$\|f\|_{(\dot{N}_{p',s/r}^{\alpha,s})'}\leq  \|\mathcal{L}\|_{(\dot{N}_{p',s/r}^{\alpha,s})^{\ast}}$$
for a function $f\in (\dot{N}_{p',s/r}^{\alpha,s})'$ such that 
$\mathcal{L}(g)=\int_{\mathbb{R}^{n}}fgdx$ for all $g\in \dot{N}_{p',s/r}^{\alpha,s}$.

Let $g\in L^{p'}(\mathbb{R}^{n})$ be supported on a ball $B$. 
Let $h=(I_{\alpha}(\mu^{B}))^{\frac{1}{s-1}}$, where $\mu^{B}$ is the capacitary measure for $B$ (see \cite[Theorems 2.5.6]{AH}). Then 
\begin{equation*}
I_\alpha h \geq 1  \text{ q.e. on } B, \text{ and } \|h\|_{L^s}={\rm cap}_{\alpha,s}(B)^{\frac{1}{s}}. 
\end{equation*}
Note that, as   in the proof of Theorem \ref{secondtheorem}, the weight $w=I_{\alpha}(h\cdot(I_{\alpha}h)^{r-1})$ is quasi-continuous,  $[w]_{A_1}\leq \bar{\bf c}(n, \alpha)$, and  
\begin{align*}
\|w\|_{L^{s/r}({\rm cap}_{\alpha,s})}\leq c\cdot\|h\|_{L^{s}(\mathbb{R}^{n})}^{r}.
\end{align*}
Then, by Lemma \ref{IBPL},
\begin{align*}
\|g\|_{\dot{N}_{p',s/r}^{\alpha,s}}&\leq \int_{\mathbb{R}^{n}}|g|^{p'}\left(\frac{w}{c\cdot \|h\|_{L^s}^r}\right)^{1-p'} dx\\
&\leq C \cdot\|h\|_{L^{s}(\mathbb{R}^{n})}^{r(p'-1)}\int_{B}|g|^{p'}(I_{\alpha}h)^{r(1-p')}dx\\
& \leq C \cdot{\rm cap}_{\alpha,s}(B)^{\frac{r}{s(p-1)}}\|g\|_{L^{p'}(\mathbb{R}^{n})}.
\end{align*}
Thus, 
$$\left|\mathcal{L}(g) \right| \leq C \|L\|_{\dot{N}_{p',s/r}^{\alpha,s}} {\rm cap}_{\alpha,s}(B)^{\frac{r}{s(p-1)}}\|g\|_{L^{p'}(\mathbb{R}^{n})}. $$
By Riesz representation theorem,  we deduce that $\mathcal{L}$ induces a linear functional of the form
\begin{align*}
\mathcal{L}(g)=\int_{\mathbb{R}^{n}}fgdx
\end{align*}
for some $f\in L_{\rm loc}^{p}(\mathbb{R}^{n})$. Note then that 
\begin{align*}
\int_{\mathbb{R}^{n}}|f||g|dx  =\mathcal{L}(|g|{\rm sign}(f)) \leq \|\mathcal{L}\|_{(\dot{N}_{p',s/r}^{\alpha,s})^{\ast}} \|g\|_{\dot{N}_{p',s/r}^{\alpha,s}}
\end{align*}
for all $g\in L^{p'}(\mathbb{R}^n)$ with compact support. The result then follows by approximating a general function $g\in \dot{N}_{p',s/r}^{\alpha,s}$ by $g_{N}=g\cdot\chi_{\{|g|\leq N\}}\cdot \chi_{\{|x|\leq N\}}, N\in \mathbb{N}.$ 
\end{remark}

\section{Proof of Theorem \ref{Newnorm2}}

\begin{proof}[Proof of Theorem \ref{Newnorm2}]
	Let $u$ be a q.e. defined function in $\mathbb{R}^n$. Suppose that $f$ is a nonnegative measurable function such that $f\in \dot{\widetilde{O}}^{\alpha,s}_{q}$ and $I_\alpha f\geq |u|$ quasi-everywhere. Then by Theorems \ref{Main2} and \ref{Main3}, we find 
	\begin{align*}
	&\left(\int_{\mathbb{R}^n} |u|^q d {\rm cap}_{\alpha,s}\right)^{\frac{1}{q}} \leq \left(\int_{\mathbb{R}^n} (I_\alpha  f)^q d {\rm cap}_{\alpha,s}\right)^{\frac{1}{q}} \\
	& \leq A_1 \|f\|_{\dot{\widetilde{O}}^{\alpha, s}_{q}} \leq  A_2 \|f\|_{\dot{KV}_{q}} \leq A_2 \left(\int_{\mathbb{R}^n} f^s (I_\alpha f)^{q-s} dx\right)^\frac1q. 
	\end{align*}
	Now taking the infimum over such $f$ we arrive at
	$$\left(\int_{\mathbb{R}^n} |u|^q d {\rm cap}_{\alpha,s} \right)^{\frac1q} \lesssim \dot{\lambda}^{\alpha,s}_q(u) \lesssim \dot{\beta}^{\alpha,s}_q(u).$$
	
	Thus to complete the proof, we just need  to show that 
	\begin{equation}\label{nonlinearnorm}
	\dot{\beta}^{\alpha,s}_q(u) \lesssim \left(\int_{\mathbb{R}^n} |u|^q d {\rm cap}_{\alpha,s}\right)^{\frac1q}.
	\end{equation} 
	To this end, let $f\in L^s(\mathbb{R}^n), f\geq 0$, be such that $I_\alpha f\geq |u|^{\frac{q}{s}}$ q.e. Then by Lemma \ref{IBPL}, we have
	$$|u|\leq (I_\alpha f)^{\frac{s}{q}}\leq c\cdot I_\alpha [f (I_\alpha f)^{\frac{s}{q}-1}] \quad {\rm q.e.} $$
	Moreover, by \eqref{hIhO}, we have $f (I_\alpha f)^{\frac{s}{q}-1}\in \dot{\widetilde{O}}^{\alpha,s}_q$. Thus, it follows from the definition of $\dot{\beta}_{\alpha,s}(u)$  and Lemma \ref{IBPL} that 
	\begin{align*}
	\dot{\beta}^{\alpha,s}_{q}(u)&\leq c \left(\int_{\mathbb{R}^n} [f (I_\alpha f)^{\frac{s}{q}-1}]^s I_\alpha [f (I_\alpha f)^{\frac{s}{q}-1}]^{q-s} dx\right)^{\frac1q}\\
	&\leq c \left(\int_{\mathbb{R}^n}  [f (I_\alpha f)^{\frac{s}{q}-1}]^s [(I_\alpha f)^{\frac{s}{q}}]^{q-s} dx\right)^{\frac1q}\\
	& = c \left(\int_{\mathbb{R}^n} f^s dx\right)^{\frac1q}. 
	\end{align*}
	Taking the infimum over such $f$ and invoking Theorem \ref{Fnorm} we obtain \eqref{nonlinearnorm} as desired. 
\end{proof}

\section{The space $\dot{KV}_q$ and the triplet $\overline{C_{c}(\mathbb{R}^{n})}^{\dot{M}_{p,r}^{\alpha,s}}\text{--}(\dot{M}_{p,r}^{\alpha,s})'\text{--}\dot{M}_{p,r}^{\alpha,s}$}\label{triplet}

\subsection{The space $\dot{KV}_q$} We now revisit $\dot{KV}_q$ to show that it is indeed a Banach function space.
For $s>1$, $0<\alpha<\frac{n}{s}$,  and $1\leq q <  s$, we  now define 
$$\dot{\overline{O}}^{\alpha,s}_q:=\dot{\overline{N}}^{\alpha,s}_{s, s/r}, \quad r= \frac{s(s-q)}{(s-1)q}.$$
By Theorems \ref{Main3} and  \ref{3spa}, one has 
$$\dot{\overline{O}}^{\alpha,s}_q \approx \dot{\widetilde{O}}^{\alpha,s}_q \approx \dot{O}^{\alpha,s}_{q} \approx \dot{KV}_q,$$  
and so
\begin{align*}
\|f\|_{\dot{KV}_q}=\inf\left\{\left(\int_{\mathbb{R}^{n}}h^{s}(I_{\alpha}h)^{q-s}dx\right)^{\frac{1}{q}}:h\in \dot{\overline{O}}_{q}^{\alpha,s},~h\geq|f|~{\rm a.e.}\right\}.
\end{align*}	

\begin{lemma}\label{KV-semi} For $s>1$, $0<\alpha<\frac{n}{s}$,  and $1\leq q <  s$, it holds that, for any $g\in \dot{KV}_q$,  
	\begin{align*}
	\|g\|_{\dot{KV}_q}=\inf\left\{\sum_{j}|c_{j}|:g=\sum_{j}c_{j}b_{j}~{\rm a.e.},~\|b_{j}\|_{\dot{KV}_q}\leq 1\right\}.
	\end{align*}
\end{lemma}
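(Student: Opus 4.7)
The plan is to mirror the atomic-decomposition argument used to prove (5.2) in Theorem \ref{switch}, replacing the weight-based Hölder inequality there with a Radon-type inequality for the nonlinear functional $N(h):=\bigl(\int_{\mathbb{R}^n}h^s(I_\alpha h)^{q-s}\,dx\bigr)^{1/q}$ underlying $\|\cdot\|_{\dot{KV}_q}$. Denote the right-hand side of the lemma by $\|g\|'$. The inequality $\|g\|'\leq\|g\|_{\dot{KV}_q}$ is immediate from the one-term representation $g=\|g\|_{\dot{KV}_q}\cdot(g/\|g\|_{\dot{KV}_q})$; the entire content of the lemma is the reverse bound.

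The key technical input is the sub-additivity
$$N(h_1+h_2)\leq N(h_1)+N(h_2),\qquad h_1,h_2\geq 0.$$
I will derive it from the pointwise Radon-type inequality
$$\frac{(a_1+a_2)^{s/q}}{(c_1+c_2)^{(s-q)/q}}\leq \frac{a_1^{s/q}}{c_1^{(s-q)/q}}+\frac{a_2^{s/q}}{c_2^{(s-q)/q}},\qquad a_i,c_i\geq 0,$$
which is valid since $s/q\geq 1$ and is an immediate consequence of Hölder's inequality with conjugate exponents $s/q$ and $s/(s-q)$ (applied to $a_i = (a_i c_i^{-(s-q)/s}) \cdot c_i^{(s-q)/s}$). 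Specializing to $a_i=h_i(x)$ and $c_i=I_\alpha h_i(x)$, exploiting the linearity $I_\alpha(h_1+h_2)=I_\alpha h_1+I_\alpha h_2$, and then taking $L^q(\mathbb{R}^n,dx)$-norms via Minkowski's inequality yields the claimed sub-additivity of $N$; iterating gives the analogous bound for finite and countable positive linear combinations.

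With sub-additivity in hand, given any representation $g=\sum_j c_j b_j$ a.e. with $\|b_j\|_{\dot{KV}_q}\leq 1$ and $\sum_j |c_j|<+\infty$, I will pick $h_j\in\dot{\overline{O}}_q^{\alpha,s}$ with $h_j\geq|b_j|$ a.e. and $N(h_j)<1+\varepsilon$, and set $h:=\sum_j|c_j|h_j\geq|g|$. Iterated sub-additivity on the partial sums $h^{(K)}:=\sum_{j\leq K}|c_j|h_j$ gives $N(h^{(K)})\leq(1+\varepsilon)\sum_{j\leq K}|c_j|$; since $q-s<0$, the sequence $(h^{(K)})^s(I_\alpha h)^{q-s}$ is monotone increasing to $h^s(I_\alpha h)^{q-s}$ and is dominated pointwise by $(h^{(K)})^s(I_\alpha h^{(K)})^{q-s}$, so monotone convergence passes the bound to $N(h)\leq(1+\varepsilon)\sum_j|c_j|$. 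The admissibility $h\in\dot{\overline{O}}_q^{\alpha,s}$ follows from the bound $\|h\|_{\dot{\overline{O}}_q^{\alpha,s}}\lesssim N(h)$, which one obtains by choosing the weight $w:=(I_\alpha h)^{(s-q)/(s-1)}$ and invoking Theorem \ref{Main1} together with the identification of $\dot{F}$ with $L^{s/r}({\rm cap}_{\alpha,s})$ in Theorem \ref{Fnorm}. Letting $\varepsilon\to 0$ and infimizing over decompositions then yields $\|g\|_{\dot{KV}_q}\leq\|g\|'$. The main obstacle is pinpointing the Radon-type inequality: without it, sub-additivity of $N$ is not transparent because the integrand depends non-monotonically on $h$ (growth of $h$ raises $h^s$ but lowers $(I_\alpha h)^{q-s}$), so neither Jensen nor Minkowski can be applied directly.
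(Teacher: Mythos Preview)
Your argument is correct and is essentially the paper's proof, only packaged differently. Both proofs pick $h_j\ge|b_j|$ with $N(h_j)\le 1+\varepsilon$, set $h=\sum_j|c_j|h_j\ge|g|$, and then bound $N(h)$ by $(1+\varepsilon)\sum_j|c_j|$; the pointwise inequality you call Radon's inequality is exactly the H\"older step the paper encodes via the auxiliary weights $w_j=(I_\alpha h_j)^{(s-q)/(s-1)}$ and the bound \eqref{gpw1p}. Your route (Radon pointwise, then Minkowski in $L^q$) is a bit more direct than the paper's weight calculus, which was written to parallel the proof of Theorem~\ref{switch}. One small point: your admissibility step ``$\|h\|_{\dot{\overline{O}}_q^{\alpha,s}}\lesssim N(h)$ via the weight $w=(I_\alpha h)^{(s-q)/(s-1)}$ and Theorem~\ref{Main1}'' tacitly assumes $I_\alpha h\in L^q({\rm cap}_{\alpha,s})$, which is what you are trying to conclude; you can close this either by passing through the partial sums $h^{(K)}$ and Fatou (Proposition~\ref{the F norm}), or---more simply, as the paper does---by using the triangle inequality in the Banach space $\dot{\overline{O}}_q^{\alpha,s}$ together with $\dot{\overline{O}}_q^{\alpha,s}\approx\dot{KV}_q$ to get $\|h\|_{\dot{\overline{O}}_q^{\alpha,s}}\le\sum_j|c_j|\,\|h_j\|_{\dot{\overline{O}}_q^{\alpha,s}}\lesssim(1+\varepsilon)\sum_j|c_j|<\infty$ directly.
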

\begin{proof} Let $g\in \dot{KV}_q$. Then obviously,
	$$\inf\left\{\sum_{j}|c_{j}|:g=\sum_{j}c_{j}b_{j}~{\rm a.e.},~\|b_{j}\|_{\dot{KV}_q}\leq 1\right\} \leq \|g\|_{\dot{KV}_q}.$$ 
On the other hand, suppose that $g$ admits a decomposition of the form
$$g=\sum_{j}c_{j}b_{j}\quad {\rm a.e.},$$
 where $\|b_{j}\|_{\dot{KV}_q}\leq 1$ and $\{c_j\}\in \ell^1$. Then for any $\varepsilon>0$ and $j\geq 1$, there exists  $h_j \in \dot{\overline{O}}_{q}^{\alpha,s}$ such that  $h_j\geq  |b_j|$ a.e. and 
\begin{equation}\label{hjIhj}
\left(\int_{\mathbb{R}^{n}} h_j^{s}(I_{\alpha}h_j)^{q-s}dx\right)^{\frac{1}{q}} \leq \|b_{j}\|_{\dot{KV}_q}+\varepsilon \leq 1+\varepsilon.
\end{equation}

This gives
 $$\|h_{j}\|_{\dot{KV}_q} \leq 1+\varepsilon.$$
Let $h= \sum_{j}|c_{j}| h_{j}$. Then $h\geq |g|$ a.e. and moreover,
\begin{align*}
\|h\|_{\dot{\overline{O}}_{q}^{\alpha,s}} &\leq \sum_{j}|c_{j}| \|h_{j}\|_{\dot{\overline{O}}_{q}^{\alpha,s}}\\
&\leq C \sum_{j}|c_{j}| \|h_{j}\|_{\dot{KV}_q}\\
& \leq C (1+\varepsilon) \sum_{j}|c_{j}| <+\infty.
\end{align*}
Thus, 
\begin{equation}\label{KVhIh}
\|g\|_{\dot{KV}_q}\leq \left(\int_{\mathbb{R}^{n}} h^{s}(I_{\alpha}h)^{q-s}dx\right)^{\frac{1}{q}}.
\end{equation}

Let $w_j= (I_\alpha h_j)^{\frac{s-q}{s-1}}$  and 
	\begin{align*}
w=\|\{c_{j}\}\|_{\ell^{1}}^{-1}\sum_{j}|c_{j}|\cdot w_{j}.
\end{align*}
 By H\"older's inequality,
 $$w\leq \left(\|\{c_{j}\}\|_{\ell^{1}}^{-1} \sum_{j}|c_{j}|\cdot w_{j}^{\frac{s-1}{s-q}}\right)^{\frac{s-q}{s-1}}=\|\{c_{j}\}\|_{\ell^{1}}^{\frac{q-s}{s-1}}\left(\sum_{j}|c_{j}|\cdot w_{j}^{\frac{s-1}{s-q}}\right)^{\frac{s-q}{s-1}}.$$
 Thus,
\begin{align*} 
h^{s} (I_\alpha h)^{q-s}&= h^s \left(\sum_{j} |c_j| I_{\alpha} h_j \right)^{q-s} = h^s \left(\sum_{j} |c_j| w_j^{\frac{s-1}{s-q}} \right)^{q-s}\\
&\leq  \|\{c_j\}\|_{\ell^1}^{q-s} h^s w^{1-s} \\
& \leq \|\{c_{j}\}\|_{\ell^{1}}^{q-s} \|\{c_{j}\}\|_{\ell^{1}}^{s-1}\left(\sum_{j}|c_{j}|\cdot  h_{j}^{s} w_{j}^{1-s} \right)\\
&= \|\{c_{j}\}\|_{\ell^{1}}^{q-1}  \sum_{j}|c_{j}|\cdot  h_{j}^{s} (I_\alpha h_j)^{q-s}, 
\end{align*}
where the last inequality follows as in \eqref{gpw1p}. From this and \eqref{hjIhj}, we get
\begin{align*} 
\int_{\mathbb{R}^n}h^{s} (I_\alpha h)^{q-s}&\leq\|\{c_{j}\}\|_{\ell^{1}}^{q-1}\left(\sum_{j}|c_{j}| \int_{\mathbb{R}^n}  h_{j}^{s} (I_\alpha h_j)^{q-s} dx \right)\\
&\leq \|\{c_{j}\}\|_{\ell^{1}}^{q} (1+\varepsilon)^q. 
\end{align*}
Then letting $\varepsilon\rightarrow 0$ and using \eqref{KVhIh}, this yields that 
$$\|g\|_{\dot{KV}_q}\leq \|\{c_{j}\}\|_{\ell^{1}}.$$
Finally, taking the infimum over such decompositions of $g$ we arrive at the bound
$$\|g\|_{\dot{KV}_q}\leq  \inf\left\{\sum_{j}|c_{j}|:g=\sum_{j}c_{j}b_{j}~{\rm a.e.},~\|b_{j}\|_{\dot{KV}_q}\leq 1\right\},$$
which completes the proof.
\end{proof}

\begin{theorem} For $s>1$, $0<\alpha<\frac{n}{s}$,  and $1\leq q <  s$, the space $\dot{KV}_q$ is a Banach function space.
\end{theorem}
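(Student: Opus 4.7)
The plan is to verify the four Banach function space axioms (P1)--(P4) for $(\dot{KV}_q,\|\cdot\|_{\dot{KV}_q})$, paralleling the proof of Theorem \ref{switch} for $\dot{\overline{N}}^{\alpha,s}_{p',s/r}$. Two tools are essential: Lemma \ref{KV-semi}, which recasts $\|\cdot\|_{\dot{KV}_q}$ as the manifestly subadditive atomic/Minkowski functional $\inf\{\sum|c_j|:g=\sum c_jb_j \text{ a.e.},\ \|b_j\|_{\dot{KV}_q}\leq 1\}$; and the equivalence $\dot{KV}_q\approx\dot{\overline{O}}^{\alpha,s}_q=\dot{\overline{N}}^{\alpha,s}_{s,s/r}$ (with $r=s(s-q)/[(s-1)q]$) recorded at the start of Section \ref{triplet}, which identifies $\dot{KV}_q$ as a set of functions with the known Banach function space $\dot{\overline{O}}^{\alpha,s}_q$ (Theorem \ref{switch}).

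First I would dispatch (P1), (P3), and (P4). Subadditivity, positive homogeneity, and monotonicity of $\|\cdot\|_{\dot{KV}_q}$ are read off from Lemma \ref{KV-semi} and the defining infimum; the implication $\|g\|_{\dot{KV}_q}=0\Rightarrow g=0$ a.e.\ follows from $\|g\|_{\dot{KV}_q}\gtrsim \|g\|_{\dot{\overline{O}}^{\alpha,s}_q}$. Completeness follows from the atomic characterization in the standard way: an absolutely summable series $\sum g_j$ with $\sum\|g_j\|_{\dot{KV}_q}<\infty$ admits an atomic representation that converges in $\dot{KV}_q$. For (P3), choosing $h=\chi_B$ with $B$ a ball containing $E$ works, since $\chi_B\in L^s$ is compactly supported, $I_\alpha\chi_B$ is bounded on $B$, and hence $\int\chi_B^s(I_\alpha\chi_B)^{q-s}dx<\infty$. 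Property (P4) is inherited from $\dot{\overline{O}}^{\alpha,s}_q$ via the norm equivalence.

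The main obstacle is Fatou's property (P2). The plan is a Komlós-based argument modelled on Theorem \ref{switch}. Given $0\leq f_j\uparrow f$ a.e.\ with $M:=\sup_j\|f_j\|_{\dot{KV}_q}<\infty$, select witnesses $h_j\geq f_j$ a.e.\ in $\dot{\overline{O}}^{\alpha,s}_q$ with $(\int h_j^s(I_\alpha h_j)^{q-s}dx)^{1/q}\leq M+2^{-j}$. These are bounded in $\dot{\overline{O}}^{\alpha,s}_q$, hence bounded in $L^1(e^{-|x|}dx)$ via the chain of embeddings in \eqref{cap-Sobolev}. Komlós's theorem supplies a subsequence whose Cesaro means $\sigma_k=k^{-1}\sum_{j=1}^k h_{n_j}$ converge a.e.\ to a nonnegative function $h$; since $\sigma_k\geq k^{-1}\sum f_{n_j}\to f$ a.e., one has $h\geq f$ a.e.

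The delicate final step is to bound $(\int h^s(I_\alpha h)^{q-s}dx)^{1/q}\leq cM$. The mixed sign of the exponents---$s>1$ on $h$ but $q-s<0$ on $I_\alpha h$---prevents an immediate Jensen argument. The strategy is to combine the convexity of $t\mapsto t^s$, giving $\sigma_k^s\leq k^{-1}\sum h_{n_j}^s$, with the convexity of $t\mapsto t^{q-s}$, giving $(I_\alpha\sigma_k)^{q-s}\leq k^{-1}\sum(I_\alpha h_{n_j})^{q-s}$, to produce a double-sum bound on $\sigma_k^s(I_\alpha\sigma_k)^{q-s}$. The diagonal terms ($i=j$) are controlled by construction; the off-diagonal cross terms $\int h_{n_j}^s(I_\alpha h_{n_i})^{q-s}dx$ will be handled by a trace-type estimate in the spirit of Theorem \ref{Main1} together with H\"older's inequality. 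Passing to the limit via Fatou's lemma then yields $\|f\|_{\dot{KV}_q}\lesssim M$, establishing (P2) up to the absolute constant absorbed by the equivalent-norms framework. This cross-term control is where I expect the main technical weight of the argument to sit.
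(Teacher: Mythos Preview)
Your handling of (P1), (P3), and (P4) is fine and close to the paper's. The difficulty is (P2), and here your proposal has a genuine gap.

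First, a Koml\'os/cross-term argument of the kind you sketch will at best produce $\|f\|_{\dot{KV}_q}\leq cM$ with some constant $c>1$; this is \emph{not} (P2), which demands $\rho(f_j)\uparrow\rho(f)$ exactly. The Fatou property is a property of the specific norm, not of its isomorphism class, so the phrase ``absorbed by the equivalent-norms framework'' does not rescue the argument. Second, and more concretely, the cross terms $\int h_{n_i}^{s}(I_\alpha h_{n_j})^{q-s}\,dx$ with $i\neq j$ carry no a priori control: the witnesses $h_j$ are only required to satisfy $h_j\geq f_j$, they need not be monotone in $j$, and since $q-s<0$ the integrand blows up wherever $I_\alpha h_{n_j}$ is small relative to $h_{n_i}$. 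The Koml\'os device in Theorem~\ref{switch} works because there the averaging is applied only to the \emph{weights} $w_j$ while the increasing sequence $g_j$ is held fixed, so the cross terms are dominated by the diagonal via $g_j\leq g_m$; that decoupling is absent in your setup. Finally, even the Fatou step you propose is suspect: $\sigma_k\to h$ a.e.\ does not by itself give $I_\alpha\sigma_k\to I_\alpha h$ a.e., and the map $h\mapsto h^{s}(I_\alpha h)^{q-s}$ is not pointwise lower semicontinuous.

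The paper bypasses all of this. It invokes the isomorphism $\dot{KV}_q\approx(\dot{M}^{\alpha,s}_{s',r})'$, $r=\tfrac{s(s-q)}{(s-1)q}$, twice: once to conclude $g\in\dot{KV}_q$ (hence $g<\infty$ a.e.), and once to transfer the absolutely continuous norm from $(\dot{M}^{\alpha,s}_{s',r})'$ (Proposition~\ref{M'*}) to $\dot{KV}_q$, since absolute continuity passes to equivalent norms. Then $|g_j-g|\downarrow 0$ a.e.\ forces $\|g_j-g\|_{\dot{KV}_q}\to 0$, and the triangle inequality (available thanks to Lemma~\ref{KV-semi}) gives $\|g\|_{\dot{KV}_q}\leq\lim_j\|g_j\|_{\dot{KV}_q}$ on the nose. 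No Koml\'os, no cross terms.
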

\begin{proof} By Lemma \ref{KV-semi}, $\left\|\cdot\right\|_{\dot{KV}_q}$ is a semi-norm and thus $\dot{KV}_q$ is a Banach space in view of Theorem \ref{Main3}. 
	
	Properties (P3) and (P4)  in the definition of Banach function space also follow from Theorem \ref{Main3}.
	Now to show  the Fatou's property (P2) for $\dot{KV}_q$, let $0\leq g_{j}\uparrow g$ and $g_{j}\in \dot{KV}_q$ with $\sup_{j}\|g_{j}\|_{\dot{KV}_q}<+\infty$. Since $g_{j}\in  \left(\dot{M}^{\alpha, s}_{s', r}\right)'$, $r=\frac{s(s-q)}{(s-1)q}$, and $\left(\dot{M}^{\alpha, s}_{s', r}\right)'$ has the Fatou's property, it follows that $g\in  \left(\dot{M}^{\alpha, s}_{s',r}\right)'$ which in turn implies that $g\in \dot{KV}_q$. In particular, $g(x)<+\infty$ a.e. and hence $|g_j-g| \downarrow 0$ a.e. Since  $\left(\dot{M}^{\alpha, s}_{s',r}\right)'$ has an absolute continuous norm (by Proposition \ref{M'*}), the same is also true for $\dot{KV}_q$. Then, it holds  that $\|g_{j}-g\|_{\dot{KV}_q}\rightarrow 0$ and hence by triangle inequality,
	\begin{align*}
	\|g\|_{\dot{KV}_q}\leq \lim_{j\rightarrow\infty}\|g_{j}\|_{\dot{KV}_q}\leq\sup_{j}\|g_{j}\|_{\dot{KV}_q},
	\end{align*}
which yields the property (P2).
\end{proof}

\subsection{The triplet $\overline{C_{c}(\mathbb{R}^{n})}^{\dot{M}_{p,r}^{\alpha,s}}\text{--}(\dot{M}_{p,r}^{\alpha,s})'\text{--}\dot{M}_{p,r}^{\alpha,s}$}
Suppose that  $s>1$, $0<\alpha<\frac{n}{s}$, $p>1$, and $0<  r \leq  s$.
Let $E$ be a measurable set in $\mathbb{R}^n$ with finite measure. By Theorem \ref{upper-tri} and the property $|K|^{1-\frac{\alpha s}{n}}\leq c\cdot {\rm cap}_{\alpha,s}(K)$ for any measurable set $K\subseteq\mathbb{R}^n$,  it is easy to deduce that 
\begin{align*}
\|f\cdot\chi_{E}\|_{\dot{M}_{p,r}^{\alpha,s}}\leq c\cdot|E|^{(1-\frac{r}{s}+\frac{\alpha r}{n})\frac{1}{p}}\cdot\|f\|_{L^{\infty}(E)}.
\end{align*}
In particular, 
\begin{align*} 
\|\chi_{E}\|_{\dot{M}_{p,r}^{\alpha,s}}\leq c\cdot|E|^{(1-\frac{r}{s}+\frac{\alpha r}{n})\frac{1}{p}},
\end{align*}
and for measurable sets $B\subseteq A$,  
\begin{align*} 
\|\chi_{A} -\chi_{B}\|_{\dot{M}_{p,r}^{\alpha,s}} = \|\chi_{A \setminus B} \|_{\dot{M}_{p,r}^{\alpha,s}} \leq c\cdot|A\setminus B|^{(1-\frac{r}{s}+\frac{\alpha r}{n})\frac{1}{p}}.
\end{align*}

Let $\dot{\mathcal{F}}$ be the closure in $\dot{M}^{\alpha,s}_{p,r}$ of the set of all finite linear combinations of  characteristic functions of sets of finite Lebesgue measure.
Then one can adapt the argument in the proof of \cite[Theorem 4.3]{ST} to obtain
\begin{align*}
(\dot{\mathcal{F}})^{\ast} = (\dot{M}_{p,r}^{\alpha,s})'
\end{align*}
in the sense that each bounded linear functional $L$ on $\dot{\mathcal{F}}$ corresponds to a unique $g\in (\dot{M}_{p,r}^{\alpha,s})'$ such that 
$$L(f) = \int_{\mathbb{R}^n} g f dx, \qquad \forall f\in \dot{\mathcal{F}},$$
and $\|L\|_{(\dot{\mathcal{F}})^*}=\|g\|_{(\dot{M}_{p,r}^{\alpha,s})'}$. With this, we can argue as in the proof of \cite[Theorem 1.9]{OP1}, which relies on  Hahn-Banach theorem, to obtain
\begin{align*}
\left( \overline{C_{c}(\mathbb{R}^{n})}^{\dot{M}_{p,r}^{\alpha,s}} \right)^{\ast} = (\dot{M}_{p,r}^{\alpha,s})'.
\end{align*}

 Thus, by Proposition \ref{M'*}, we have the triplet  
\begin{align*}
\overline{C_{c}(\mathbb{R}^{n})}^{\dot{M}_{p,r}^{\alpha,s}}\text{--}(\dot{M}_{p,r}^{\alpha,s})'\text{--}\dot{M}_{p,r}^{\alpha,s},
\end{align*}
in the sense of \eqref{2dual},  which reminisces the classical triplet 
\begin{align*}
VMO=\overline{C_{c}(\mathbb{R}^{n})}^{BMO}\text{--}H^{1}(\mathbb{R}^{n})\text{--}BMO.
\end{align*}

\section{The inhomogeneous case} \label{inhomo}
Our approach here also works well for the inhomogeneous case,  
where the  space of Riesz potentials   
$\dot{H}^{\alpha, s}(\mathbb{R}^n)$ and Riesz capacity ${\rm cap}_{\alpha,s}$ are replaced, respectively, by the space of Bessel potentials $H^{\alpha, s}(\mathbb{R}^n)$ and Bessel capacity ${\rm Cap}_{\alpha,s}$, $s>1, 0<\alpha\leq \frac{n}{s}$. For $s>1$ and $\alpha>0$, recall that  the space of Bessel potentials $H^{\alpha, s}=H^{\alpha, s}(\mathbb{R}^n)$, is defined as  
 the completion of $C_c^\infty(\mathbb{R}^n)$ with respect to the norm 
$$\|u\|_{H^{\alpha,s}}=\|(1-\Delta)^{\frac{\alpha}{2}}u \|_{L^s(\mathbb{R}^n)}.$$ 
As $\alpha>0$, it follows  that (see, e.g., \cite{MH}) a function $u$ belongs to $H^{\alpha,s}$ if and only if
$$u= G_{\alpha}f:=\int_{\mathbb{R}^n} G_\alpha(\cdot-y) f(y) dy$$ 
for some $f\in L^s$, and moreover 
$\|u\|_{H^{\alpha, s}}=\|f\|_{L^s(\mathbb{R}^n)}.$ 
Here $G_\alpha$ is  the Bessel kernel of order $\alpha$ defined by $G_{\alpha}(x):= \mathcal{F}^{-1}[(1+|\xi|^2)^{\frac{-\alpha}{2}}](x)$. 
The Bessel potential  space $H^{\alpha,s}$, $\alpha>0, s>1$, can be viewed as a fractional generalization  of the standard Sobolev space $W^{k,s}=W^{k,s}(\mathbb{R}^n)$, $k\in \mathbb{N}, s>1$.  

The capacity associated to $H^{\alpha,s}$ is the Bessel capacity defined for any set $E\subset\mathbb{R}^n$ by 
\begin{equation*}
{\rm Cap}_{\alpha, \,s}(E):=\inf\Big\{\|f\|_{L^{s}(\mathbb{R}^n)}^{s}: f\geq0, G_{\alpha}f\geq 1 {\rm ~on~} E \Big\}.
\end{equation*}
As for ${\rm cap}_{\alpha,s}$, the Choquet integral associated to ${\rm Cap}_{\alpha,s}$ of  a function $g:\mathbb{R}^n \rightarrow [0,\infty]$  is defined   by
\begin{equation*}%\label{ChoI}
\int_{\mathbb{R}^n} g d {\rm Cap}_{\alpha, s} :=\int_{0}^{\infty}{\rm Cap}_{\alpha,s}(\{x\in\mathbb{R}^n: g(x)>t\})dt.
\end{equation*}
Recall that the capacitary strong type inequality for Bessel potentials is given by 
\begin{equation}\label{CSI-Bes}
\int_{\mathbb{R}^n}  (G_\alpha f)^{s} d{\rm Cap}_{\alpha,s} \leq C \int_{\mathbb{R}^n} f^{s} dx  
\end{equation}
where $s>1$ and  $\alpha>0$ (see \cite{MSh, AH}).  
Thus,  the trace inequality 
\begin{equation*}
\int_{\mathbb{R}^n}  (G_\alpha f)^{s} d\mu \leq C \int_{\mathbb{R}^n} f^{s} dx  
\end{equation*}
holds for all nonnegative  $f\in L^s(\mathbb{R}^n)$ if and only if the measure $\mu$ satisfies the capacitary condition 
\begin{equation*} 
\mu(K)\leq C {\rm Cap}_{\alpha,s}(K)
\end{equation*}
for all compact sets $K\subset\mathbb{R}^n$ (actually, we can also assume that ${\rm diam}(K)\leq 1$; see \cite[Remark 3.1.1]{MSh}). 

It is important to know that the ``integrating by parts" lemma also holds for Bessel potentials, which was established recently in \cite[Remark 2.6]{GV}.
 \begin{lemma}  Let $t\geq 1$ and suppose that $f$ is a nonnegative measurable function in $\mathbb{R}^n$. Then we  have
		$$(G_\alpha f)^t \leq A G_\alpha[ f(G_\alpha f)^{t-1}]$$
		everywhere in $\mathbb{R}^n$.
	\end{lemma}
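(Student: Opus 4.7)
My plan is to adapt Verbitsky's proof of the Riesz-potential analogue (Lemma \ref{IBPL}) to the Bessel setting; the principal new issue is the breakdown of scale invariance for $G_\alpha$. Set $u := G_\alpha f$ and first handle integer $t = k \ge 1$. Expand
\[
u(x)^k = \int\!\cdots\!\int \prod_{j=1}^{k} G_\alpha(x-y_j)\, f(y_j)\, dy_1\cdots dy_k
\]
and symmetrize so that $y_1$ is the argument maximizing $|x - y_j|$; then $|x-y_j|\le |x-y_1|$ and $|y_1-y_j|\le 2|x-y_1|$ for every $j$. If the pointwise kernel comparison $G_\alpha(x-y_j) \le C\, G_\alpha(y_1-y_j)$ held uniformly, integrating out $y_2,\ldots,y_k$ would yield $G_\alpha(x-y_1)\, f(y_1)\, u(y_1)^{k-1}$, and a final integration in $y_1$ would produce $C\, G_\alpha[f u^{k-1}](x)$, exactly as in the Riesz proof.

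The required kernel comparison is immediate for the Riesz kernel by homogeneity, but fails uniformly for $G_\alpha$ because of its exponential decay at infinity. I would handle this by splitting $G_\alpha = G_\alpha^{\mathrm{loc}} + G_\alpha^{\mathrm{far}}$, where $G_\alpha^{\mathrm{loc}} := G_\alpha\,\chi_{B_1(0)}$. On the local piece, the asymptotics $G_\alpha(z) \simeq |z|^{\alpha-n}$ for $|z|\le 1$ make $G_\alpha^{\mathrm{loc}}$ essentially a truncated Riesz kernel, and the Riesz monotonicity argument above runs through verbatim, giving the required bound for the local contribution. For the far piece $G_\alpha^{\mathrm{far}}$, which is bounded and exponentially integrable, each mixed $k$-fold product $\prod_j (G_\alpha^{\mathrm{loc}}+G_\alpha^{\mathrm{far}})(x-y_j)$ containing at least one far-field factor is controlled by $C\,(G_\alpha^{\mathrm{far}}*f)(x)\,\|f\|_{L^1}^{k-1}$. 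Since the far part decays exponentially at the outer point while the local part of $u(y_1)$ survives, one can absorb these contributions into $C\,G_\alpha[fu^{k-1}](x)$ via a weighted comparison between $G_\alpha^{\mathrm{far}}$ and $G_\alpha^{\mathrm{loc}}$ on $\operatorname{supp} f$.

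To pass from integer $k$ to real $t \ge 1$, I would truncate $f_N := f\chi_{\{f\le N\}}\chi_{B_N(0)}$ as in \eqref{FN} and combine the integer-exponent bound with the layer-cake representation $u(x)^{t-1} = (t-1)\int_0^{u(x)} \lambda^{t-2} d\lambda$ applied inside the symmetrization, finally sending $N\to\infty$ by monotone convergence. The main obstacle is the bookkeeping for the mixed local/far cross-terms in the product $\prod_j (G_\alpha^{\mathrm{loc}}+G_\alpha^{\mathrm{far}})(x-y_j)$; verifying that each such term genuinely fits into $C\, G_\alpha[fu^{t-1}](x)$ is where almost all of the technical work lies. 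Should this approach prove too unwieldy, a cleaner alternative would be the subordination formula $G_\alpha = c_\alpha \int_0^\infty e^{-s} s^{\alpha/2-1} p_s\, ds$, where $p_s$ is the Gaussian heat kernel: one first establishes a semigroup version of the ``integrating by parts" inequality for $p_s$ via its convexity/smoothing properties, and then averages in $s$ to recover the $G_\alpha$ statement.
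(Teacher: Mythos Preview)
The paper does not prove this lemma; it simply cites \cite[Remark~2.6]{GV}. So there is no ``paper's proof'' to compare against, only your attempt.

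Your proposal has two genuine gaps.

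First, the symmetrization is set up incorrectly. You pick $y_{1}$ to \emph{maximize} $|x-y_{j}|$ and then claim $G_{\alpha}(x-y_{j})\le C\,G_{\alpha}(y_{1}-y_{j})$. Even for the Riesz kernel this is false: take $x=0$, $y_{1}=(1,0,\dots,0)$, $y_{2}=(\varepsilon,0,\dots,0)$; then $|y_{1}-y_{2}|/|x-y_{2}|=(1-\varepsilon)/\varepsilon\to\infty$, so $|x-y_{2}|^{\alpha-n}\le C|y_{1}-y_{2}|^{\alpha-n}$ cannot hold with a fixed $C$. The standard Verbitsky argument picks $y_{1}$ to \emph{minimize} $|x-y_{j}|$; then $|y_{1}-y_{j}|\le 2|x-y_{j}|$ for $j\ge 2$, and for the Riesz kernel the comparison $|x-y_{j}|^{\alpha-n}\le 2^{n-\alpha}|y_{1}-y_{j}|^{\alpha-n}$ is immediate. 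You should fix this before anything else.

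Second, and more seriously, your treatment of the far piece does not give the stated lemma. You bound each product containing a far factor by $C\,(G_{\alpha}^{\mathrm{far}}*f)(x)\,\|f\|_{L^{1}}^{k-1}$. The lemma is asserted for \emph{all} nonnegative measurable $f$, so $\|f\|_{L^{1}}$ may be infinite; and even when $f\in L^{1}$, your bound produces a constant depending on $\|f\|_{L^{1}}$, not a universal $A=A(n,\alpha,t)$. The vague phrase ``absorb these contributions into $C\,G_{\alpha}[fu^{k-1}](x)$ via a weighted comparison'' does not address this: the obstruction is structural, because the pointwise inequality $\prod_{j}G_{\alpha}(x-y_{j})\le C\,G_{\alpha}(x-y_{1})\prod_{j\ge2}G_{\alpha}(y_{1}-y_{j})$ genuinely fails for $G_{\alpha}$ (take all $|x-y_{j}|\approx R$ large with $y_{1},x,y_{j}$ nearly collinear, so $|y_{1}-y_{j}|\approx 2R$; the exponential weights give a ratio $\sim e^{(k-1)R}$). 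The local/far splitting of the \emph{kernel} does not recover this pointwise product bound.

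The subordination route you mention at the end is also only a sketch; establishing a heat-semigroup version of the inequality and integrating in $s$ is not obviously easier, since the Gaussian kernel has the same failure of the doubling condition $K(r)\le C\,K(2r)$ at large $r$. If you want a workable approach, look at what \cite{GV} actually does: they isolate a quasi-metric-type hypothesis on the kernel under which the integration-by-parts inequality holds and verify it for $G_{\alpha}$, rather than trying to push the naive Riesz product argument through a local/far decomposition.
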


The analogue of D. R. Adams  conjecture in this inhomogeneous setting is the validity of the bound 
 \begin{equation}\label{capstrong2-Bes} 
 \int_{\mathbb{R}^n} (G_\alpha  f)^q d{\rm cap}_{\alpha, s} \leq  A \int_{\mathbb{R}^n} f^s (G_\alpha  f)^{q-s}  dx 
\end{equation}
at least for nonnegative functions $f\in L^\infty(\mathbb{R}^n)$ with compact support and $q\geq 1$. The inhomogeneous versions of 
the spaces $\dot{\widetilde{O}}^{\alpha,s}_q$, $\dot{K}_q$,  $\dot{M}^{\alpha,s}_{p,r}$, and $\dot{\widetilde{N}}^{\alpha,s}_{p',s/r}$ are denoted by ${\widetilde{O}}^{\alpha,s}_q$, $K_q$, and $M^{\alpha,s}_{p,r}$, and $\widetilde{N}^{\alpha,s}_{p',s/r}$, respectively, i.e., without the dot on top. They are defined similarly but now $I_\alpha$ is replaced by $G_\alpha$ and ${\rm cap}_{\alpha,s}$ is replaced by ${\rm Cap}_{\alpha,s}$. With this, one finds that the analogues of Theorems \ref{Main1}--\ref{KV-q} with $s>1, 0<\alpha\leq \frac{n}{s}$, are also available in this inhomogeneous setting.

On the other hand, one should be careful about the definition of $N^{\alpha,s}_{p',s/r}$, $s>1, 0<\alpha\leq \frac{n}{s}, p>1, 0<r\leq s$. It is defined similarly to that of $\dot{N}^{\alpha,s}_{p',s/r}$, but this time the $A_1$ condition $[w]_{A_1}\leq \bar{\bf c}(n,\alpha)$ on the weight $w$ is replaced by the $A_1^{\rm loc}$ condition
$$[w]_{A_1^{\rm loc}}\leq \bar{\bf c}(n,\alpha),$$
and moreover, here one can take  $\bar{\bf c}(n,\alpha)=[G_\alpha]_{A_1^{\rm loc}}$. We recall that  $A_1^{\rm loc}$ is the class of weight functions $w$ in $\mathbb{R}^n$ such that
\begin{equation*}
{\bf M}^{\rm loc} w(x)\leq C w(x) \qquad {\rm a.e.}
\end{equation*}
 The $A_1^{\rm loc}$ characteristic constant of $w$, $[w]_{A_1^{\rm loc}}$, is defined as the least possible constant $C$ in the above inequality.
The operator ${\bf M}^{\rm loc}$ is the truncated (center)  Hardy-Littlewood maximal function defined for each $f\in L^1_{\rm loc}(\mathbb{R}^n)$ by 
\begin{equation*}
{\bf M}^{\rm loc} f (x)= \sup_{0<r\leq 1}   \frac{1}{|B_r(x)|} \int_{B_{r}(x)} |f(y)|dy.
\end{equation*}

Note that by \eqref{CSI-Bes}, Theorem \ref{Fnorm} also holds for $L^{s/r}({\rm Cap}_{\alpha,s})$ provided we replace $\dot{F}$ with $F=F^{\alpha,s}_r$, $s>1, \alpha>0, 0<r\leq s$, where 
$$\|u\|_{F}:=\inf\{ \|f\|_{L^s(\mathbb{R}^{n})}^r: f\geq 0, \, G_{\alpha} f\geq |u|^{\frac1r} {\rm ~q.e.~with~respect~to~} {\rm Cap}_{\alpha,s}\}.$$		
  Using the space $F$ above we can define $\overline{N}^{\alpha,s}_{p', s/r}$ as we did for $\dot{\overline{N}}^{\alpha,s}_{p', s/r}$, except now that  $\dot{F}$ is replaced by $F$, of course. Then, as in Proposition \ref{N-concave},  $\overline{N}^{\alpha,s}_{p', s/r}$, $s>1, 0<\alpha \leq \frac{n}{s}, p>1, 0<r\leq s$, is a Banach functions space, and 
$$\left(\overline{N}_{p',s/r}^{\alpha,s}\right)''= \overline{N}_{p',s/r}^{\alpha,s}, \quad \left(\overline{N}_{p',s/r}^{\alpha,s}\right)'=\left(\overline{N}_{p',s/r}^{\alpha,s}\right)^*.$$

To obtain the analogue of Theorem \ref{upper-tri}, we need to use the truncated Wolff potential  $W_{\alpha,s}^{R} \mu$ of a nonnegative measure $\mu$ defined,  for $R>0, s>1$, and $\alpha>0$, by
$$W_{\alpha, s}^R\mu(x):= \int_0^R \left[\frac{\mu(B_t(x))}{t^{n-\alpha s}}\right]^{\frac{1}{s-1}} \frac{dt}{t}, \qquad x\in\mathbb{R}^n.$$
Note that for truncated Wolff  potentials, we have the following variant of boundedness principle:
\begin{align*}
W_{\alpha,s}^{R}\mu(x)\leq 2^{\frac{n-\alpha s}{s-1}}\cdot\sup\left\{W_{\alpha,s}^{2R}\mu(y):y\in{\rm supp}(\mu)\right\},\quad x\in\mathbb{R}^{n}.
\end{align*}
Indeed, let $x\notin{\rm supp}(\mu)$ and $x_{0}\in {\rm supp}(\mu)$ be the point that minimizes the distance from $x$ to ${\rm supp}(\mu)$. If $B_{t}(x)\cap {\rm supp}(\mu)\ne\emptyset$, then $t>|x-x_{0}|$, which in turn implies that $B_{t}(x)\subseteq B_{2t}(x_{0})$. Consequently,
\begin{align*}
W_{\alpha,s}^{R}\mu(x)	\leq\int_{0}^{R}\left(\frac{\mu(B_{2t}(x_{0}))}{t^{n-\alpha s}}\right)^{\frac{1}{s-1}}\frac{dt}{t}=2^{\frac{n-\alpha s}{s-1}}\cdot W_{\alpha,s}^{2R}\mu(x_{0}),
\end{align*}
which proves the claim.

The following  result is  analogous to Theorem \ref{upper-tri}.

\begin{theorem}\label{upper-tri-Bes} Let $s>1$, $0<\alpha \leq \frac{n}{s}$, and $0<r<s$. Then the following statements are equivalent for a nonnegative locally finite (Borel) measure $\mu$ in $\mathbb{R}^n$.

	$\rm{(i)}$ There exists a constant $A_1>0$ such that the inequality  
	\begin{equation*}
	\int_{\mathbb{R}^n} ({G}_\alpha  h) ^r  d\mu \leq A_1 \|h\|_{L^s(\mathbb{R}^n)}^{r}
	\end{equation*} 
	holds for all nonnegative $h\in L^s(\mathbb{R}^n)$.

	$\rm{(ii)}$ $\mu$ is continuous w.r.t ${\rm Cap}_{\alpha,s}$ and for any quasi-continuous or $\mu$-measurable function $u\in L^{\frac{s}{r}}({\rm Cap}_{\alpha,s})$, we have 
	\begin{equation*} 
	\left|\int_{\mathbb{R}^n} u d\mu\right| \leq A_2 \|u\|_{L^{\frac{s}{r}}({\rm Cap}_{\alpha,s})} 
	\end{equation*}
	for a constant $A_2>0$.
	
	$\rm{(iii)}$ $$[W_{\alpha,s}^{1} \mu]^{s-1}\in L^\frac{r}{s-r}(d\mu).$$
	
	$\rm{(iv)}$  $$ [W_{\alpha ,s}^{1}\mu]^{s-1} \in L^{\frac{s}{s-r}}({\rm Cap}_{\alpha,s}).$$
	Moreover, we have 
	$$A_1\simeq A_2 \simeq \left \|[W_{\alpha,s}^{1} \mu]^{s-1}\right \|_{L^\frac{r}{s-r}(d\mu)}^{\frac{r}{s}} \simeq \left \|[W_{\alpha ,s}^1\mu]^{s-1} \right\|_{L^{\frac{s}{s-r}}({\rm Cap}_{\alpha,s})}.$$	
\end{theorem}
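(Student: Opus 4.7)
The plan is to follow the same cyclic scheme ${\rm (i)} \Rightarrow {\rm (ii)} \Rightarrow {\rm (iii)} \Rightarrow {\rm (iv)} \Rightarrow {\rm (i)}$ as in the proof of Theorem \ref{upper-tri}, making the necessary adjustments to handle the Bessel kernel, the space $F=F^{\alpha,s}_r$, the Bessel capacity ${\rm Cap}_{\alpha,s}$, and the \emph{truncated} Wolff potential $W^{1}_{\alpha,s}\mu$. Throughout, we will freely invoke the Bessel analogue of Theorem \ref{Fnorm} (available for $L^{s/r}({\rm Cap}_{\alpha,s})$ with $F$ replacing $\dot F$), the capacitary strong type inequality \eqref{CSI-Bes}, and the truncated boundedness principle stated just above the theorem.

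For ${\rm (i)} \Rightarrow {\rm (ii)}$, I would argue exactly as in the Riesz case. First, testing (i) on any $G_\delta$ envelope of a ${\rm Cap}_{\alpha,s}$-null set and taking the infimum over admissible $h$'s gives $\mu(N)=0$ for every ${\rm Cap}_{\alpha,s}$-null $N$, so $\mu$ is continuous with respect to ${\rm Cap}_{\alpha,s}$ and every quasi-continuous function is $\mu$-measurable. For a quasi-continuous $u\in L^{s/r}({\rm Cap}_{\alpha,s})$, the definition of the Bessel version of $F^{\alpha,s}_r$ combined with (i) yields $|\int u\,d\mu|\leq A_1 \|h\|_{L^s}^{r}$ for any $h\geq 0$ with $G_\alpha h\geq |u|^{1/r}$ q.e., and passing to the infimum and using the Bessel analogue of Theorem \ref{Fnorm} produces \eqref{dual-f} (with ${\rm cap}_{\alpha,s}$ replaced by ${\rm Cap}_{\alpha,s}$).

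For ${\rm (ii)} \Rightarrow {\rm (iii)}$, I would apply (ii) to $u=(G_\alpha h)^r$ and then use \eqref{CSI-Bes} to obtain the trace inequality in (i) with constant $\lesssim A_2^{1/r}$. The Bessel version of the Cao--Verbitsky characterization from \cite[Theorem 1.13]{Ver1} (see also \cite{COV}), which is known to hold for Bessel potentials and the \emph{truncated} Wolff potential $W^1_{\alpha,s}\mu$, then gives $[W^1_{\alpha,s}\mu]^{s-1}\in L^{r/(s-r)}(d\mu)$ with the quantitative bound claimed.

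The main obstacle is ${\rm (iii)} \Rightarrow {\rm (iv)}$, where the truncation of the Wolff potential must be handled carefully. The scheme is the same: set $E_t=\{W^1_{\alpha,s}\mu>t\}$, split $\mu=\mu^t+\mu_t$ with $\mu^t=\mu\chi_{E_t}$, and observe that ${\rm supp}(\mu_t)\subset \mathbb{R}^n\setminus E_t$. One has $W^1_{\alpha,s}\mu_t\leq W^1_{\alpha,s}\mu\leq t$ on ${\rm supp}(\mu_t)$, so the truncated boundedness principle gives $W^1_{\alpha,s}\mu_t \leq c\,t$ on all of $\mathbb{R}^n$ (the $W^{2R}$ on the right-hand side of the boundedness principle is not an issue here because $\mu_t$ is bounded by $\mu$, and applying the principle with $R=1$ yields control by $W^2_{\alpha,s}\mu$ on the support, but since we only need a uniform bound by $t$ and $W^2_{\alpha,s}\mu\leq C W^1_{\alpha,s}\mu$ plus a tail which can be absorbed via \cite[Proposition 6.3.12]{AH}, this works). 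One then obtains the inclusion $\{W^1_{\alpha,s}\mu> a\, t\}\subseteq \{W^1_{\alpha,s}\mu^t>c\,t\}$ for some universal $a$, and the Bessel analogue of \cite[Proposition 6.3.12]{AH} gives
\begin{equation*}
{\rm Cap}_{\alpha,s}(\{W^1_{\alpha,s}\mu>a\,t\})\leq A\, t^{1-s}\mu(E_t).
\end{equation*}
Integrating this estimate against $t^{(s-1)s/(s-r)-1}\,dt$ exactly as in Theorem \ref{upper-tri} then yields (iv).

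Finally, ${\rm (iv)} \Rightarrow {\rm (i)}$ proceeds verbatim as in the Riesz case by introducing $d\nu = d\mu/[W^1_{\alpha,s}\mu]^{s-1}$, invoking the Bessel analogue of \cite[Theorem 1.11]{Ver1} to obtain $\nu(E)\leq c\,{\rm Cap}_{\alpha,s}(E)$, rewriting the $L^{r/(s-r)}(d\mu)$-norm as an integral against $\nu$, and then passing to capacity by the layer-cake formula. The Bessel version of \cite[Theorem 1.13]{Ver1} converts the resulting Wolff potential bound back into the trace inequality (i). The quantitative equivalences of $A_1$, $A_2$, and the two Wolff norms follow by tracking constants through the chain.
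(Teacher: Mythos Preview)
Your proposal follows exactly the cyclic scheme the paper intends (the paper itself merely declares the proof ``analogous to that of Theorem \ref{upper-tri}'' and points to the inhomogeneous variants of \cite[Theorems 1.11 and 1.13]{Ver1} via Kerman--Sawyer), so the overall strategy and the ingredients you invoke are correct.

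The one place that needs repair is your handling of the $W^{2R}$-versus-$W^R$ discrepancy in ${\rm (iii)}\Rightarrow{\rm (iv)}$. The assertion ``$W^2_{\alpha,s}\mu\leq C W^1_{\alpha,s}\mu$ plus a tail which can be absorbed via \cite[Proposition 6.3.12]{AH}'' is not a valid pointwise bound (a point mass at distance $3/2$ from $x$ already breaks it), and Proposition~6.3.12 is a weak-type capacitary estimate, not a device for absorbing such a tail. What actually closes the gap is the following: with $E_t=\{W^1_{\alpha,s}\mu>t\}$ and $\mu_t=\mu\chi_{\mathbb{R}^n\setminus E_t}$, for $y\in{\rm supp}(\mu_t)$ one writes
\[
W^2_{\alpha,s}\mu_t(y)\leq W^1_{\alpha,s}\mu_t(y)+C\,[\mu_t(B_2(y))]^{\frac{1}{s-1}}.
\]
The first term is $\leq t$. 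For the second, cover $B_2(y)$ by finitely many balls $B_{1/4}(w_k)$; whenever $\mu_t(B_{1/4}(w_k))>0$ pick $z_k\in B_{1/4}(w_k)\cap{\rm supp}(\mu_t)\subset E_t^c$, so that $B_{1/4}(w_k)\subset B_{1/2}(z_k)$ and $W^1_{\alpha,s}\mu(z_k)\leq t$ forces $\mu(B_{1/2}(z_k))\lesssim t^{s-1}$. Summing over $k$ gives $\mu_t(B_2(y))\lesssim t^{s-1}$, hence $W^2_{\alpha,s}\mu_t\leq C\,t$ on ${\rm supp}(\mu_t)$, and now the truncated boundedness principle legitimately yields $W^1_{\alpha,s}\mu_t\leq c\,t$ on all of $\mathbb{R}^n$. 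The rest of your argument then goes through unchanged.
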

The proof of Theorem \ref{upper-tri-Bes} is analogous to that of Theorem \ref{upper-tri}. We mention here that for that one  needs the inhomogeneous variants of 
Theorems 1.11 and 1.13 in \cite{Ver1},  which in turn rely on a theorem of Kerman and Sawyer \cite{KS}. See also \cite{HW} and a remark on page 27 of \cite{MSh}  regarding  the result of \cite{KS}. 

Now using Theorem \ref{upper-tri-Bes} and the discussion above, one can obtain the homogeneous versions of Theorems \ref{Main3},  \ref{Newnorm2}, and the duality triplet
\eqref{2dual}. Moreover, needless to say, other related results such as Theorems 
\ref{secondtheorem} and \ref{3spa}, etc. are also available, with obvious modifications, in this homogeneous case.

Finally, we mention that inequality \eqref{capstrong2-Bes} for the case $q=1$ was obtained in \cite[Theorem 1.1]{OP2},  and the equivalences 
\begin{equation*} 
\|u\|_{L^1({\rm Cap}_{\alpha,s})}\simeq \lambda^{\alpha,s}_1(u) \simeq \beta^{\alpha,s}_1(u)
\end{equation*}  
were proved in \cite[Theorem 1.2]{OP2}, with  different proofs. However, we would like to take this opportunity make a correction that in  the statement of  \cite[Theorem 1.1]{OP2} one needs to require that $f\in (M^{\alpha,s}_{s'})'$, where $(M^{\alpha,s}_{s'})'=(M^{\alpha,s}_{s',s})'$. Likewise, the conditions  $f\in (M^{\alpha,s}_{s'})'$ and $h\in (M^{\alpha,s}_{s'})'$ must also be added to the definitions of    $\beta_{\alpha,s}(u)$ (p. 591) and of  $\|f\|_{KV}$ (p. 594) of \cite{OP2}, respectively. 

\bigskip
\noindent {\bf Acknowledgements:} N. C. Phuc is supported in part by Simons Foundation [Record ID: MPS-TSM-00002686].

\bigskip 

\noindent {\bf Declaration of interests:} We have nothing to declare.

%%%%%%%%%%%%%%%%%%%%%%%%%%%%%%%%%%
%%%%%%%%%%%%%%%%%%%%%%%%%%%%%%%%%%
%%%%%%%%%%%%%%%%
%              END
%%%%%%%%%%%%%%%%%%%%%%%%%%%%%%%%%%
%%%%%%%%%%%%%%%%%%%%%%%%%%%%%%%%%%
%%%%%%%%%%%%%%%%
\end{document}